\documentclass[11pt]{article}
\textwidth=15cm
\setlength{\textheight}{21.5cm}
\setlength{\oddsidemargin}{.5cm}
\setlength{\evensidemargin}{-.5cm}
\setlength{\topmargin}{-.5cm}
\setlength{\abovedisplayskip}{3mm}
\setlength{\belowdisplayskip}{3mm}
\setlength{\abovedisplayshortskip}{3mm}
\setlength{\belowdisplayshortskip}{3mm}
\raggedbottom
\parskip=1.5mm

\usepackage{mathtools}
\mathtoolsset{showonlyrefs}
\usepackage{amsmath}
\usepackage{amsthm}
\usepackage{amsfonts}
\usepackage{graphicx}
\usepackage{latexsym}
\usepackage{url}
\usepackage{bbm}
\usepackage{comment}

\newcommand{\R}{\mathbb{R}}

\newcommand{\E}{\mathbb{E}}
\newcommand{\PP}{\mathbb{P}} 
\newcommand{\N}{\mathbb{N}}
\newcommand{\Z}{\mathbb{Z}}

\newcommand{\C}{\mathbb C}
\newcommand{\1}{\mathbf{1}}
\renewcommand{\P}{\mathbb P}
\newcommand{\XX}{\mathbb X}

\newcommand{\cov}{\textrm{cov}} 
\newcommand{\var}{\textrm{var}}

\newcommand{\cX}{\mathcal X}

\def\ni{\noindent}

\newcommand{\schw}{\stackrel{d}{\longrightarrow}}

\newcommand{\bee}{\begin{equation}}
\newcommand{\eee}{\end{equation}}
\newcommand{\beea}{\begin{array}}
\newcommand{\eeea}{\end{array}}

\renewcommand{\theequation}{\arabic{section}.\arabic{equation}}

\theoremstyle{plain}
\newtheorem{prop}{Proposition}[section]
\newtheorem{cor}[prop]{Corollary}
\newtheorem{theo}[prop]{Theorem}
\newtheorem{lem}[prop]{Lemma}

\theoremstyle{definition}

\newtheorem{rem}[prop]{Remark}


\begin{document}

\title{A Berry--Esse\'en theorem for partial sums \\ of functionals  of heavy-tailed 
 moving averages}

\author{Andreas Basse-O'Connor\thanks{Department
of Mathematics, Aarhus University, Denmark,
E-mail: basse@math.au.dk}\;, \and 
Mark Podolskij\thanks{Department
of Mathematics, Aarhus University, Denmark, E-mail: mpodolskij@math.au.dk}\;, \and
Christoph Th\"ale\thanks{Faculty of Mathematics, Ruhr University Bochum,
Germany, E-mail: christoph.thaele@rub.de}}

\date{}

\maketitle

\begin{abstract}
In this paper we obtain Berry--Esse\'en bounds on partial sums of functionals 
of heavy-tailed moving averages, including the linear fractional stable noise,  stable fractional ARIMA processes and 
stable Ornstein--Uhlenbeck processes. Our rates are obtained  for   the Wasserstein and Kolmogorov distances, and depend
strongly on the interplay between the memory of the process, which is controlled by a parameter $\alpha$,  and its   tail-index, which  is controlled by a parameter $\beta$. In fact, we obtain 
 the classical $1/\sqrt{n}$ rate of convergence when the tails are not too heavy and the memory is not too strong, 
 more precisely, when  $\alpha\beta >3$ or $\alpha\beta>4$  in the case of Wasserstein and Kolmogorov distance, respectively. 

 Our quantitative  bounds rely on a new second-order Poincar\'e inequality on the Poisson space, which  we derive through a combination of Stein's method and  Malliavin calculus. This  inequality  improves and generalizes a result by  Last, Peccati, Schulte [\emph{Probab.\ Theory Relat.\ Fields}~\textbf{165} (2016)].

\ \

\noindent
{\it Keywords}: Central limit theorem; linear fractional stable noise; infinitely divisible processes; Malliavin calculus; moving averages; normal approximation; Poisson random measures; Stein's method
\bigskip

\noindent
{\it AMS 2010 subject classification:} 60E07, 60F05, 60G52, 60G57

\end{abstract}

\section{Introduction}\label{sec1}
\setcounter{equation}{0}
\renewcommand{\theequation}{\thesection.\arabic{equation}}

\subsection{Overview}

The main goal of this  paper is to characterize the convergence rates associated with asymptotic normality of a class of statistics of L\'evy moving averages.  For processes with  finite  fourth moments, Theorem~8.2 in  \cite{LPS} obtains rates for a class of specific examples. Its proof relies  on  second-order  Poincar\'e inequalities on the Poisson space \cite[Theorem~1.1--1.2]{LPS}, which in turn are based on the celebrated Malliavin-Stein method.  The main novelty of our results and methodology is the study of convergence rates for processes having heavy tails and strong memory, as e.g.\ the linear fractional stable noise or fractional stable ARIMA processes.  In fact, in our setting the upper bounds in the  second-order  Poincar\'e inequalities   obtained in \cite{LPS} may converge to infinity after the application of our standard estimate \eqref{Dest} to them.   As a consequence, we 
develop  a new modified second-order Poincar\'e inequality on the Poisson space, which allows us to efficiently bound Wasserstein and Kolmogorov distances associated with normal approximation of a class of statistics of L\'evy moving averages. The improved bounds are important in their own right as they may prove to be useful in other contexts, where the considered stochastic process exhibits heavy tails and strong memory.

\subsection{Background}

The Berry--Esse\'en theorem gives a quantitative bound for the convergence rate in the classical central limit theorem. To be more precise, let  $(X_i)_{i\in \N}$ be a sequence of independent and identically distributed (i.i.d.)
random variables with mean zero, variance one and finite third moment, and set $V_n=n^{-1/2} \sum_{i=1}^n X_i$ and $Z\sim \mathcal N(0,1)$. Then, the central limit theorem says that $V_n\schw Z$ as $n\to \infty$, where $\schw$ denotes convergence in distribution.  A  natural next question is to ask 
for  quantitative bounds  between $V_n$ and $Z$, that is, how far is $V_n$ from $Z$ in a certain sense. An answer to this question is provided by the   Berry--Esse\'en theorem, which states that  
\begin{equation}\label{sdfljsdghs}
  d_K(V_n, Z):=\sup_{x\in \R} \big|\P(V_n\leq x)-\P(Z\leq x)\big|\leq C n^{-1/2}, 
\end{equation}
where $d_K$ denotes the Kolmogorov metric between two random variables and where $C$ is a constant depending on the third moment of the underlying random variables.  The Berry--Esse\'en bound
\eqref{sdfljsdghs} is optimal in the sense that there exist random variables as above such that $d_K(V_n,Z)$ is bounded from below by a constant times $n^{-1/2}$, see e.g.\ \cite[(5.26.2)]{Hoff}. 

The situation when the summands $(X_i)_{i\in \N}$ 
are dependent is much  more complicated, compared to the classical i.i.d.\ setting.  One of the most important models in this situation is the fractional Gaussian noise, which we will describe in the following. 
  For $H\in (0,1)$, the fractional Brownian motion is the unique centered Gaussian process $(Y_t)_{t\in\R}$ 
with covariance function 
\begin{equation}
 \textrm{cov}(Y_t,Y_u) = \frac{1}{2} \Big( |t|^{2H}+|u|^{2H}-|t-u|^{2H}\Big),\qquad \text{for all }t,u\in \R.  
\end{equation}
The fractional Gaussian noise $(X_n)_{n\in \Z}$ is the corresponding increment process $X_n=Y_{n}-Y_{n-1}$. 
Let 
\begin{equation}
  V_n=\frac{1}{\sqrt{n}} \sum_{j=1}^n \big( X_j^2-1\big) \qquad \text{and}\qquad v_n=\sqrt{\textrm{var}(V_n)}.
\end{equation}  
For $H<3/4$,  we have that $v_{n}^{-1} V_n \schw Z$ as $n\to \infty$. 
The first Berry--Esse\'en bound for the fractional Gaussian noise was obtained in Theorem~4.1 of  \cite{NouPec} and reads as 
\begin{equation}\label{sdlfjsdlfj-ueytegeg}
  d_K\Big(v_n^{-1}V_n,Z\Big)\leq C 
  \begin{cases}
  	n^{-1/2} \qquad & \text{if }H\in (0,1/2], \smallskip\\
  	n^{2H-3/2} & \text{if } H\in (1/2,3/4).
  \end{cases}
\end{equation}
In \eqref{sdlfjsdlfj-ueytegeg} we observe the phenomenon that for strong memory in $X$  (i.e.\ $H\in (1/2,3/4)$),
 we get a slower  rate of convergence.  Furthermore,  when    $H>3/4$, the memory in  $X$ 
is so strong that $V_n$ after proper normalization  converge to the Rosenblatt random variable in distribution, and hence 
has  a non-Gaussian fluctuation,  see e.g.\ Theorem~7.4.1 of \cite{Nou-Pec-book}. 

\subsection{Heavy-tailed  moving averages}

Let us now describe our results in more detail. We consider a two-sided L\'evy process $L=(L_t)_{t\in \R}$ with no Gaussian component, $L_0=0$ a.s.\ and L\'evy measure $\nu$, that is, 
for all $\theta\in \R$, the characteristic function of $L_1$ is given by 
\begin{equation}\label{sfldjsdf}
	\E[ e^{i \theta L_1}]=\exp\Big( \int_\R \big( e^{i\theta x } -1-i\theta x \chi(x)\big) \,\nu(dx)+i b \theta\Big), 
\end{equation}
where $b\in \R$, and $\chi$ is a truncation function, i.e.\  a bounded measurable
function such that $\chi(x) = 1+o(|x|)$ as $x \to  0$ and $\chi(x) = O(|x|^{-1})$ as $|x|\to \infty$.   We assume that the L\'evy measure $\nu$ has a density $\kappa$ satisfying 
\begin{equation}\label{sdfljsd}
	\kappa(x) \leq C  |x|^{-1-\beta} \qquad \text{for all }x\in \R\setminus\{0\}, 
\end{equation}
for  $\beta\in (0,2)$ and a  constant $C>0$.   
We consider a L\'evy moving average of the form
\begin{align} \label{X}
X_t= \int_{-\infty}^t g(t-s)\, dL_s,\qquad t\in\R,
\end{align}
where $g\!:\R\to \R$ is a measurable function such that the integral exists, see \cite{RajRos} for sufficient conditions. L\'evy moving averages are stationary infinitely divisible processes, and are often used to model long-range dependence and heavy tails. When the L\'evy process $L$ is symmetric, i.e.\ when $-L_1$ equals $L_1$ in distribution,  a sufficient condition 
for $X$ to be well-defined is that $\int_\R |g(s)|^\beta\,ds<\infty$, 
due to assumption \eqref{sdfljsd}.  Throughout the paper we will assume that the kernel function $g$ satisfies   
\begin{align} \label{gcond}
|g(x)| \leq K \big( x^\gamma\1_{\{0<x<1\}}+ x^{-\alpha}\1_{\{x\geq 1\}}\big), \qquad \text{for all }x>0,
\end{align} 
for some finite constants $K>0$, $\alpha>0$ and $\gamma\in \R$. 
We refer to Subsections~\ref{lsjdfljsdf}--\ref{ljsdflhsdfhg} for four important examples 
in this setting. 
The  statistics of interest are the partial sum functionals $V_n$ 
given by 
\begin{align} \label{vnf}
V_n = \frac{1}{\sqrt{n}} \sum_{t=1}^n \big( f(X_t) - \E[f(X_1)] \big),
\end{align}
based on a measurable function  $f\!:\R\to\R$ with $\E[|f(X_1)|]<\infty$.  Typical examples, which are important in statistics, are the empirical characteristic functions ($f\!: x\mapsto e^{i\theta x}$, where $\theta\in \R$),   the empirical distribution functions ($f\!:x\mapsto \1_{(-\infty,t]}(x)$, where $t\in \R$), and power variations ($f\!: x\mapsto |x|^p$, where $p>0$). 
For example, in a recent  paper \cite{MOP} the empirical characteristic function has been successfully employed to estimate the parameters of a linear fractional stable motion observed at high or low frequency. 

The major breakthrough on establishing central limit theorems for $V_n$ was achieved in the paper Hsing~\cite[Theorem~1]{Hsing}, 
and was extended in \cite{PT2, PT07, MOP, BLP, BHP18}, whereas  non-central limit theorems for $V_n$ are established in \cite{BHP18, BLP, Sur02, Sur04}.   From these results,  it  follows that if 
$(X_t)$ is given by \eqref{X} with   $L$ being a $\beta$-stable L\'evy process and  the
 kernel function $g$ satisfying \eqref{gcond} with $\gamma\geq 0$ and $\alpha\beta>2$ we have that 
\[
V_n \schw Z\sim\mathcal{N}(0, v^2) \qquad \text{with} \qquad v^2=\sum_{j \in \mathbb{Z}}\text{\rm cov}\left(f(X_0), f(X_{j})\right)\in [0,\infty),
\]
for all  bounded and measurable $f\!:\R\to \R$, cf.\ \cite[Theorem~2.1]{PT2}. 
For $\alpha\beta<2$, $V_n$ has a non-Gaussian fluctuation and a different scaling rate, see e.g.\ \cite[Theorem~1.2]{BLP}, and hence we will only consider the case $\alpha\beta >2$. 

%
%


\subsection{Main results}
 To present our main result let $C^2_b(\R)$ denote the space of twice continuously differentiable functions such that $f$, $f'$ and $f''$ are bounded. Our  result  reads as follows:

\begin{theo} \label{maintheo}
Let $(X_t)_{t\in \R}$ be a L\'evy moving average given by \eqref{X}, satisfying \eqref{sdfljsd} for some $0<\beta<2$, and \eqref{gcond} with  $\alpha\beta >2$ and  $\gamma >-1/\beta$.  Let $V_n$   
be the corresponding  partial sums of  functionals, given by  \eqref{vnf},  based on 
  $f\in C^2_b(\R)$.   Also, let $Z\sim\mathcal{N}(0,1)$ be a standard Gaussian random variable, and set $v_n= \sqrt{\textup{var}(V_n)}$ for all $n\in \N$. Then, $v_n\to v$, where $v\geq 0$ is given by 
  \begin{equation}\label{sdldfjsdf}
  v^2=\sum_{j \in \mathbb{Z}}\text{\rm cov}\left(f(X_0), f(X_{j})\right)
\end{equation}
and the series \eqref{sdldfjsdf} converges absolutely.  Suppose  that $v>0$. 
Then, $v_n^{-1}V_n\schw \mathcal N(0,1)$ as $n\to \infty$. Moreover, for each $n\in\N$, 
\begin{align} \label{wasserstein}
d_W\left( v_n^{-1}V_n, Z\right) \leq {}& C 
\begin{cases}
n^{-1/2} \qquad &\text{if } \alpha\beta >3,\\
n^{-1/2}\log(n) \qquad &  \text{if } \alpha\beta = 3,\\
n^{(2-\alpha\beta)/2} & \text{if } 2<\alpha\beta <3,
\end{cases}
\intertext{and}
\label{kolmogorov}
d_K\left( v_n^{-1}V_n, Z\right) \leq {}& C
\begin{cases}
n^{-1/2} & \text{if } \alpha\beta>4, \\
n^{-1/2} \log(n) \qquad  & \text{if } \alpha\beta=4, \\
n^{(2-\alpha\beta)/4} & 2<\alpha\beta<4,
\end{cases}
\end{align}
where $C>0$ is a constant that does not depend on $n$ and $d_W$ denotes the Wasserstein distance.
\end{theo}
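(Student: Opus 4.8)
The plan is to realise $V_n$ as a bounded (in particular square-integrable) functional of a Poisson random measure and to bound its distance to the normal law by means of the new second-order Poincar\'e inequality to be established below, applied together with the elementary estimate \eqref{Dest} and the polynomial decay rates built into \eqref{sdfljsd} and \eqref{gcond}. Since the driving L\'evy process $L$ has no Gaussian component, it is an integral against a Poisson random measure $\eta$ on $\R\times\R$ with intensity $\mu=\mathrm{Leb}\otimes\nu$, and hence so is the stationary field $(X_t)_{t\in\R}$ through \eqref{X}; thus $V_n=F(\eta)$ for an explicit $F$ with $\E[F]=0$. The add-one-cost operator acts linearly on the noise, $D_{(s,x)}X_t=g(t-s)\,x\,\1_{\{s<t\}}$, so that for $f\in C^2_b(\R)$
\begin{align*}
D_{(s,x)}V_n &= \frac{1}{\sqrt n}\sum_{t=1\vee\lceil s\rceil}^{n}\big(f(X_t+g(t-s)x)-f(X_t)\big),\\
D^2_{(s,x),(r,y)}V_n &= \frac{1}{\sqrt n}\sum_{t=1\vee\lceil s\vee r\rceil}^{n}\Delta^2 f\big(X_t;\,g(t-s)x,\,g(t-r)y\big),
\end{align*}
with $\Delta^2 f$ the mixed second difference of $f$. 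Since $f\in C^2_b$, \eqref{Dest} yields the deterministic pointwise bounds $|D_{(s,x)}f(X_t)|\le C\,(1\wedge|g(t-s)x|)$ and $|D^2_{(s,x),(r,y)}f(X_t)|\le C\,(1\wedge|g(t-s)x|)(1\wedge|g(t-r)y|)$. It is exactly these interpolated quantities that the new inequality is built to accommodate: the classical bound of \cite{LPS} would instead involve the single-difference quantities $\int_{\mathbb{X}}\E[|D_z(v_n^{-1}V_n)|^3]\,\mu(dz)$ and $\int_{\mathbb{X}}\E[(D_z(v_n^{-1}V_n))^4]\,\mu(dz)$, which under \eqref{Dest} and \eqref{gcond} turn out to be of order $n^{5/2-\alpha\beta}$ and $n^{3-\alpha\beta}$ and hence diverge once $\alpha\beta$ is small; the modification replaces these by quantities built from $D$ and the second difference $D^2$, whose bilinear structure furnishes the integrability the heavy measure $\nu$ demands.

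Next I would settle the normalisation. Writing $r(j)=\mathrm{cov}(f(X_0),f(X_j))$, the Poisson covariance identity together with the first pointwise bound gives $|r(j)|\le C\int_{\R}\!\int_{\R}(1\wedge|g(-s)x|)(1\wedge|g(j-s)x|)\,\nu(dx)\,ds$; the $x$-integration, controlled by \eqref{sdfljsd}, leaves a factor $\lesssim(|g(-s)|\,|g(j-s)|)^{\beta/2}$, and since $\gamma>-1/\beta$ and $\alpha\beta>1$ the function $|g|^{\beta}$ is integrable on $(0,\infty)$, so the $s$-integration yields $|r(j)|\le C(1+|j|)^{-\alpha\beta/2}$. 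As $\alpha\beta>2$ this is summable: hence the series \eqref{sdldfjsdf} converges absolutely, and $v_n^2=\sum_{|j|<n}(1-|j|/n)\,r(j)\to v^2$ by dominated convergence; under the hypothesis $v>0$ the sequence $(v_n)$ then stays bounded away from $0$ for large $n$, so it suffices to bound the Poincar\'e functionals of $V_n$ up to a constant. Since all the resulting bounds vanish for $\alpha\beta>2$, the qualitative statement $v_n^{-1}V_n\schw\mathcal N(0,1)$ follows a posteriori.

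The substance of the proof is to substitute the difference-operator formulas into the new second-order Poincar\'e inequality and estimate each of its terms by means of the two pointwise bounds. After taking expectations, every term collapses to a purely deterministic multiple sum--integral of the schematic form $n^{-p/2}\sum_{t_1,\dots,t_p\in[n]}\int\prod_{i,j}(1\wedge|g(t_i-s_j)x_j|)\,\prod_j\nu(dx_j)\,ds_j$ with $p\in\{2,3,4\}$; performing the $x$-integrations first, which is controlled by \eqref{sdfljsd}, collapses this to a sum over times of products of powers of $|g|$. The single elementary computation that produces the thresholds is that the profile $\int_{\R}\big(\sum_{u\ge0}(1\wedge|g(u)x|)\big)^{k}\,\nu(dx)$ — comparable to $\int_1^{\infty} x^{k/\alpha-1-\beta}\,dx$ up to a saturated tail — is finite \emph{exactly} when $\alpha\beta>k$, and grows logarithmically (after the saturation cut-off at $x\asymp n^{\alpha}$) when $\alpha\beta=k$: the value $k=3$ is the one that makes the $d_W$-terms attain the rate $n^{-1/2}$, and the further term carried by the $d_K$-part of the inequality is of fourth order rather than third (as is typical for Kolmogorov bounds), so that there the relevant value is $k=4$. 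The delicate remaining point is the bookkeeping of the time-sums: in each term exactly one time index is kept free (contributing a factor $n$) while the others are summed against a gap profile $\asymp(1+|t_i-t_j|)^{-(\alpha\beta-2)}$, and the contribution of the distant spatial coordinates $s_j\to-\infty$, where $g$ is small but $\nu$ is heavy, must be shown to be summable. Carrying this out, each term turns out to be of order $\max\big(n^{-1/2},\,n^{-1/2}\log n,\,n^{(2-\alpha\beta)/2}\big)$ for $d_W$ and of order $\max\big(n^{-1/2},\,n^{-1/2}\log n,\,n^{(2-\alpha\beta)/4}\big)$ for $d_K$, according as $\alpha\beta>3$, $\alpha\beta=3$, $2<\alpha\beta<3$ (respectively $\alpha\beta>4$, $\alpha\beta=4$, $2<\alpha\beta<4$), which is \eqref{wasserstein}--\eqref{kolmogorov}.

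I expect the principal obstacle to be precisely this interplay between the heavy tail of $\nu$ and the smallness of $g$ at infinity. The crude estimate $|D_{(s,x)}V_n|\le Cn^{-1/2}\sum_t|g(t-s)|\,|x|$ is worthless, because $\int|x|^{\rho}\,\nu(dx)<\infty$ forces $\rho<\beta<2$, which is incompatible with the $L^4(\Omega)$-type quantities of the classical second-order Poincar\'e bound; one is obliged instead to interpolate between the Lipschitz and sup bounds for $f$ — the role of the factors $1\wedge|\cdot|$ in \eqref{Dest} — and, on the Stein side, to have at one's disposal an inequality whose right-hand side is phrased solely in terms of $D$ and $D^2$ evaluated through such quantities. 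Proving that inequality with a right-hand side sharp enough for the present purpose, and then carrying out the multiple-sum estimates uniformly in $n$ — in particular verifying that in each term exactly one time index may be left free while the rest are summed against convergent gap profiles — is the technical heart of the matter, and it is there that the thresholds $\alpha\beta>3$ and $\alpha\beta>4$ genuinely arise.
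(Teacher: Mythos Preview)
Your overall architecture is correct and matches the paper's: realise $V_n$ as a Poisson functional, use the deterministic pointwise bounds on $D_zV_n$ and $D^2_{z_1,z_2}V_n$ coming from $f\in C^2_b$, feed these into a second-order Poincar\'e inequality, and verify $v_n\to v$ via the covariance estimate $|r(j)|\lesssim(1+|j|)^{-\alpha\beta/2}$. However, you misidentify the nature of the required modification of the Poincar\'e inequality. The second-difference terms $\gamma_1,\gamma_2$ involving $D^2$ are \emph{already present} in the bound of \cite{LPS}, and in the paper they are shown to be of order $n^{-1/2}$ without any change (Lemmas~\ref{lem:Gamma1} and~\ref{lem:Gamma2}); they are not the problem. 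The divergent term is the pure first-difference term $\gamma_3=\int\E[|D_zF|^3]\,\lambda(dz)$, and the paper's innovation is \emph{not} to bring in $D^2$ but to replace one factor $\E[|D_zF|^3]$ by $\E[\min(|D_zF|^{3/2},|D_zF|^3)]$. This is obtained on the Stein side by bounding the Taylor remainder $|f(b)-f(a)-f'(a)(b-a)|$ by $\min(2|b-a|,(b-a)^2)$ rather than by $(b-a)^2$ alone. Under $|D_zV_n|\le A_n(z)$ one is then led to $\int\min(A_n(z)^2,A_n(z)^3)\,\lambda(dz)$, and it is this $\min$-structure (Lemma~\ref{dsfljsdfhgghd}) that yields the threshold $\alpha\beta=3$ for $d_W$. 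Your interpolated profile is thus the right object, but its role is in the single-difference remainder, not in any bilinear $D^2$ mechanism.

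For the Kolmogorov distance two further ingredients are needed that your outline omits. First, the paper inserts a smooth cut-off $\varphi$ supported in $[-2,2]$ into the Poincar\'e inequality itself, splitting according to whether $|D_zV_n|$ is small or large; this produces the modified terms $\overline\gamma_3,\gamma_4,\gamma_5,\gamma_6$, each of which is again controlled by integrals of the form $\int\min(A_n^p,A_n^q)\,\lambda(dz)$. Second, the direct bounds so obtained (Lemmas~\ref{ljsdlfjlsdh} and~\ref{lem:Gamma6}) are only as good as $n^{(3-\frac54\alpha\beta)/2}$ when $2<\alpha\beta<8/3$, which is worse than the claimed $n^{(2-\alpha\beta)/4}$; in that range the paper recovers the stated rate via the elementary inequality $d_K\le\sqrt{d_W}$ combined with the already-established Wasserstein bound.
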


\begin{rem} In the following we will make a few remarks on Theorem~\ref{maintheo}. 
	\begin{enumerate}
		\item The bounds on the Wasserstein and Kolmogorov distances to the normal distribution, depend on 
 the  interplay between memory of $X$, which is controlled by $\alpha$,  and the  tail-index of $X$, which 
 is controlled by  $\beta$. In fact, we obtain 
 the classical $1/\sqrt{n}$ rate of convergence when the tails are not too heavy and the memory is not too strong, 
 more precisely, when  $\alpha\beta >3$ or $\alpha\beta>4$  in the case of Wasserstein and Kolmogorov distance, respectively. 
 We conjecture that our bounds are optimal in this case. We note also  that all  rates in Theorem~\ref{maintheo} converge to zero. 
		\item Our main objective is to obtain the quantitative bounds 
		\eqref{wasserstein} and \eqref{kolmogorov}. However, we obtain 
		the limit theorem $V_n\schw \mathcal N(0,v^2)$ as a by-product, which is new whenever $L$ is not stable. 
		\item Our proof of Theorem~\ref{maintheo} relies on new second-order Poincar\'e inequalities, which provide general bounds between  Poisson functionals and Gaussian random variables in the Wasserstein and Kolmogorov distances,  
see Section~\ref{sec3}.   We believe that these inequalities,  which are  improvements  of  those   obtained in \cite{LPS}, are of independent interest.  Our new bounds are, in particular, important  in the regime of strong heavy tails combined with  strong memory, i.e.  
$\alpha\beta\in (2,3)$. 
In this setting, and  applying the estimate \eqref{Dest} on $D_z V_n$ (which we will  used throughout the paper), the upper bound from  \cite[Theorems~1.1 and 1.2]{LPS} diverges to infinity, and hence gives no information, whereas our new bounds converge to zero. 
	\end{enumerate}
\end{rem}

%
%
%
%

In the following   we will apply Theorem~\ref{maintheo} to the four important examples:  linear fractional stable noises, fractional L\'evy noises, stable fractional ARIMA processes,  and stable Ornstein--Uhlenbeck processes. Throughout we will fix the notation used in  Theorem~\ref{maintheo}, that is,  $V_n$ is given in  \eqref{vnf} with 
  $f\in C^2_b(\R)$,  $v_n= \sqrt{\textup{var}(V_n)}$,  $v^2$   given in \eqref{sdldfjsdf} satisfies $v^2>0$, and 
   $Z\sim\mathcal{N}(0,1)$ is  a standard Gaussian random variable.

\subsubsection{Linear fractional stable noises}\label{lsjdfljsdf}
Our first example concerns the linear fractional stable noise. To define this process 
we let  $L$ be a $\beta$-stable L\'evy process with $\beta\in (0,2)$, and 
\begin{equation}\label{sdfljsdlfj-jljsdfghsdfg}
  X_t=Y_t-Y_{t-1} \qquad \text{where}\qquad 
  Y_t = \int_{-\infty}^t \Big\{(t-s)_+^{H-1/\beta}  - (-s)_+^{H-	1/\beta}\Big\}\, dL_s,
\end{equation}
where $H\in (0,1)$. For $\beta=1$ we assume furthermore that $L$ is symmetric, that is, $L_1$ equals $-L_1$ in distribution. 
The linear fractional stable motion $(Y_t)_{t \in \R}$ has stationary increments and is self-similar with index $H$, and can be viewed as a heavy-tailed extension of the fractional Brownian motion, see \cite{SaromodTaqqu} for more details. In this setting we deduce that 
$\alpha=1-H+1/\beta$ and the condition $\alpha \beta>2$ translates to $\beta \in (1,2)$, $0<H<1-1/\beta$.  Since $\beta>1$ we never have
$\alpha \beta \geq 3$.

\begin{cor}	\label{sdlfjsdsdfsdflfj}
Let $(X_t)_{t\in\R}$ be the linear fractional stable noise defined as in \eqref{sdfljsdlfj-jljsdfghsdfg}.
For $\beta \in (1,2)$ and $0<H<1-1/\beta$ we have that 
\begin{equation}
  d_W(v_n^{-1}V_n,Z)\leq C n^{(1+\beta(H-1))/2}\qquad \text{and}\qquad  d_K(v_n^{-1}V_n,Z)\leq C n^{(1+\beta(H-1))/4},
\end{equation}
where $C>0$ is a constant not depending on $n$.
\end{cor}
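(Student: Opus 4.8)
\emph{Proof strategy.} The corollary will follow by specialising Theorem~\ref{maintheo} to the kernel of the linear fractional stable noise; the only real work is to put $(X_t)$ into the form \eqref{X}, to verify the structural assumptions \eqref{sdfljsd} and \eqref{gcond}, and to check the numerical conditions $\alpha\beta>2$ and $\gamma>-1/\beta$, after which the stated exponents drop out by an elementary computation.

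First I would identify the moving-average kernel. Writing $X_t=Y_t-Y_{t-1}$ with $Y$ as in \eqref{sdfljsdlfj-jljsdfghsdfg}, the two occurrences of $(-s)_+^{H-1/\beta}$ cancel, and since the remaining integrand vanishes for $s\ge t$ one obtains
\[
X_t=\int_{-\infty}^t g(t-s)\,dL_s,\qquad g(u)=u_+^{H-1/\beta}-(u-1)_+^{H-1/\beta},\quad u\in\R.
\]
As $L$ is $\beta$-stable with $\beta\in(1,2)$, its L\'evy measure has a density comparable to $|x|^{-1-\beta}$, so \eqref{sdfljsd} holds with the given $\beta$; moreover $\int_0^\infty|g(u)|^\beta\,du<\infty$ because $H>0$ makes the power singularities of $g$ at $u=0$ and $u=1$ integrable and $H<1$ yields the decay at infinity, so $X$ is a well-defined L\'evy moving average.

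Next I would read off the exponents appearing in \eqref{gcond}. On $(0,1)$ one has $g(u)=u^{H-1/\beta}$, giving near-origin exponent $\gamma=H-1/\beta$; the singularity of $g$ at $u=1^+$ is of the same order $|u-1|^{H-1/\beta}$ and is handled identically after splitting $g$ into its two shifted components. For $u\ge 2$ the mean value theorem gives $|g(u)|=|H-1/\beta|\,\xi^{H-1/\beta-1}\le C\,u^{H-1/\beta-1}$ for some $\xi\in(u-1,u)$, and $g$ is bounded on $[1,2]$; hence the decay exponent is $\alpha=1-H+1/\beta>0$. Observe that $\gamma=H-1/\beta<0$ in the relevant parameter range, so one genuinely needs the form of Theorem~\ref{maintheo} allowing $\gamma>-1/\beta$ rather than $\gamma\ge0$; here $\gamma>-1/\beta$ is just $H>0$.

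Finally I would translate the remaining conditions and compute. Since $\alpha\beta=\beta(1-H)+1$, the hypothesis $\alpha\beta>2$ reads $\beta(1-H)>1$, which together with $H\in(0,1)$ and $\beta\in(0,2)$ forces precisely the range $\beta\in(1,2)$, $0<H<1-1/\beta$ — the hypotheses of the corollary. As $\beta<2$ and $H>0$ give $\beta(1-H)<2$, one always has $2<\alpha\beta<3$, so Theorem~\ref{maintheo} places us in the third case of both \eqref{wasserstein} and \eqref{kolmogorov}. Using $2-\alpha\beta=1-\beta(1-H)=1+\beta(H-1)$, the bounds become $d_W(v_n^{-1}V_n,Z)\le Cn^{(1+\beta(H-1))/2}$ and $d_K(v_n^{-1}V_n,Z)\le Cn^{(1+\beta(H-1))/4}$, as claimed; the convergences $v_n\to v$ and $v_n^{-1}V_n\schw\mathcal N(0,1)$ are part of the conclusion of Theorem~\ref{maintheo}, with $v>0$ assumed throughout this section. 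I do not anticipate a genuine obstacle here: the only points requiring care are the kernel bookkeeping — in particular absorbing the $u=1$ singularity of $g$, which is not literally covered by \eqref{gcond}, into the same estimate as the origin singularity — and the elementary identity $2-\alpha\beta=1+\beta(H-1)$.
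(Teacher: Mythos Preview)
Your proposal is correct and follows the paper's own approach: the paper simply states that all the corollaries ``follow directly from Theorem~\ref{maintheo}'', having already recorded in the text preceding the corollary that $\alpha=1-H+1/\beta$, that $\alpha\beta>2$ translates into $\beta\in(1,2)$, $0<H<1-1/\beta$, and that $\alpha\beta<3$ always. Your write-up supplies the details the paper omits (the kernel identification, the verification of $\gamma>-1/\beta$, and the arithmetic $2-\alpha\beta=1+\beta(H-1)$), and you are in fact more scrupulous than the paper in flagging that the singularity of $g$ at $u=1$ is not literally accommodated by \eqref{gcond}; this is a genuine technical point the paper glosses over, but as you note it is handled by the same mechanism as the origin singularity and does not affect the outcome.
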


\subsubsection{Linear fractional L\'evy noise}

In the following we will consider the case of a linear fractional L\'evy noise, 
which has higher moments compared to the  linear fractional stable noise.   Let $L$ be a mean zero L\'evy process  with a  L\'evy density $\kappa$ satisfying $\kappa(x)\leq |x|^{-1-\zeta}$ for all $x\in [-1,1]$, where $\zeta\in [0,2)$,  
and $\kappa(x)\leq |x|^{-3}$ for all $x\in \R$ with $|x|>1$. The assumptions on $L$ are e.g.\ satisfied for tempered stable 
L\'evy processes (with uniform tilting), cf.\ \cite{Ros-Tem-Sta}, and  ensures that $L$ has finite $r$-moments for all $r\in (0,2)$,  and that the Blumenthal--Getoor index $\beta_{BG}$ of $L$ satisfies $\beta_{BG}\leq \zeta$.  Let $(X_t)$ be given by 
\begin{equation}\label{sdfsdfsdfdsf}
  X_t = Y_t-Y_{t-1}, \qquad \text{where}\qquad Y_t=\int_{-\infty}^t \Big\{(t-s)^{-\rho}_+ -(-s)_+^{-\rho}\Big\}\,dL_s,
\end{equation}
where $\rho\in (0,  1/\zeta)$. We use the convention $1/0:=\infty$.  The above  assumptions ensures that both $X$ and $Y$ 
are well-defined stochastic processes. See \cite[Section~2.6.8]{PT17Book} or \cite{Mar} for more details. The assumptions  of Theorem~\ref{maintheo} are satisfied  for $\alpha=\rho+1$ and all $\beta\in [\zeta,2)$, and hence we obtain the following corollary:

\begin{cor}\label{slfdjsdhfghdgdggd}
Let $(X_t)$ be the linear fractional L\'evy noise defined in \eqref{sdfsdfsdfdsf} with $\zeta\in [0,2)$ and $\rho\in (0,1/\zeta)$. For all $\epsilon>0$ we have that 
\begin{align}
   {}&  d_W(v_n^{-1}V_n,Z)\leq C
   \begin{cases}
  n^{-1/2} \qquad & \text{if } \rho>\frac{1}{2}, \\
  n^{-\rho+\epsilon}  & \text{if } \rho\leq \frac{1}{2},	
  \end{cases}
\intertext{and} 
 {}&  d_K(v_{n}^{-1}V_n,Z)\leq C \begin{cases}
  	n^{-1/2}  \qquad &   \text{if }\rho>1, \\
  	n^{-\rho/2+\epsilon}    & \text{if }  \rho\leq 1. 
  \end{cases}
\end{align}
\end{cor}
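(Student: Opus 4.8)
The plan is to apply Theorem~\ref{maintheo} to the linear fractional L\'evy noise $(X_t)$ given by \eqref{sdfsdfsdfdsf}, and simply translate the exponents appearing in \eqref{wasserstein} and \eqref{kolmogorov} into the parameter $\rho$. The first step is to verify that $(X_t)$ falls within the framework of Theorem~\ref{maintheo}. Writing $Y_t=\int_{-\infty}^t\{(t-s)_+^{-\rho}-(-s)_+^{-\rho}\}\,dL_s$, the increment process $X_t=Y_t-Y_{t-1}$ is again a L\'evy moving average of the form \eqref{X} with kernel $g(x)=\int_{x-1}^x (u)_+^{-\rho}\,\1_{\{u>0\}}\,du$ (the usual way of rewriting the increments of a moving-average-type integral). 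One then checks the growth condition \eqref{gcond}: near $0$ one has $g(x)\sim c\,x^{1-\rho}$ if $\rho<1$ and $g$ bounded (indeed $g(0^+)$ finite, behaving like a constant, with a logarithmic correction at $\rho=1$) so that $g(x)=O(x^\gamma\1_{\{0<x<1\}})$ with $\gamma=\min(1-\rho,0)$ — which satisfies $\gamma>-1/\beta$ provided $\beta$ is chosen close enough to $2$, since $1-\rho>-1/2\ge -1/\beta$ always holds for $\rho<3/2$, and in fact $\rho<1/\zeta\le\infty$ combined with $\beta\in[\zeta,2)$ gives the required slack; and for large $x$ a Taylor expansion of $(x)_+^{-\rho}-(x-1)_+^{-\rho}$ gives $g(x)=O(x^{-\rho-1})$, so that \eqref{gcond} holds with $\alpha=\rho+1$. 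The tail condition \eqref{sdfljsd} on the L\'evy density holds with any $\beta\in[\zeta,2)$ by the hypothesis $\kappa(x)\le|x|^{-1-\zeta}$ near $0$ and $\kappa(x)\le|x|^{-3}$ for $|x|>1$, since $|x|^{-3}\le|x|^{-1-\beta}$ for $|x|>1$ and $\beta<2$. Finally $f\in C_b^2(\R)$ by assumption, and $v^2>0$ is assumed throughout this subsection, so Theorem~\ref{maintheo} applies.

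The second step is the optimisation over the admissible range of $\beta$. Since Theorem~\ref{maintheo} holds for every $\beta\in[\zeta,2)$ with $\alpha=\rho+1$, the product $\alpha\beta=(\rho+1)\beta$ can be taken arbitrarily close to $2(\rho+1)=2\rho+2$ by letting $\beta\uparrow2$. The Wasserstein bound \eqref{wasserstein} gives the rate $n^{-1/2}$ as soon as $\alpha\beta>3$, i.e. as soon as we can choose $\beta$ with $(\rho+1)\beta>3$; this is possible precisely when $2(\rho+1)>3$, that is $\rho>1/2$. When $\rho\le1/2$ we instead land in the regime $2<\alpha\beta<3$, where \eqref{wasserstein} gives $n^{(2-\alpha\beta)/2}$; taking $\beta\uparrow2$ makes the exponent $(2-\alpha\beta)/2$ approach $(2-2(\rho+1))/2=-\rho$, which yields the bound $n^{-\rho+\epsilon}$ for every $\epsilon>0$ (the $\epsilon$ absorbing the gap $\beta<2$, and also covering the boundary case $\alpha\beta=3$ where a logarithmic factor would appear — but that case only arises for $\rho=1/2$ and $n^{-1/2}\log n\le C n^{-1/2+\epsilon}$). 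The Kolmogorov bound is entirely analogous: \eqref{kolmogorov} gives $n^{-1/2}$ once $\alpha\beta>4$, which by $\beta\uparrow 2$ is achievable iff $2(\rho+1)>4$, i.e. $\rho>1$; and for $\rho\le1$ the exponent $(2-\alpha\beta)/4\uparrow(2-2(\rho+1))/4=-\rho/2$, giving $n^{-\rho/2+\epsilon}$.

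I would present this as a short argument: first the identification of $g$ and the verification of \eqref{sdfljsd}, \eqref{gcond} with $\alpha=\rho+1$, $\gamma=\min(1-\rho,0)$ (citing \cite[Section~2.6.8]{PT17Book} or \cite{Mar} for well-posedness of $X,Y$), then the remark that $\beta\in[\zeta,2)$ is free, then the three-line case analysis above. The only genuinely delicate point is the kernel estimate near the origin when $\rho$ is close to $1$ (where $g$ has a logarithmic singularity in its derivative) and checking that $\gamma>-1/\beta$ survives: but since $\gamma=\min(1-\rho,0)\ge -\rho$ while one needs $\gamma>-1/\beta$ with $\beta$ free up to $2$, and since the regime of interest has $\rho$ bounded (the rates are trivial otherwise), this causes no problem — one picks $\beta$ close enough to $2$ that simultaneously $1/\beta<$ whatever is needed and $\alpha\beta$ is close to $2\rho+2$. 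The main obstacle, such as it is, is thus bookkeeping: making sure the single parameter $\beta$ can be chosen to meet the constraint $\gamma>-1/\beta$ and to push $\alpha\beta$ past the relevant threshold (or close to $2\rho+2$) at the same time, and checking that the boundary/logarithmic cases are harmless after the $\epsilon$-relaxation. Everything else is a direct substitution into Theorem~\ref{maintheo}.
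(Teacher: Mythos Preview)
Your overall strategy---apply Theorem~\ref{maintheo} with $\alpha=\rho+1$ and then let $\beta\uparrow 2$ so that $\alpha\beta\to 2(\rho+1)$, reading off the exponents case by case---is exactly the paper's approach: the text preceding the corollary states that the hypotheses of Theorem~\ref{maintheo} are satisfied for $\alpha=\rho+1$ and all $\beta\in[\zeta,2)$, and the corollary is then declared to follow directly.

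There is, however, a concrete error in your kernel identification. From
\[
X_t=Y_t-Y_{t-1}=\int_{\R}\big\{(t-s)_+^{-\rho}-(t-1-s)_+^{-\rho}\big\}\,dL_s
\]
one obtains
\[
g(x)=x_+^{-\rho}-(x-1)_+^{-\rho},
\]
a \emph{difference}, not the integral $\int_{x-1}^{x}u_+^{-\rho}\,du$ that you wrote (that integral formula would arise only if $Y$ were itself a time-integrated process). For $x\in(0,1)$ this gives $g(x)=x^{-\rho}$, so the correct near-zero exponent is $\gamma=-\rho$, not $\min(1-\rho,0)$. Your value $\alpha=\rho+1$ is unaffected, since the mean-value estimate $x^{-\rho}-(x-1)^{-\rho}\sim\rho\,x^{-\rho-1}$ as $x\to\infty$ still holds. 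With the correct $\gamma=-\rho$ the constraint $\gamma>-1/\beta$ becomes $\beta<1/\rho$, so the ``bookkeeping'' issue you flagged---choosing $\beta$ to meet $\gamma>-1/\beta$ while pushing $\alpha\beta$ past the threshold---is genuinely more restrictive than your computation suggests when $\rho>1/2$. The paper itself does not elaborate on this point and simply asserts that the assumptions hold on the full range $\beta\in[\zeta,2)$.
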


\subsubsection{Stable fractional ARIMA processes}
In the following we will consider the stable fractional ARIMA process. To this end,  we let 
 $p,q\in \N$, and   $\Phi_p$ and $\Theta_q$ be   polynomials with real coefficients on the form 
\begin{align}
  \Phi_p(z)=1-\phi_1 z-\dots - \phi_p z^p,\qquad \text{and}\qquad \Theta_q(z)=1+\theta_1 z+\dots+\theta_q z^q,  
\end{align}
where we assume that $\Phi_p$ and $\Theta_q$ do not have common roots, and that $\Phi_p$ has no roots in the closed unit disk 
$\{z\in \C: |z|\leq 1\}$. The stable fractional ARIMA$(p,d,q)$ process $(X_n)_{n\in \N}$ is  the  solution 
to the equation
\begin{equation}\label{lsjdfljsdfsdfsdfsf}
  \Phi_p(B)X_n = \Theta_q(B)(1-B)^{-d} \epsilon_n 
\end{equation}
where $(\epsilon_n)_{n\in \N}$ are independent and identically symmetric $\beta$-stable random variables with $\beta\in (0,2)$, $B$ denotes the backshift operator, and $d\in \R\setminus \Z$. The equation should be understood as in  \cite[Section~2]{Kok-Taqqu}. For     $d<1-1/\beta$,  
 there exists a unique solution  $(X_n)_{n\in \N}$ to  \eqref{lsjdfljsdfsdfsdfsf}, and it is a discrete moving average of the form 
\begin{equation}
  X_n= \sum_{j=-\infty}^n b_{n-j} \epsilon_j
\end{equation}
for a certain sequence $(b_j)_{j\in \N}$ with $b_j\sim c_0 j^{d-1}$ as $j\to \infty$, where $c_0$ denotes a positive constant, cf.\   Theorem~2.1 of \cite{Kok-Taqqu}. Notice that the process $(X_n)_{n\in \N}$ can be written in distribution as
\[
X_n = \int_{-\infty}^n g(n-s)\, dL_s
\]
with $g(x)=\sum_{j\geq 0} b_j \1_{[j,j+1)} (x)$ and $L$ being a symmetric $\beta$-stable L\'evy process. Here $\alpha=1-d$ and
 by Theorem~\ref{maintheo} we obtain the following result. 

\begin{cor}\label{ljsdfjlsjdf}
	Let $(X_n)_{n \in \N}$ be the stable fractional ARIMA$(p,d,q)$ process given  by \eqref{lsjdfljsdfsdfsdfsf}, with $\beta\in (0,2)$ and $d\in \R\setminus \Z$ with $d<1-2/\beta$. Then, 
	\begin{align} \label{wasserstein-223}
d_W\left( v_n^{-1}V_n, Z\right) \leq  {}& C 
\begin{cases}
n^{-1/2} \qquad &\text{if } d <1-3/\beta,\\
n^{-1/2}\log(n) \qquad &  \text{if } d = 1-3/\beta,\\
n^{1-(1-d)\beta/2} & \text{if } d\in  (1-3/\beta,1-2/\beta),
\end{cases}
\intertext{and}
d_K\left( v_n^{-1}V_n, Z\right) \leq  {}& C 
\begin{cases}
n^{-1/2} \qquad &\text{if } d <1-4/\beta,\\
n^{-1/2}\log(n)\qquad &  \text{if } d = 1-4/\beta,\\
n^{(1-(1-d)\beta/2)/2} & \text{if } d\in (1-4/\beta,1-2/\beta),
\end{cases}
\end{align}
where $C>0$ is a constant not depending on $n$.
\end{cor}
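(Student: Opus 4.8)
The plan is to obtain Corollary~\ref{ljsdfjlsjdf} as a direct specialization of Theorem~\ref{maintheo}, so essentially all that has to be done is to put the stable fractional ARIMA$(p,d,q)$ process into the form \eqref{X} and to match parameters. First I would recall, from Theorem~2.1 of \cite{Kok-Taqqu}, that for $d<1-1/\beta$ the unique solution of \eqref{lsjdfljsdfsdfsdfsf} is the discrete moving average $X_n=\sum_{j=-\infty}^n b_{n-j}\epsilon_j$ with $b_j\sim c_0 j^{d-1}$ as $j\to\infty$, and, as already observed above, that $(X_n)_{n\in\N}$ has the same law as the restriction to $t\in\N$ of the L\'evy moving average $\big(\int_{-\infty}^t g(t-s)\,dL_s\big)_{t\in\R}$, where $g(x)=\sum_{j\geq 0}b_j\1_{[j,j+1)}(x)$ and $L$ is a symmetric $\beta$-stable L\'evy process. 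Since $V_n$ and $v_n=\sqrt{\var(V_n)}$ in \eqref{vnf} are functions of the joint law of $(X_1,\dots,X_n)$ only, the distances $d_W(v_n^{-1}V_n,Z)$ and $d_K(v_n^{-1}V_n,Z)$ are unchanged if the ARIMA process is replaced by this moving average, which reduces the statement to a setting covered by Theorem~\ref{maintheo}.

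The second step is to verify the hypotheses of Theorem~\ref{maintheo} for this moving average. Symmetry and $\beta$-stability of $L$ give a L\'evy density $\kappa(x)=c\,|x|^{-1-\beta}$ for some $c>0$, so \eqref{sdfljsd} holds with the given $\beta\in(0,2)$. Since $b_j\sim c_0 j^{d-1}$, there is $C'>0$ with $|b_j|\leq C'(1+j)^{d-1}$ for all $j\geq 0$; because $d<1-2/\beta<1$ we have $d-1<0$, so on each interval $[j,j+1)$ with $j\geq 1$ the quantity $(1+j)^{d-1}$ is bounded by a constant times $x^{-(1-d)}$, while $g$ is constant, hence bounded, on $(0,1)$. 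This yields \eqref{gcond} with $\alpha=1-d>0$ and $\gamma=0$, and in particular $\gamma=0>-1/\beta$. (One also checks $\int_0^\infty|g(s)|^\beta\,ds=\sum_{j\geq 0}|b_j|^\beta<\infty$, which follows from $(1-d)\beta>2$, so that the moving average is indeed well defined.) Finally, $\alpha\beta=(1-d)\beta>2$ is exactly the standing hypothesis $d<1-2/\beta$, and $v^2>0$ is part of the notation fixed before the examples.

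The last step is to rewrite the trichotomies in \eqref{wasserstein} and \eqref{kolmogorov} with $\alpha\beta$ replaced by $(1-d)\beta$: the conditions $\alpha\beta>3$, $\alpha\beta=3$, $2<\alpha\beta<3$ become $d<1-3/\beta$, $d=1-3/\beta$, $1-3/\beta<d<1-2/\beta$, with $(2-\alpha\beta)/2=1-(1-d)\beta/2$ on the last range; likewise $\alpha\beta>4$, $\alpha\beta=4$, $2<\alpha\beta<4$ become $d<1-4/\beta$, $d=1-4/\beta$, $1-4/\beta<d<1-2/\beta$, with $(2-\alpha\beta)/4=\tfrac12\big(1-(1-d)\beta/2\big)$. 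Substituting these into \eqref{wasserstein} and \eqref{kolmogorov} reproduces \eqref{wasserstein-223} verbatim. I do not anticipate any genuine obstacle here: the only mildly delicate points are that the moving-average representation in \eqref{lsjdfljsdfsdfsdfsf} holds only in distribution---which is harmless, since $d_W$, $d_K$ and $v_n$ depend on the law of $(X_1,\dots,X_n)$ alone---and the uniform verification of the kernel bound \eqref{gcond} down to $x\to 0$ starting from the asymptotics of $(b_j)$; everything else is bookkeeping of exponents.
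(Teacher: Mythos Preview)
Your proposal is correct and takes exactly the same route as the paper: the paper simply states that all corollaries follow directly from Theorem~\ref{maintheo}, having already set up the moving-average representation $g(x)=\sum_{j\geq 0}b_j\1_{[j,j+1)}(x)$ with $\alpha=1-d$ in the text preceding the corollary. Your write-up merely makes explicit the parameter verifications (that $\kappa$ satisfies \eqref{sdfljsd}, that \eqref{gcond} holds with $\gamma=0$, and that $\alpha\beta=(1-d)\beta$) which the paper leaves implicit.
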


\subsubsection{Stable Ornstein--Uhlenbeck processes}\label{ljsdflhsdfhg}
In our last example we will consider a stable Ornstein--Uhlenbeck process $(X_t)_{t\in\R}$, given by 
\begin{equation}\label{dsfdsdf}
  X_t=\int_{-\infty}^t e^{-\lambda(t-s)}\,dL_s, 
\end{equation}
where $L$ denotes  a $\beta$-stable L\'evy process with $\beta\in (0,2)$, and  $\lambda>0$ is a finite 
constant.  In this case $\alpha>0$ can be chosen arbitrarily large and we obtain the following result.

\begin{cor}\label{lsjdflhsdgdg}
Let $(X_t)_{t\in\R}$ be a stable Ornstein--Uhlenbeck process given by \eqref{dsfdsdf}. Then  
\begin{equation}
  d_W(v_n^{-1} V_n,Z)\leq C n^{-1/2} \qquad \text{and}\qquad     d_K(v_n^{-1}V_n,Z)\leq C n^{-1/2},
\end{equation}
where $C>0$ is a constant not depending on $n$.
\end{cor}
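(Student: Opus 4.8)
The plan is to exhibit the stable Ornstein--Uhlenbeck process as a L\'evy moving average covered by Theorem~\ref{maintheo}, and then to exploit the fact that its kernel decays exponentially, so that the memory parameter $\alpha$ in \eqref{gcond} may be chosen arbitrarily large; the asserted rates then follow from the first cases of \eqref{wasserstein} and \eqref{kolmogorov}.

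First I would note that \eqref{dsfdsdf} is exactly of the form \eqref{X} with $L$ a $\beta$-stable L\'evy process and kernel $g(x)=e^{-\lambda x}\1_{\{x\ge 0\}}$; since $\int_0^\infty e^{-\beta\lambda s}\,ds<\infty$, the defining integral exists and $(X_t)$ is a well-defined stationary infinitely divisible process. The L\'evy measure of $L$ has density $\kappa(x)=c_\pm|x|^{-1-\beta}$ for suitable constants $c_\pm\ge 0$, so assumption \eqref{sdfljsd} holds with the prescribed $\beta\in(0,2)$.

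Next I would verify \eqref{gcond}. For $x\in(0,1)$ we have $|g(x)|=e^{-\lambda x}\le 1=x^{0}$, so \eqref{gcond} holds on this range with $\gamma=0$, and indeed $\gamma=0>-1/\beta$ as required. For $x\ge 1$ and any fixed $\alpha>0$, the map $x\mapsto x^{\alpha}e^{-\lambda x}$ is continuous on $[1,\infty)$ and tends to $0$ at infinity, hence is bounded by some finite constant $K_\alpha$; thus $|g(x)|\le K_\alpha\,x^{-\alpha}$ on $[1,\infty)$. Therefore $(X_t)$ satisfies the hypotheses of Theorem~\ref{maintheo} with $\gamma=0$ and with the exponent $\alpha$ replaced by any positive number we like.

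Finally, fixing $\alpha>4/\beta$ gives $\alpha\beta>4>3$, so the first branches of \eqref{wasserstein} and \eqref{kolmogorov} apply and yield $d_W(v_n^{-1}V_n,Z)\le Cn^{-1/2}$ and $d_K(v_n^{-1}V_n,Z)\le Cn^{-1/2}$, with $C$ independent of $n$ (recall that, as throughout this section, $v^2>0$ is assumed, so the normalization is legitimate and $v_n\to v$). There is essentially no obstacle here: the only step needing a word of justification is the elementary estimate that the exponential kernel beats every polynomial decay rate, while all the probabilistic content is already packaged in Theorem~\ref{maintheo}.
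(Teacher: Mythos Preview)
Your proposal is correct and follows exactly the approach indicated in the paper: the paper merely remarks that for the exponential kernel $\alpha$ can be chosen arbitrarily large, and then invokes Theorem~\ref{maintheo}. Your write-up actually supplies more detail (checking \eqref{sdfljsd} and \eqref{gcond} explicitly with $\gamma=0$) than the paper, which just states that all corollaries follow directly from Theorem~\ref{maintheo}.
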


\subsection{Structure of the paper}

The paper is structured as follows. Section~\ref{sec2} presents a related result and some discussions. 
Basic notions of Malliavin calculus on  Poisson spaces and the new  bounds for the Wasserstein and Kolmogorov distances are demonstrated in Section~\ref{sec3}. In Section~\ref{sec4} we prove Theorem~\ref{maintheo} based on the general bounds obtained in Section~\ref{sec3}.

\section{Related literature and discussion} \label{sec2}
\setcounter{equation}{0}
\renewcommand{\theequation}{\thesection.\arabic{equation}}

Normal approximation of non-linear functionals of Poisson processes defined on general state spaces has become a topic of increasing interest during the last years. In particular, quantitative bounds for normal approximations were obtained by combining Malliavin calculus on the Poisson space with Stein's method. The resulting bounds have successfully been applied in various contexts such as stochastic geometry (see, e.g., \cite{DST,LachPecc,LachPecc2,LPS,LPST,ReitznerSchulte}), the theory of U-statistics (see, e.g., \cite{DST,DoeblerPeccati,ET,ReitznerSchulte}), non-parametric Bayesian survival analysis (see \cite{PeccPeruenster,PeccTaqqu}) or statistics of spherical point fields (see, e.g., \cite{BD,BDMP}). We refer the reader also to \cite{PeccatiReitznerBook}, which contains a representative collection of survey articles.

The first quantitative bounds for asymptotically normal functionals of L\'evy moving averages have been derived in \cite{LPS}. We briefly introduce their framework, but phrase their results in an equivalent way through L\'evy processes instead of Poisson random measures. 
Let $(X_t)_{t\in \R}$ denote a L\'evy moving average of the form \eqref{X} where $L$ is centered.   
	 Assume that the L\'evy measure $\nu$ satisfies the condition
\begin{align} \label{moments}
\int_{\R} |y|^j \,\nu(dy)<\infty \qquad \text{for } j\in\{1,2\},  
\end{align}
which implies that $(L_t)_{t\in \R}$ is of locally bounded variation with a finite second moment. Suppose that the kernel function $g$ satisfies the condition
\begin{align} \label{gcondition}
\int_{\R}  |g(x)| +g(x)^2 \,dx<\infty.
\end{align}  
The functional under consideration is defined by
\[
F_T = \int_{[0,T]} f(X_t ) \,dt,\qquad T>0,
\]
which can be interpreted as the continuous version of the statistic $V_n$. We now state Theorem~8.2 of \cite{LPS} in the case of $p=0$. 

\begin{theo} \label{LPSth}
\cite[Theorem 8.2]{LPS}  Suppose that conditions \eqref{moments} and
\eqref{gcondition} hold. Assume that 
\[
\text{\rm var} (F_T) \geq c T, \qquad T\geq t_0 
\] 
with $c,t_0>0$. Furthermore, suppose that $f \in C^2_b(\R)$.
Assume that $\int_{\R} |y|^{4} \,\nu(dy)<\infty$ and  $\E[|X_1|^{4}]<\infty$. Finally, assume that 
\[
 \int_{\R} \left(\int_{\R}  |g(y-x) g(y)| \,dy\right)^4 dx< \infty.
\] 
Let $Z$ be a standard Gaussian random variable. Then there exists a constant $C>0$ such that for all $T\geq t_0$, 
\[
d\left( \frac{F_T- \E[F_T]}{\sqrt{\text{\rm var} (F_T)}} , Z \right) \leq \frac{C}{\sqrt T},
\]
for $d=d_W$ and $d=d_K$. 
\end{theo}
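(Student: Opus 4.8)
The plan is to establish Theorem~\ref{LPSth} by placing the functional $F_T$ in the framework of Poisson functionals and invoking a second-order Poincar\'e inequality on the Poisson space, in the spirit of \cite{LPS}. First I would recognize that the L\'evy process $L$ admits a representation in terms of a Poisson random measure $N$ on $\R \times \R$ with intensity $\textup{Leb} \otimes \nu$, so that $X_t$, and hence $F_T = \int_{[0,T]} f(X_t)\,dt$, becomes a square-integrable functional of $N$. The moment assumptions \eqref{moments} together with $\int |y|^4\,\nu(dy)<\infty$ and $\E[|X_1|^4]<\infty$ guarantee that $F_T \in L^2$ and, more importantly, that the first and second Malliavin derivatives $D_z F_T$ and $D^2_{z_1,z_2} F_T$ have enough integrability for the abstract bound to be meaningful.

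Next I would compute, or at least estimate, the add-one-cost operators. Adding an atom at $z = (s,x)$ to $N$ shifts $L$ by $x\mathbf 1_{[s,\infty)}$, which changes $X_t$ by $x\,g(t-s)\mathbf 1_{\{t \geq s\}}$; since $f \in C^2_b$, a first-order Taylor expansion gives $D_z F_T = \int_{[0,T]} \bigl( f(X_t + x g(t-s)) - f(X_t)\bigr)\,dt$, which is controlled by $\|f'\|_\infty |x| \int_{[0,T]} |g(t-s)|\,dt$, and similarly $D^2_{z_1,z_2}F_T$ is controlled by a product involving $|x_1 x_2|$ and the kernel factors. The structural hypothesis $\int_\R \bigl(\int_\R |g(y-x)g(y)|\,dy\bigr)^4 dx < \infty$ is exactly what is needed to bound the fourth-moment-type integral appearing in the Malliavin--Stein estimate: after inserting these pointwise bounds on $D F_T$ and $D^2 F_T$ and carrying out the $x$-integrations (which converge because of the finite fourth moment of $\nu$), the remaining spatial integrals collapse to $\int_\R \bigl(\int_\R |g(y-x)g(y)|\,dy\bigr)^4 dx$ up to constants, giving a bound of order $T^{1/2}$ before normalization (the $T$-power coming from the translation invariance/stationarity, so that the relevant integrals over $[0,T]$ scale linearly in $T$).

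Then I would combine this $O(T^{1/2})$ bound on the unnormalized quantity with the nondegeneracy assumption $\textup{var}(F_T) \geq cT$: dividing by $\sqrt{\textup{var}(F_T)} \geq \sqrt{cT}$ turns the second-order Poincar\'e bound, which is typically of the form (constant) $\times \sqrt{\textup{var}(F_T)}^{-2} \times (\text{spatial integral})^{1/2}$ plus lower-order terms, into something of order $(cT)^{-1} \cdot T^{1/2} = C T^{-1/2}$. The same chain of estimates works verbatim for both $d_W$ and $d_K$ because the Malliavin--Stein machinery on the Poisson space yields bounds of the same structural shape for both distances (with the Kolmogorov bound carrying a few extra terms that are estimated by the same spatial integrals), so no separate argument is needed.

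The main obstacle, I expect, is the bookkeeping in the second step: one must verify that all the auxiliary integrals produced by the abstract inequality — not just the single fourth-power integral highlighted above, but also the mixed terms involving $\|D^2 F_T\|$ and the terms specific to the Kolmogorov distance — are finite and of the correct order in $T$ under the stated hypotheses, and this requires carefully tracking how stationarity converts $[0,T]$-integrals into $T$ times a fixed convolution integral. A secondary technical point is justifying the interchange of Malliavin derivative and the time-integral defining $F_T$, and controlling the Taylor remainders uniformly; these are routine given $f \in C^2_b$ and the integrability of $g$ in \eqref{gcondition}, but they need to be stated. Once these verifications are in place, the result follows directly from the abstract Poisson second-order Poincar\'e inequality.
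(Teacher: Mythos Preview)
The paper does not prove Theorem~\ref{LPSth}; it is quoted verbatim from \cite[Theorem~8.2]{LPS} as background, and no argument is supplied here. Your sketch is in fact an accurate outline of how the proof in \cite{LPS} proceeds: representation of $L$ through a Poisson random measure, pointwise Taylor bounds $|D_zF_T|\leq \|f'\|_\infty|x|\int_{[0,T]}|g(t-s)|\,dt$ and the analogous product bound for $D^2_{z_1,z_2}F_T$, insertion of these into the second-order Poincar\'e inequalities \cite[Theorems~1.1--1.2]{LPS}, and then the convolution/stationarity reduction that turns the spatial integrals into multiples of $\int_{\R}\bigl(\int_{\R}|g(y-x)g(y)|\,dy\bigr)^4 dx$. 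So there is nothing to compare against in this paper, and your approach agrees with the original source.

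One point to tighten: your description of the scaling (``a bound of order $T^{1/2}$ before normalization'' followed by division by $\sqrt{cT}$) is a little loose. The $\gamma_i$ in the Poincar\'e inequality are already written for the \emph{normalized} functional, so each $D_z$ carries a factor $\text{var}(F_T)^{-1/2}$; the individual terms $\gamma_1^2,\gamma_2^2,\gamma_3$ (and the Kolmogorov-specific $\gamma_4,\gamma_5,\gamma_6$) then scale as $T\cdot\text{var}(F_T)^{-2}$ or $T\cdot\text{var}(F_T)^{-3/2}$ after the stationarity reduction, which under $\text{var}(F_T)\geq cT$ gives $O(T^{-1})$ or better for the squares, hence $O(T^{-1/2})$ for each $\gamma_i$. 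This is what you mean, but the intermediate ``$T^{1/2}$ before normalization'' is not quite the right bookkeeping for all terms. Apart from that, the plan is sound.
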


While Theorem~\ref{LPSth} (and its proof) relies  heavily on a finite fourth moment,  
Theorem~\ref{maintheo} works  for infinite variance models, as e.g.\ stable processes. 
The heavy tails of these processes force us to introduce the improved version of the bounds in \cite[Theorems 1.1 and 1.2]{LPS} in the next section,  and they are also responsible for slower rates of convergence in 
Theorem~\ref{maintheo} compared to Theorem~\ref{LPSth}.

\section{New bounds for normal approximation on Poisson spaces}\label{sec3}
\setcounter{equation}{0}
\renewcommand{\theequation}{\thesection.\arabic{equation}}

The aim of this section is to introduce new bounds on the Wasserstein and the Kolmogorov distances between a Poisson functional and a standard Gaussian random variable.  Although similar bounds were previously derived in \cite{LPS,PSTU}, they are not sufficient in certain settings. For this reason, we shall provide an improved version, which is adapted to our needs. Since such a bound might be useful in other contexts as well, we formulate and prove it in a general set-up, which is specialised later in this paper. 

\subsection{Poisson spaces and Malliavin calculus}

We recall some basic notions of Malliavin calculus on Poisson spaces and refer the reader to \cite{LastPenroseBook,PeccatiReitznerBook} for further background information. We fix an underlying probability space $(\Omega,\mathcal{A},\P)$, let $(\XX,\cX)$ be a measurable space and $\lambda$ be a $\sigma$-finite measure on $\XX$ (in the applications we consider here $\XX$ will be of the form $\R\times\R$). By $\eta$ we denote a Poisson process on $\XX$ with intensity measure $\lambda$, see \cite{LastPenroseBook} for a formal definition and an explicit construction of such a process. We often consider $\eta$ as random element in the space of integer-valued $\sigma$-finite measures on $\XX$,  denoted by $\mathbf N$, which is  equipped with the $\sigma$-algebra generated by all evaluations $\mathbf \ni \mu\to \mu(A)$ for $A\in \cX$. 
A real-valued random variable $F$ is called a Poisson functional if there exists a measurable function $\phi:\mathbf N\to \R$ such that $\PP$-almost surely $F=\phi(\eta)$. We let $L^2_\eta$ denote the space of all square-integrable Poisson functionals $F=\phi(\eta)$. 

It is well known that each $F\in L_\eta^2$ admits a chaotic decomposition 
\begin{align} \label{chaos}
F=\E F + \sum_{m=1}^{\infty} I_m(f_m),
\end{align}
where $I_m$ denotes the $m$th order integral with respect to the compensated  (signed) measure $\hat{\eta}=\eta - \lambda$ and $f_m:\XX^m \to \R$
are symmetric functions with $f_m \in L^2(\lambda^m)$. Here, for a measure $\mu$ on $\XX$ and $k\in\N$ we write $L^2(\mu^k)$ for the space of functions $f:\XX^k\to\R$, which are square integrable with respect to the $k$-fold product measure of $\mu$, and $\|\,\cdot\,\|_{L^2(\mu^k)}$ for the corresponding $L^2$-norm. For $z\in\XX$ and a Poisson functional $F=\phi(\eta) \in L_\eta^2$ we denote by
$$
D_zF = \phi(\eta+\delta_z) - \phi(\eta)
$$
the Malliavin derivative of $F$ in direction $z$, also known as the difference operator. Here, $\delta_z$ stands for the Dirac measure at $z\in\XX$. We can consider $DF$ as a function on $\Omega\times\XX$ acting as $(\omega,z)\mapsto D_zF(\omega)$. If $DF$ is square integrable with respect to the product measure $\PP\otimes\lambda$ we shall write $DF\in L^2(\PP\otimes\lambda)$ in what follows. Finally, let us define the second-order Malliavin derivative of $F$ with respect to two points $z_1,z_2\in\XX$ by putting
\begin{align*}
D^2_{z_1,z_2}F &:=D_{z_1}(D_{z_2}F) = \phi(\eta+\delta_{z_1}+\delta_{z_2}) - \phi(\eta+\delta_{z_1}) - \phi(\delta_{z_2}) + \phi(\eta)
\end{align*}
(note that this definition is symmetric in $z_1$ and $z_2$). 

The Kabanov-Skorohod integral $\delta$ maps random functions $u$ from $L^2_{\eta}(\PP\otimes\lambda)$ to  random variables in 
$L_\eta^2$. To introduce the definition of the operator $\delta$, let 
\[
u(z)= \sum_{m=0}^{\infty} I_m\left(h_m(z,\,\cdot\,) \right), \qquad z\in \XX,
\] 
denote the chaos expansion of $u(z)$, where  $h_m: \XX^{m+1} \to \R$ are measurable functions. The domain $\text{dom}\,\delta$ of 
$\delta$ consists of all random functions $u$ that satisfy the condition
\[
\sum_{m=0}^{\infty} (m+1)! \|\tilde{h}_m\|_{L^2(\lambda^{m+1})}^2 < \infty,
\]
where $\tilde{h}_m$ denotes the symmetrisation of the function $h_m$.  For $u \in \text{dom}\,\delta$  the Kabanov-Skorohod integral is defined by
\[
\delta(u)= \sum_{m=0}^{\infty} I_{m+1}(\tilde{h}_m).
\]   
Finally, we introduce the Ornstein-Uhlenbeck generator $L$ and its (pseudo) inverse $L^{-1}$. The domain $\text{dom}\, L$ of $L$ consists of all elements
$F\in L_\eta^2$ with chaotic decomposition \eqref{chaos} that additionally satisfy the condition
\[
\sum_{m=1}^{\infty} m^2 m! \|f_m\|_{L^2(\lambda^{m})}^2 < \infty.
\] 
For $F \in \text{dom} \,L$ with chaotic decomposition \eqref{chaos}   we define
\[
LF= -  \sum_{m=1}^{\infty} mI_m(f_m), \qquad   L^{-1}F= -  \sum_{m=1}^{\infty} \frac 1m I_m(f_m).
\]
The important relationships between the introduced operators can be summarised as follows:
\begin{align*}
\text{(i) }& LL^{-1}F = F, \qquad\text{(if $F$ is a centred Poisson functional)}\\[1.5 ex]
\text{(ii) }& LF = -\delta DF, \qquad \text{(if $F \in \text{dom}\, L$)}\\
\text{(iii) }& \E[F\delta(u)] = \E\int \left(D_zF\right)\,u(z)\,\lambda(dz),\qquad\text{(integration-by-parts)}
\end{align*}
where $u \in \text{dom}\, \delta$. We refer to \cite{PeccatiReitznerBook} or \cite{PSTU} for a more detailed exposition.

\subsection{Wasserstein distance}\label{sec2.1}

In this subsection we derive quantitative bounds for the Wasserstein distance. We recall that the Wasserstein distance between two random variables $F$ and $G$ is defined by 
\[
d_W(F,Z) := \sup_{h\in\textup{Lip}(1)}\big|\E[h(F)]-\E[h(G)]\big|,
\]
where the supremum is running over all Lipschitz functions $h:\R\to\R$ with a Lipschitz constant less than or equal to $1$. To formulate the next result
we introduce the three quantities
\begin{align}\label{sldfjlsdjfljs}
\gamma_1^2 &:= 4  \int  \E[(D_{z_1} F)^2(D_{z_2} F)^2]^{1/2} \,
\E[(D_{z_1,z_3}^2 F)^2(D_{z_2,z_3}^2 F)^2]^{1/2} \,\lambda^{3} (d(z_1,z_2,z_3)), \\[1.5 ex]
\label{ljsdfghksdhf}
\gamma_2^2 &:= \int 
\E[(D_{z_1,z_3}^2 F)^2(D_{z_2,z_3}^2 F)^2] \,\lambda^{3} (d(z_1,z_2,z_3)) , \\[1.5 ex]
\gamma_3 &:=  \int  \E[|D_{z} F|^3]^{1/3} \,
 \E[\min(\sqrt{8}|D_{z} F|^{3/2}, |D_{z} F|^3)]^{2/3}\, \lambda (d z)
 \label{sdlfjsdlfh}
\end{align} 
(although $\gamma_1$, $\gamma_2$ and $\gamma_3$ depend on the Poisson functional $F$, we suppress this dependency in our notation for simplicity). The theorem below  is an improved  version of the second-order Poincar\'e inequality for Poisson functionals from \cite[Theorem 1.1]{LPS}, where the main difference stems from the  term $\gamma_3$. 

\begin{theo}\label{theo:SecondOrderPoincare}
Let $F\in L_\eta^2$ be a Poisson functional, which satisfies $DF\in L^2(\P\otimes\lambda)$, $\E[F]=0$ and $\E[F^2]=1$. Further, let $Z\sim\mathcal{N}(0,1)$ be a standard Gaussian random variable. Then,
$$
d_W(F,Z) \leq \gamma_1+\gamma_2+\gamma_3.
$$
\end{theo}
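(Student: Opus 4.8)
The plan is to follow the now-standard Malliavin--Stein route on the Poisson space, combining the Stein bound for the Wasserstein distance with an integration-by-parts identity and then estimating the resulting covariance-type term via second-order difference operators. First I would invoke Stein's method: there is a bounded solution $h_Z$ of the Stein equation with $\|h_Z'\|_\infty\le 1$ and $\|h_Z''\|_\infty\le 2$, so that $d_W(F,Z)\le \sup_{h}|\E[F h(F)]-\E[h'(F)]|$, where the supremum is over $h$ with $\|h'\|_\infty\le 1$, $\|h''\|_\infty\le 2$. Using properties (i) and (ii) of the operators, write $F=LL^{-1}F=-\delta D L^{-1}F$, and apply the integration-by-parts formula (iii) with $u=-DL^{-1}F$ to obtain
\[
\E[F h(F)] = \E\!\int \big(D_z h(F)\big)\,(-D_z L^{-1}F)\,\lambda(dz).
\]
The key difference from the Gaussian (Nourdin--Peccati) setting is that $D_z$ is a difference operator, not a derivative, so $D_z h(F) = h(F+D_zF)-h(F)$ does not equal $h'(F)D_zF$; instead a first-order Taylor expansion gives $D_z h(F) = h'(F)D_zF + R_z$ with a remainder controlled by $\tfrac12\|h''\|_\infty (D_zF)^2 \wedge 2\|h'\|_\infty|D_zF|$, which is where the $\gamma_3$ term (and its $\min(\sqrt8|D_zF|^{3/2},|D_zF|^3)$ structure) will ultimately come from.

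Next I would split accordingly: the ``main'' term is $\E[h'(F)(1 - \langle DF, -DL^{-1}F\rangle_{L^2(\lambda)})]$, bounded by $\E|1-\langle DF,-DL^{-1}F\rangle_{L^2(\lambda)}|$ since $\|h'\|_\infty\le1$, and the remainder term is handled by the pointwise Taylor estimate and Hölder's inequality, yielding exactly the integral defining $\gamma_3$ (the exponents $1/3$ and $2/3$ arise from applying Hölder with $p=3$, $q=3/2$ to separate $|D_zF|$ from the remainder factor). For the main term, since $\E[F^2]=1$ and $\E\langle DF,-DL^{-1}F\rangle = \E[F^2]$ by (iii) again (with $u=-DL^{-1}F$, noting $\delta u=F$), I can pass to the variance: $\E|1-\langle DF,-DL^{-1}F\rangle|\le \sqrt{\Var\langle DF,-DL^{-1}F\rangle}$ by Jensen. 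The remaining, and main, task is to bound this variance by $(\tfrac12\gamma_1+\tfrac12\gamma_2)^2$ or similar. Here I would use the chaos expansions of $DF$ and $DL^{-1}F$, the product formula for multiple Poisson integrals, and the Poincaré-type inequality $\Var(\langle DF,-DL^{-1}F\rangle)\le \int \E[(D_z \langle DF,-DL^{-1}F\rangle)^2]\lambda(dz)$; expanding $D_z$ of the inner product by the product rule for the difference operator (which again produces second-order derivatives $D^2_{z,\cdot}F$) and applying Cauchy--Schwarz repeatedly produces the two triple integrals $\gamma_1^2$ and $\gamma_2^2$. This is essentially the computation in \cite[Theorem 1.1]{LPS}, so I would cite their estimates for the $\gamma_1,\gamma_2$ part and only redo the part that is genuinely new.

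The main obstacle, and the reason this is not merely a restatement of \cite{LPS}, is the careful treatment of the Taylor remainder $R_z=D_zh(F)-h'(F)D_zF$: one must interpolate between the two bounds $|R_z|\le \tfrac12\|h''\|_\infty(D_zF)^2$ (good when $|D_zF|$ is small) and $|R_z|\le 2\|h'\|_\infty|D_zF|$ (good when $|D_zF|$ is large), optimizing the cutoff to get the sharp $\min(\sqrt8|D_zF|^{3/2},|D_zF|^3)$ weight rather than a cruder $(D_zF)^2$ or $|D_zF|^3$ bound; this optimization, together with the Hölder split against $-D_zL^{-1}F$ (itself controlled by $\E[|D_zF|^3]^{1/3}$ via Jensen's inequality for the conditional expectation operator defining $L^{-1}$), is exactly what makes $\gamma_3$ small in the heavy-tailed regime where the \cite{LPS} bound would blow up. I would also need to verify that all the manipulations are licit, i.e. that $F\in\mathrm{dom}\,L$ with the relevant series converging and that $-DL^{-1}F\in\mathrm{dom}\,\delta$, which follows from $F\in L^2_\eta$ and standard arguments; a short density/approximation remark may be needed if $F$ is not a priori smooth enough, but since $f\in C^2_b$ in the application this is not a serious issue.
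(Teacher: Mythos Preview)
Your proposal is correct and follows essentially the same route as the paper: Stein's equation plus integration by parts, the same Taylor-type splitting of $D_zh(F)$ with the interpolated remainder bound $\min(2|D_zF|,(D_zF)^2)$, H\"older with exponents $3$ and $3/2$ to produce $\gamma_3$, and an appeal to \cite{LPS} for the $\gamma_1+\gamma_2$ estimate (their Proposition~4.1). One small correction: the inequality $\E[|D_zL^{-1}F|^3]\le\E[|D_zF|^3]$ is not literally ``Jensen for a conditional expectation'' but rather Mehler's formula combined with the $L^3$-contractivity of the associated semigroup, which is \cite[Lemma~3.4]{LPS}; also, no cutoff optimisation is needed---the $\min(\sqrt8|D_zF|^{3/2},|D_zF|^3)$ structure in $\gamma_3$ drops out mechanically once you raise the pointwise $\min$ bound to the power $3/2$.
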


\begin{rem} \label{rem3} \rm
Our bound $\gamma_3$ improves the corresponding quantity in \cite[Theorem 1.1]{LPS}, which one can obtain by replacing the term 
$\E[\min(\sqrt{8}|D_{z} F|^{3/2}, |D_{z} F|^3)]$ in $\gamma_3$  through $\E[ |D_{z} F|^3]$. It turns out that our improvement is absolutely 
crucial as the quantity introduced in \cite[Theorem 1.1]{LPS} converges to infinity in our setting. 

In our framework the main problem appears when the term $|D_{z} V_n|$ becomes large, say, larger than $1$. Such an event has a relatively high weight under the measure $\lambda$. On the contrary, in the setting of  Theorem \ref{LPSth} large values of 
$|D_{z} V_n|$ have a very low probability under higher moment conditions on the L\'evy measure imposed in Theorem~\ref{LPSth}. 
Hence, the main improvement of the original bound introduced in \cite[Theorem 1.1]{LPS} stems from carefully distinguishing between large and small values of $|D_{z} V_n|$. \qed
\end{rem}

\noindent
Apart from the application of Malliavin calculus
another ingredient on which the proof of Theorem \ref{theo:SecondOrderPoincare} is build is Stein's method for normal approximation (we refer to \cite{ChenShao} for a general account). The starting point is the following observation: a random variable $Z$ has a standard Gaussian distribution $\mathcal{N}(0,1)$ if and only if
$$
\E[f'(Z)-Zf(Z)] = 0
$$
for every continuous and piecewise continuously differentiable function $f$ with $\E[|f'(Z)|]<\infty$. This characterization together with the definition of the Wasserstein distance motivates to consider for given Borel function $h:\R\to\R$ with $\E[|h(Z)|]<\infty$ the first-order differential equation
\begin{equation}\label{sldfjhsdlhf}
 h(x) - \E[h(Z)] = f'(x) - xf(x)\,,\qquad x\in\R,
\end{equation}
also known as the Stein equation for normal approximation. A solution to \eqref{sldfjhsdlhf} is an absolutely continuous function 
$f:\R\to \R$ such that there exists a version of the derivative $f'$ satisfying \eqref{sldfjhsdlhf} for all $x\in \R$. From \cite{ChenShao} it is known that for given $h\in\textup{Lip}(1)$ the (unique bounded) solution $f=f_h$ of this equation satisfies $f\in C^1$ with $f'$ absolutely continuous and 
$$
\| f\|_\infty \leq 2,\qquad \|f'\|_\infty \leq 1,\qquad\text{and}\qquad\|f''\|_\infty \leq 2,
$$
where we write $\|\,\cdot\,\|_\infty$ for the supremum norm. Defining $\mathcal{F}_W$ to be the class of functions satisfying these two constraints, replacing $x$ by $F$ in the Stein equation and taking expectations on both sides we arrive at
$$
d_W(F,Z) \leq \sup_{f\in\mathcal{F}_W}\big|\E[f'(F)-Ff(F)]\big|.
$$
Starting with this estimate, we can now present the proof of Theorem \ref{theo:SecondOrderPoincare}.

\begin{proof}[Proof of Theorem \ref{theo:SecondOrderPoincare}]
Let $f\in\mathcal{F}_W$ and fix $a,b\in\R$. Then using the bound for $f'$ we observe that
\begin{align*}
|f(b)-f(a)-f'(a)(b-a)| & \leq |f(b)-f(a)| +|f'(a)||b-a| \leq 2\|f'\|_\infty |b-a| \leq 2|b-a|.
\end{align*}
Similarly, using the bound for $f''$ we have, by Taylor approximation,
\begin{align*}
|f(b)-f(a)-f'(a)(b-a)| & \leq \frac{\|f''\|_\infty}{ 2}(b-a)^2 \leq (b-a)^2
\end{align*}
and so
\begin{align*}
|f(b)-f(a)-f'(a)(b-a)| \leq \min(2|b-a|,(b-a)^2).
\end{align*}
Next, using the definition of the Malliavin derivative and a Taylor expansion of $f$ around $F$ we see that
$$
D_zf(F) = f'(F)(D_zF) + R(D_zF),\qquad z\in\XX,
$$
where in view of the above considerations the remainder term $R(\cdot)$ satisfies the estimate $|R(y)|\leq\min(2|y|,y^2)$ for all $y\in\R$. Applying this together with the three relations for the Malliavin operators presented in the previous subsection, we see that
\begin{align*}
\E[Ff(F)] &= \E[LL^{-1}Ff(F)] = -\E[\delta(DL^{-1}F)\,f(F)] = \E\int (D_zf(F))(-D_zL^{-1}F)\,\lambda(dz)\\
&=\E\Big[f'(F)\int (D_zF)(-D_zL^{-1}F)\,\lambda(dz)\Big] + \E\int R(D_zF)(-D_zL^{-1}F)\,\lambda(dz).
\end{align*}
As a consequence,
\begin{align*}
|\E[f'(F)-Ff(F)]| &\leq \Big|\E\Big[f'(F)\Big(1-\int (D_zF)(-D_zL^{-1}F)\,\lambda(dz)\Big)\Big]\Big|\\
&\qquad\qquad\qquad + \E\int\min(2|D_zF|,(D_zF)^2)\,|D_zL^{-1}F|\,\lambda(dz)\\
&\leq \E\left[\Big|1-\int (D_zF)(-D_zL^{-1}F)\,\lambda(dz)\Big| \right]\\
&\qquad\qquad\qquad + \E\int\min(2|D_zF|,(D_zF)^2)\,|D_zL^{-1}F|\,\lambda(dz),
\end{align*}
where we used that $\|f'\|_\infty\leq 1$. Next, we apply \cite[Proposition 4.1]{LPS} to conclude that
\begin{equation}\label{hdgdgeeue}
 \E\left[\Big|1-\int (D_zF)(-D_zL^{-1}F)\,\lambda(dz)\Big| \right] \leq \gamma_1+\gamma_2.
\end{equation}
For the term $\gamma_3$ we use H\"older's inequality with H\"older conjugates $3$ and $3/2$. Together with \cite[Lemma 3.4]{LPS}, which in our situation says that $\E[|D_zL^{-1}F|^3]\leq\E[|D_zF|^3]$ for all $z\in\XX$, this leads to
\begin{align*}
&\E\int\min(2|D_zF|,(D_zF)^2)\,|D_zL^{-1}F|\,\lambda(dz)\\
&\leq \int(\E[|D_zL^{-1}F|^3])^{1/3}(\E[\min(2|D_zF|,(D_zF)^2)^{3/2}])^{2/3}\,\lambda(dz)\\
&\leq \int(\E[|D_zF|^3])^{1/3}(\E[\min(\sqrt{8}|D_zF|^{3/2},|D_zF|^3)])^{2/3}\,\lambda(dz)=\gamma_3.
\end{align*}
The proof is thus complete.
\end{proof}

\subsection{Kolmogorov distance} \label{sec2.2} 

Now we turn our attention to the Kolmogorov distance between the two random variables $F$ and $Z$, which is defined as
\[
d_K(F, Z) = \sup_{x \in \R} \left| \mathbb{P}(F\leq x) -  \mathbb{P}(Z \leq x)\right|. 
\]
Let $f=f_x$ be the bounded solution of the Stein's equation associated with the function $1_{(-\infty,x]}$ for a given $x\in\R$. It is well known that this solution satisfies 
the inequalities 
\begin{equation}\label{sdlfjsdgfsdfsdf}
 \|f\|_{\infty} \leq \frac{\sqrt{2 \pi}}{4} \qquad \text{and} \qquad \|f'\|_{\infty}\leq 1
\end{equation}
(we interpret $f'$ as the left-sided derivative at the point $x$, where $f$ is not differentiable). Hence, with $\mathcal F_K$ denoting the class of all absolutely continuous functions satisfying \eqref{sdlfjsdgfsdfsdf} 
 we have 
\begin{equation}
d_K(F,Z)\leq \sup_{f\in \mathcal F_K } |\E[ f'(F)-F f(F)]| 
\end{equation}
where  $Z\sim \mathcal N(0,1)$.
To obtain modified bounds for the Kolmogorov distance we introduce an arbitrary measurable function $\varphi: \R \to \R$ such that $0 \leq \varphi \leq 1$. We may simply use $\varphi=1_{[-1,1]}$ or a smooth function with compact support. As discussed in Remark \ref{rem3} we will use the function $\varphi$  to distinguish between large and small values of 
the quantity $|D_{z} V_n|$.

To formulate the analogue of Theorem~\ref{theo:SecondOrderPoincare} for the Kolmogorov distance we introduce the quantities
\begin{align}
\label{sldfjsldjhf}
\overline{\gamma}_3 &:= 2 \int \E[(D_zF)^2 (1- \varphi(D_zF))^2]^{1/2}  \E[ |D_z F|^2]^{1/2}  \, \lambda(dz), \\[1.5 ex]
\gamma_4 &:= \left( \frac{1}{2} (\E F^4)^{1/4} + \frac{\sqrt{2 \pi}}{8} \right)
\int \E[(D_zF)^4  \varphi(D_zF)^2]^{1/2}  \E[ |D_z F|^4]^{1/4}   \,\lambda(dz), \\[1.5 ex]
\gamma_5 &:= \sqrt{\int \E[\varphi(D_zF)^2 (D_zF)^4]^{1/2} \E[(D_zF)^4]^{1/2} 
\, \lambda(dz)}, \\[1.5 ex]
 \gamma_6^2 &:= 3 \int \Big(  \E[ (D_{z_2}(\varphi(D_{z_1}F) D_{z_1}F))^4]^{1/2} \E[|D_{z_1} F|^4]^{1/2} \\[1.5 ex]
&\qquad\qquad+ \E[\varphi(D_{z_1}F)^2 (D_{z_1}F)^4 ]^{1/2} \E[|D^2_{z_2,z_1} F|^4]^{1/2}\\[1.5ex]
&\qquad\qquad +  \E[ (D_{z_2}(\varphi(D_{z_1}F) D_{z_1}F))^4]^{1/2} \E[|D^2_{z_2,z_1} F|^4]^{1/2}\Big)  \,\lambda^{ 2}(d(z_1,z_2))
\label{sldfjsdgf}
\end{align}
(again, we suppress in our notation the dependency on the Poisson functional $F$).

We are now prepared to present our general estimate for the Kolmogorov distance between a Poisson functional and a standard Gaussian random variable.

\begin{theo} \label{Kdistance} 
Let $F\in  L_\eta^2$ be a Poisson functional which satisfies $DF\in L^2(\PP\otimes\lambda)$, $\E[F]= 0$ and $\E[F^2]= 1$. Further, let $Z\sim\mathcal{N}(0,1)$ be a standard Gaussian random variable. Then,
\[
d_K(F, Z) \leq \gamma_1+\gamma_2+\overline{\gamma}_3 + \gamma_4+\gamma_5+\gamma_6.
\]
\end{theo}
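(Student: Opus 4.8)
The plan is to follow the same Malliavin--Stein scheme used in the proof of Theorem~\ref{theo:SecondOrderPoincare}, but working with the Kolmogorov test functions $f\in\mathcal{F}_K$ and, crucially, splitting the Malliavin derivative into a ``small'' part $\varphi(D_zF)D_zF$ and a ``large'' part $(1-\varphi(D_zF))D_zF$. The starting point is the bound $d_K(F,Z)\leq\sup_{f\in\mathcal{F}_K}|\E[f'(F)-Ff(F)]|$. As in the Wasserstein case, write $\E[Ff(F)]=\E[LL^{-1}Ff(F)]=\E\int(D_zf(F))(-D_zL^{-1}F)\,\lambda(dz)$ via the integration-by-parts formula (ii)--(iii). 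The main difference is that for the Kolmogorov solution $f'$ is only of bounded variation rather than Lipschitz, so the pointwise Taylor remainder estimate used for $\mathcal{F}_W$ is not available; instead one controls $D_zf(F)-f'(F)D_zF$ using both $\|f'\|_\infty\leq1$ and the known representation of $f''$ as a signed measure, which is where the factors $(\E F^4)^{1/4}$ and $\sqrt{2\pi}/8$ in $\gamma_4$ will appear.

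The key steps, in order, are as follows. First, decompose $D_zf(F)=f'(F)D_zF+R_z$ and then split $f'(F)D_zF = f'(F)\varphi(D_zF)D_zF + f'(F)(1-\varphi(D_zF))D_zF$. Second, handle the ``large'' piece directly: the term involving $(1-\varphi(D_zF))D_zF$ is bounded by Cauchy--Schwarz in $z$ and then in $\omega$, using $\|f'\|_\infty\leq1$ and $\E[|D_zL^{-1}F|^2]\leq\E[|D_zF|^2]$ (the $L^{-1}$ contraction, \cite[Lemma 3.4]{LPS} at exponent $2$), producing exactly $\overline{\gamma}_3$. Third, for the ``small'' piece, further relate $f'(F)\varphi(D_zF)D_zF$ back to a divergence: write $\int f'(F)\varphi(D_zF)D_zF\,(-D_zL^{-1}F)\,\lambda(dz)$ and compare $\int (D_zF)(-D_zL^{-1}F)\,\lambda(dz)$ with $1$, the discrepancy being controlled by $\gamma_1+\gamma_2$ via \cite[Proposition 4.1]{LPS} as before. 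Fourth, the genuinely new estimates: the remainder $R_z$ restricted to the small regime, the error from inserting $\varphi$, and the second-order terms arising when one differentiates the product $\varphi(D_{z_1}F)D_{z_1}F$ in a second direction $z_2$ — these are handled by Taylor expansion of $f$ with the $f''$-as-measure bound, followed by Cauchy--Schwarz, $L^{-1}$ contraction at exponent $4$, and the Poincar\'e-type inequality $\Var(G)\leq\int\E[(D_zG)^2]\,\lambda(dz)$ applied to $G=\int\varphi(D_{z_1}F)D_{z_1}F(-D_{z_1}L^{-1}F)\,\lambda(dz_1)$; these steps generate $\gamma_4$, $\gamma_5$, and $\gamma_6$ respectively, with the constant $3$ in $\gamma_6^2$ coming from the three cross-terms after expanding the square of the difference operator acting on a product.

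I expect the main obstacle to be the bookkeeping in the third and fourth steps: isolating precisely which second-order quantities appear when $D_{z_2}$ is applied to $\varphi(D_{z_1}F)D_{z_1}F$ and to $D_{z_1}L^{-1}F$, and matching each resulting term against one of $\gamma_4,\gamma_5,\gamma_6$ with the stated constants. In particular, getting the correct interplay between the $f''$-measure bound (which forces the appearance of $(\E F^4)^{1/4}$ through $\E[F^2\mathbf 1_{\{F\le x\}}]$-type quantities) and the truncation by $\varphi$ requires care; a naive application of Cauchy--Schwarz would give the wrong exponents on the $D_zF$ factors. Everything else — the integration-by-parts identities, the $L^{-1}$ contractions, the variance/Poincar\'e bound — is standard machinery from \cite{LPS,PSTU} and the proof of Theorem~\ref{theo:SecondOrderPoincare}, so the novelty and the difficulty both lie in the precise truncation argument driven by the auxiliary function $\varphi$.
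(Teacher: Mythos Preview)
Your overall strategy is right, but there is a concrete gap in the step producing $\gamma_5$ and $\gamma_6$. You propose to bound those terms via the Poincar\'e variance inequality applied to the random variable $G=\int\varphi(D_{z_1}F)D_{z_1}F(-D_{z_1}L^{-1}F)\,\lambda(dz_1)$. This is not the mechanism the paper uses, and it is not clear your route reaches the stated quantities. The paper instead starts from the decomposition of \cite[Eq.~(3.4)]{ET}, which writes the remainder exactly as $\int_0^{D_zF}\{f'(F+t)-f'(F)\}\,dt$. Inserting $\varphi$ into this integral (not into $f'(F)D_zF$ as you suggest) and following \cite{ET} produces, besides the two terms giving $\gamma_4$, the term
\[
\sup_{x\in\R}\ \E\int \varphi(D_zF)(D_zF)\,D_z(\1_{\{F>x\}})\,|D_zL^{-1}F|\,\lambda(dz).
\]
This indicator increment $D_z(\1_{\{F>x\}})$---coming from the jump of $f'$ at $x$, not from any $f''$-as-measure bound---is the object that has to be eliminated. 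The paper removes it by integration by parts: with $g(z)=\varphi(D_zF)(D_zF)|D_zL^{-1}F|$ one has $\E\int D_z(\1_{\{F>x\}})g(z)\,\lambda(dz)=\E[\1_{\{F>x\}}\delta(g)]\leq (\E[\delta^2(g)])^{1/2}$, and then the \emph{Kabanov--Skorohod isometry} $\E[\delta^2(g)]=\E\int g^2+\E\iint (D_{z_2}g(z_1))^2$ yields precisely $\gamma_5^2+\gamma_6^2$ after Cauchy--Schwarz and the $L^{-1}$ contraction. A Poincar\'e bound on $\Var(G)$ is a different inequality applied to a different object; it would give a square of an integral rather than the integral of squares appearing in $\gamma_5^2$ and $\gamma_6^2$, and you have not explained how $\Var(G)$ would even enter the estimate.

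A secondary point: your placement of the truncation is off. You split $f'(F)D_zF$ into large/small parts, but in the paper the split by $\varphi$ is applied to the remainder $\int_0^{D_zF}\{f'(F+t)-f'(F)\}\,dt$; this is what makes the large part bounded by $2(1-\varphi(D_zF))|D_zF|$ (giving $\overline{\gamma}_3$) while simultaneously confining the delicate indicator analysis to the small regime $\varphi(D_zF)$. Your step~3 also silently compares the \emph{un}truncated integral $\int(D_zF)(-D_zL^{-1}F)\,\lambda(dz)$ with $1$, leaving the discrepancy between truncated and untruncated integrals unaccounted for. These organizational issues are fixable, but the missing idea---indicator term $\to$ integration by parts $\to$ Skorohod isometry---is the essential one.
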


\begin{rem} \rm 
Theorem \ref{Kdistance} improves the original bounds from  \cite[Theorem 1.2]{LPS}, which correspond to the choice $\varphi=1$. 
We recall once again that the bound from \cite[Theorem~1.2]{LPS} converges to infinity in our setting, and thus the improvement in
Theorem \ref{Kdistance} is absolutely crucial for the proof of Theorem~\ref{maintheo}. \qed
\end{rem}
\begin{proof}
As in the proof of Theorem \ref{theo:SecondOrderPoincare} we need to bound the term 
\begin{align*}
\E[f'(F) - Ff(F)] &= \E\Big[f'(F)\Big(1- \int (D_zF) (-D_zL^{-1} F)\,\lambda(dz)\Big)\Big] \\[1.5 ex]
& \qquad\qquad- \E \left[ \int (-D_zL^{-1} F)\int_0^{D_zF} \{f'(F+t) -f'(F) \}\,dt\,\lambda(dz)    \right],
\end{align*}
where the decomposition is  derived in \cite[Equation (3.4)]{ET}. We have already seen in \eqref{hdgdgeeue} that the inequality 
\begin{align} \label{gamma12}
\left| \E\Big[f'(F)\Big(1- \int (D_zF) (-D_zL^{-1} F)\,\lambda(dz)\Big)\Big] \right| \leq \gamma_1 + \gamma_2
\end{align}
holds, where we have used that $\| f\|_\infty\leq 1$. To treat the second term of the decomposition we need to distinguish whether $D_zF$ takes small or large values. 
In particular, we have that 
\begin{align} \label{firstbound}
&\left| \E \left[ \int (-D_zL^{-1} F)\int_0^{D_zF} \{f'(F+t) -f'(F) \}\,dt\,\lambda(dz)    \right]\right|\\[1.5 ex]
&\leq 2 \E \int |D_zF| (1- \varphi(D_zF)) |D_zL^{-1} F|\,\lambda(dz) \nonumber \\[1.5 ex]
& \qquad\qquad+  \E \left[ \int |D_zL^{-1} F|\varphi(D_zF) \left|\int_0^{D_zF} \{f'(F+t) -f'(F) \}\,dt \right|\,\lambda(dz) \right].
\end{align}  
Now, repeating the methods  from the proof of \cite[Theorem 3.1]{ET} (see pages 7--9 therein) 
for the last term in \eqref{firstbound}, we conclude that 
\begin{align} \label{newbound1}
&\left| \E \left[ \int (-D_zL^{-1} F)\int_0^{D_zF} \{f'(F+t) -f'(F) \}\,dt\,\lambda(dz)    \right]\right| \nonumber\\[1.5 ex]
&\qquad\leq 2 \E \int |D_zF| (1- \varphi(D_zF)) |D_zL^{-1} F|\,\lambda(dz) \nonumber \\[1.5 ex]
&\qquad\qquad+ \frac{\sqrt{2 \pi}}{8} \E\int  \varphi(D_zF) (D_zF)^2|D_zL^{-1} F|\,\lambda(dz)\nonumber\\[1.5 ex]
&\qquad\qquad+ \frac{1}{2} \E\int  \varphi(D_zF) (D_zF)^2 |F\times D_zL^{-1} F|\,\lambda(dz) \nonumber \\[1.5 ex]
&\qquad\qquad+ \sup_{x \in \R} \E\int  \varphi(D_zF) (D_zF) D_z(1_{\{F>x\}}) |D_zL^{-1} F|\,\lambda(dz).
\end{align}
In the next step, we need to apply the ideas from \cite{LPS} to the new bound at \eqref{newbound1}.
We obtain that 
\begin{align*}
&2 \E \int |D_zF| (1- \varphi(D_zF)) |D_zL^{-1} F|\,\lambda(dz) \\[1.5 ex]
&\qquad\qquad\leq 2
\int \E[(D_zF)^2 (1- \varphi(D_zF))^2]^{1/2}  \E[ |D_zL^{-1} F|^2]^{1/2}   \lambda(dz) \\[1.5 ex]
& \qquad\qquad\leq 2 \int \E[(D_zF)^2 (1- \varphi(D_zF))^2]^{1/2}  \E[ |D_z F|^2]^{1/2}  \, \lambda(dz) =\overline{\gamma}_3. 
\end{align*}
Similarly, we have that 
\begin{align*}
& \frac{\sqrt{2 \pi}}{8} \E\int  \varphi(D_zF) (D_zF)^2|D_zL^{-1} F|\,\lambda(dz)\\[1.5 ex]
&\qquad\leq  \frac{\sqrt{2 \pi}}{8} \int \E[(D_zF)^4  \varphi(D_zF)^2]^{1/2}  \E[ |D_z F|^2]^{1/2}\,   \lambda(dz)  =: \gamma_{4}^{(1)},
\end{align*}
and 
\begin{align*}
& \frac{1}{2} \E\int  \varphi(D_zF) (D_zF)^2 |F\times D_zL^{-1} F|\,\lambda(dz)\\[1.5 ex]
&\qquad\leq   \frac{1}{2}(\E F^4)^{1/4}
\int \E[(D_zF)^4  \varphi(D_zF)^2]^{1/2}  \E[ |D_z F|^4]^{1/4}  \, \lambda(dz)  =: \gamma_{4}^{(2)}.
\end{align*}
Moreover, we note that
$$
\gamma_{4}^{(1)} + \gamma_{4}^{(2)} \leq \gamma_{4}.
$$
Next, we treat the last term in \eqref{newbound1}. We set $g(z)= 
\varphi(D_zF) (D_zF)  |D_zL^{-1} F|$ and observe the inequality
\[
\E\int D(1_{\{F>x\}}) g(z) \,\lambda(dz)= \E[1_{\{F>x\}} \delta(g)] \leq \E[ \delta^2(g)]^{1/2}
\]
is valid, where we recall that $\delta(g)$ stands for the Kabanov-Skorohod integral of $g$.
Furthermore, we have the Kabanov-Skorohod isometric formula
\[
\E[ \delta^2(g)] = \E\int g^2(z)\, \lambda(dz) + \E\int \int (D_yg(z))^2 \lambda(dy)\, \lambda(dz) =: A_1 + A_2,
\]
see \cite[Theorem 5]{LastChapter}.
Applying the Cauchy-Schwarz inequality we deduce that
\[
A_1 \leq \int \E[\varphi(D_zF)^2 (D_zF)^4]^{1/2} \E[(D_zF)^4]^{1/2} \,
 \lambda(dz) = \gamma_5^2.
\]
For the last term $A_2$ we conclude, using the inequality
$$
|D_y(GH)| \leq |H D_y G| + |G D_y H| + |D_yH D_y G|,
$$
that
\begin{align*}
A_2 &\leq 3 \int  \Big( \left(D_{z_2}(\varphi(D_{z_1}F) D_{z_1}F) \right)^2 |D_{z_1}L^{-1} F|^2
+ \varphi(D_{z_1}F) (D_{z_1}F)^2 |D^2_{z_2,z_1}L^{-1} F|^2 \\[1.5 ex]
& \qquad\qquad\qquad+ \left(D_{z_2}(\varphi(D_{z_1}F) D_{z_1}F) \right)^2 |D^2_{z_2,z_1}L^{-1} F|^2 \Big) \,\lambda^{ 2}(d(z_1,z_2))
\\[1.5 ex]
& \leq 3 \int   \Big( \E[ (D_{z_2}(\varphi(D_{z_1}F) D_{z_1}F))^4]^{1/2} \E[|D_{z_1} F|^4]^{1/2}\\[1.5 ex]
&\qquad\qquad\qquad + \E[\varphi(D_{z_1}F)^2 (D_{z_1}F)^4 ]^{1/2} \E[|D^2_{z_2,z_1} F|^4]^{1/2} \\[1.5 ex]
& \qquad\qquad\qquad +  \E[ (D_{z_2}(\varphi(D_{z_1}F) D_{z_1}F))^4]^{1/2} \E[|D^2_{z_2,z_1} F|^4]^{1/2} \Big) \,
\lambda^{ 2}(d(z_1,z_2))\\
&=\gamma_6^2.
\end{align*}
Combining \eqref{gamma12} and \eqref{newbound1}, we conclude the assertion of Theorem~\ref{Kdistance}. 
\end{proof}

\section{Proof of Theorem~\ref{maintheo}}\label{sec4}
\setcounter{equation}{0}
\renewcommand{\theequation}{\thesection.\arabic{equation}}

All positive constants, which do not depend on $n$, are denoted by $C$ although they may change from occasion to occasion. Furthermore, we assume without loss of generality that $K=1$ in condition \eqref{gcond}.  We extend the definition of the kernel $g$ to the whole real line by setting $g(x)=0$ for $x\leq 0$.  To apply Theorems~\ref{theo:SecondOrderPoincare} and \ref{Kdistance} it will be   useful for us to represent the process $(X_t)_{t \in \R}$, in \eqref{X},  in terms of an integral with respect to a Poisson random measure. 
Namely, if $\eta$ denotes a Poisson random measure on $\R^2$ constructed from $L$, see e.g.\ \cite[Theorem~19.2]{Sato}, 
then $\eta$ has intensity measure $\lambda$ given by 
$$
\lambda(ds,dy) =ds\,\nu(dy),
$$
where $\nu$ is the L\'evy measure of $L$,  defined in \eqref{sfldjsdf}.  We can re-write
 $X_t=\int_{-\infty}^t g(t-s)\,dL_s$ as 
\begin{equation}\label{sdklfhjsdgf}
	X_t= \int_{\R^2} g(t-s)x\, \Big( \eta(ds,dx)-\chi(g(t-s)x)\,ds\,\nu(dx)\Big) +\tilde \theta
\end{equation}
where the integral is defined as in \cite[p.~3236]{Ro18}, and 
\begin{equation}\label{skdhfksdgfgss}
\tilde \theta= \int_\R \Big[g(s) \theta + \int_\R \Big( \chi(xg(s))-g(s)\chi(x)\Big) \,\nu(dx)\Big] \,ds.
\end{equation}
The integrals in \eqref{sdklfhjsdgf} and \eqref{skdhfksdgfgss} exist since $X_t$ is well-defined, cf.\ \cite[Theorem~2.7]{RajRos}. 
By \eqref{sdklfhjsdgf}  it follows that  $X_t$ is a Poisson functional for all $t\in \R$, i.e.\ there exists 
a measurable mapping $\phi=\phi_t:\textbf{N}\to\R$  such that $X_t=\phi(\eta)$. Throughout this section we will repeatedly 
use that for any measurable positive function $f:\R^2\to \R_+$ we have that 
\begin{equation}
  \int_{\R^2} f(z)\,\lambda(dz)\leq C \int_\R \Big( \int_\R f(s,x)\,ds\Big) |x|^{-1-\beta}\,dx,
\end{equation}
which follows by assumption \eqref{sdfljsd} on $\nu$. Here and below, $C$ will denote a strictly positive and finite constant whose value might change from occasion to occasion.

\subsection{Preliminary estimates}

We let $z=(x,s)\in\R^2$, $z_j=(x_j,s_j) \in \R^2$ for $j\in\{1,2,3\}$, and  $f\in C^2_b(\R)$. By the mean-value theorem and \eqref{sdklfhjsdgf}, we have that 
 \begin{align}\label{sdlfjsdlkhf}
 	|D_z f(X_j)|= |f(X_j+xg(j-s))-f(X_j)|\leq C\min(1,|xg(j-s)|), 
 \end{align}
 since  $f$ and $f'$ are bounded.
By \eqref{sdlfjsdlkhf} we obtain that 
\begin{align} \label{Dest}
|D_{z} V_n| & \leq  \frac{C}{\sqrt{n}} \sum_{t=1}^n \min(1, |xg(t-s)|)=: A_n(z). 
\end{align}
Furthermore, 
\begin{align}
 {}&   D_{z_1,z_2}^2 V_n =\frac{1}{\sqrt{n}} \sum_{t=1}^n \Big\{ f\Big( x_1g(t-s_1)+x_2g(t-s_2)+X_t\Big)\\ 
 {}& \phantom{D_{z_1,z_2}^2 V_n =\frac{1}{\sqrt{n}} \sum_{t=1}^n \Big\{}  
 -f\Big( x_1g(t-s_1)+X_t\Big)-f\Big( x_2g(t-s_2)+X_t\Big)+f(X_t)\Big\}. 
\end{align}
Again, by  applying the mean-value theorem and using that $f, f'$ and  $f''$ are bounded we obtain the estimate 
\begin{align} \label{D2est}
|D_{z_1, z_2}^2 V_n|  \leq \frac{C}{\sqrt{n}} \sum_{t=1}^n \min(1, |x_1g(t-s_1)|) 
\min(1, |x_2g(t-s_2)|)=: A_n(z_1,z_2). 
\end{align}
Notice that $A_n(z_1,z_2) \leq C\min(A_n(z_1) , A_n(z_2) ).$

 We define the quantity 
\begin{align} \label{rhok}
\rho_k:= \int_{\R} |g(x)g(x+k)|^{\beta/2} \,dx,\qquad k\in\Z.
\end{align}
We will show later that the terms $\gamma_1$ and $\gamma_2$ appearing in Theorems~\ref{theo:SecondOrderPoincare} and \ref{Kdistance}  are both bounded by $Cn^{-1/2}$ solely under the condition $\sum_{k=1}^{\infty} \rho_k < \infty$, cf.\ Lemma~\ref{ljsdlfhslfhg}, while the other terms require the stronger assumption \eqref{gcond}. 

\begin{lem}\label{ljsdlfhslfhg}
	With $\rho_k$ given in \eqref{rhok} we have that $\rho_k\leq C k^{-\alpha\beta/2}$ for all $k\geq 1$. Furthermore, 
	\begin{equation}\label{dslfjdsf}
  \int_\R \Big( \prod_{i=1}^4 |g(t_i-s)|\Big)^{\beta/4}\,ds \leq C |t_2-t_1|^{-\alpha\beta/4} 
  |t_3-t_1|^{-\alpha\beta/4} |t_4-t_1|^{-\alpha\beta/4}, 
\end{equation}
for all $t_1,\dots, t_4\geq 1$, where we use the convention $0^{-r}:=1$ for all $r>0$. 
\end{lem}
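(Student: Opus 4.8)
\textbf{Proof proposal for Lemma~\ref{ljsdlfhslfhg}.}

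The plan is to estimate each integral by splitting the domain of integration into regions where only one of the competing bounds in \eqref{gcond} is active, and then to control the interaction of the remaining factors by a H\"older-type argument. First I would prove the bound on $\rho_k$. By definition $\rho_k=\int_\R |g(x)g(x+k)|^{\beta/2}\,ds$; since $g$ vanishes on the negative half-line, the effective range is $x\geq 0$, and for $k\geq 1$ the factor $g(x+k)$ is always evaluated at an argument $\geq 1$, so that $|g(x+k)|\leq (x+k)^{-\alpha}$. I would split $[0,\infty)$ into $[0,1)$ and $[1,\infty)$. On $[0,1)$ we use $|g(x)|\leq x^\gamma$, giving a contribution bounded by $k^{-\alpha\beta/2}\int_0^1 x^{\gamma\beta/2}\,dx$, which is finite precisely because $\gamma>-1/\beta$ (so $\gamma\beta/2>-1/2>-1$). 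On $[1,\infty)$ we use $|g(x)|\leq x^{-\alpha}$ and bound the integral by $\int_1^\infty x^{-\alpha\beta/2}(x+k)^{-\alpha\beta/2}\,dx\leq \int_1^\infty x^{-\alpha\beta/2}\,dx\cdot \sup_{x\geq1}(x+k)^{-\alpha\beta/2}$... actually more carefully, since $\alpha\beta>2$ we have $\alpha\beta/2>1$, so $\int_1^\infty x^{-\alpha\beta/2}(x+k)^{-\alpha\beta/2}\,dx\leq k^{-\alpha\beta/2}\int_1^\infty x^{-\alpha\beta/2}\,dx$ using $(x+k)^{-\alpha\beta/2}\le k^{-\alpha\beta/2}$; alternatively split at $x=k$. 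Either way the bound $\rho_k\leq Ck^{-\alpha\beta/2}$ follows, with the constant absorbing the convergent integrals.

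For the second assertion \eqref{dslfjdsf}, by stationarity (translation invariance of Lebesgue measure) I may assume $t_1=0$, so the integral becomes $\int_\R \prod_{i=1}^4 |g(t_i-s)|^{\beta/4}\,ds$ with the $t_i\geq 0$ relabelled as $0=t_1\le$ the others in absolute value after shifting — more precisely I would order the shifted times. The key idea is to use H\"older's inequality with four exponents equal to $4$: $\int \prod_{i=1}^4|g(t_i-s)|^{\beta/4}\,ds\leq \prod_{i=1}^4\big(\int |g(t_i-s)|^\beta\,ds\big)^{1/4}$ is too crude since it loses the decay in the differences. Instead, the right move is a \emph{pairing} H\"older estimate: write the product as $|g(t_1-s)g(t_2-s)|^{\beta/4}\cdot|g(t_1-s)g(t_3-s)|^{\beta/4}\cdot|g(t_1-s)g(t_4-s)|^{\beta/4}\cdot|g(t_1-s)|^{-\beta/4}$ — no, that introduces a negative power. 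The cleaner route: apply H\"older with three exponents $3$ to group $\{t_1,t_2\},\{t_1,t_3\},\{t_1,t_4\}$, i.e.
$$
\int \prod_{i=1}^4|g(t_i-s)|^{\beta/4}\,ds
=\int \prod_{j=2}^4 |g(t_1-s)|^{\beta/12}|g(t_j-s)|^{\beta/4}\,ds
\le \prod_{j=2}^4\Big(\int |g(t_1-s)|^{\beta/4}|g(t_j-s)|^{3\beta/4}\,ds\Big)^{1/3},
$$
and then one still has to bound $\int |g(t_1-s)|^{\beta/4}|g(t_j-s)|^{3\beta/4}\,ds$ by $C|t_j-t_1|^{-3\alpha\beta/4}$; combined over $j$ this yields exactly the claimed product of $|t_j-t_1|^{-\alpha\beta/4}$. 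So the matter reduces to a \emph{two-kernel} estimate: for suitable nonnegative exponents $p+q=\beta$ with $q$ the larger, $\int_\R |g(u-s)|^{p}|g(v-s)|^{q}\,ds\leq C|v-u|^{-\alpha q}$ whenever $|v-u|\geq 1$ (and $\leq C$ otherwise, by the convention $0^{-r}:=1$). This I would prove by the same splitting as for $\rho_k$: assume WLOG $u=0$, $v\ge 1$; on $s\in(-\infty,0)$ both arguments exceed... on $s$ near $0$ use the $x^\gamma$ bound for $g(-s)$... actually one must be careful that $g(u-s)$ and $g(v-s)$ are supported on $s<u$ and $s<v$ respectively. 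Over $s<0$ both factors use the polynomial-decay branch if $|v-s|,|u-s|\ge1$; over $s\in(v-1,v)$ the factor $|g(v-s)|$ uses the $x^\gamma$ branch; the region $s\in(u-1,u)$ similarly. In each piece one bounds the "far" kernel by $|v-u|^{-\alpha}$ to the appropriate power and integrates the "near" kernel, which is integrable to the relevant power exactly because of the assumptions $\alpha\beta>2$ and $\gamma\beta>-1$.

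The main obstacle is the bookkeeping in the two-kernel estimate: one must handle all the sub-ranges of $s$ (the two "near-singularity" zones around $u$ and $v$ where the $x^\gamma$ branch of \eqref{gcond} is in force, the "bulk" zone in between, and the tail zones), and in each verify that the exponents appearing after H\"older — namely $p\beta/4$-type powers of $\gamma$ near the origin and $\alpha$ in the tails — remain in the integrable range. The constraint $\gamma>-1/\beta$ is exactly what is needed so that $|g|^{\beta/4}$ and its H\"older-reweighted versions stay integrable near the origin, while $\alpha\beta>2$ guarantees tail-integrability; tracking which exponent is needed where, and checking that the larger exponent $q$ can always be routed to the kernel responsible for the decay factor $|v-u|^{-\alpha q}$, is the only delicate point. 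Everything else is a routine change of variables and splitting of integrals. I would also remark that the convention $0^{-r}:=1$ simply covers the case of repeated times $t_i=t_j$, where the corresponding factor is replaced by a bound from the convergent integral $\int|g|^{\beta\cdot(\text{something})}<\infty$ rather than from a decay estimate.
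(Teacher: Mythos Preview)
Your treatment of $\rho_k$ is correct and matches the paper: both split the integral at $1$ and use the two branches of \eqref{gcond}. The paper carries out an explicit scaling $u=s/k$ while you simply bound $(x+k)^{-\alpha\beta/2}\le k^{-\alpha\beta/2}$ and integrate what remains; both arguments go through, yours being the more direct.

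For \eqref{dslfjdsf} the paper gives essentially one line (``the same procedure and successive substitutions''), so your H\"older reduction to two-kernel integrals is a genuinely different route. It contains a real gap, however. After H\"older you need
\[
\int_\R |g(t_1-s)|^{\beta/4}\,|g(t_j-s)|^{3\beta/4}\,ds\;\le\; C\,|t_j-t_1|^{-3\alpha\beta/4},
\]
and this fails when $\alpha\beta<4$. With $g(x)=x^{-\alpha}\1_{\{x\ge1\}}$ and $k=t_j-t_1\ge1$ the contribution from $u\in[1,k]$ satisfies
\[
\int_1^k u^{-\alpha\beta/4}(u+k)^{-3\alpha\beta/4}\,du \;\ge\; (2k)^{-3\alpha\beta/4}\int_1^k u^{-\alpha\beta/4}\,du \;\ge\; c\,k^{1-\alpha\beta},
\]
because $\alpha\beta/4<1$ forces the last integral to grow like $k^{1-\alpha\beta/4}$; and $k^{1-\alpha\beta}\gg k^{-3\alpha\beta/4}$ precisely when $\alpha\beta<4$. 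Taking $t_2=t_3=t_4=t_1+k$, the same computation shows that the left side of \eqref{dslfjdsf} itself is of order $k^{1-\alpha\beta}$, so the stated pointwise bound is in fact too strong in the regime $2<\alpha\beta<4$; the paper's one-line argument does not address this either. For $\alpha\beta>4$ your H\"older approach is valid and yields a clean alternative to the paper's sketch.
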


\begin{proof}
From the substitution $u=k^{-1} s$ we obtain 
\begin{align}
 \rho_k \leq   {}& \int_0^1 s^{\gamma \beta/2} (k+s)^{-\alpha\beta/2}\,ds+\int_1^\infty s^{-\alpha\beta}
  (k+s)^{-\alpha \beta/2}\,ds \\  =  {}&  k^{1+\gamma\beta/2-\alpha\beta/2} \int_0^{1/k} u^{\gamma \beta/2} (1+u)^{-\alpha\beta/2}\,du+ k^{1-\alpha\beta} \int_{1/k}^\infty u^{-\alpha\beta/2} (1+u)^{-\alpha\beta/2}\,du
  \\   \leq {}& C k^{-\alpha\beta/2}. \label{sldfjsldjf}
\end{align}
Moreover, by the same procedure as in \eqref{sldfjsldjf} and using 
 succesive substitutions we obtain the bound \eqref{dslfjdsf}. 
\end{proof}

\begin{lem}\label{sdfljsdlfj}
The series $v^2$ defined in Theorem~\ref{maintheo} converges absolutely, and  $v_n^2\to v^2$ as $n\to \infty$. 
\end{lem}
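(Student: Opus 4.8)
The plan is to establish absolute convergence of $v^2$ and the convergence $v_n^2 \to v^2$ simultaneously by controlling the covariances $\cov(f(X_0),f(X_k))$. First I would write, for each $k \in \Z$, the bound $|\cov(f(X_0),f(X_k))| \leq C \rho_{|k|}$ (or more precisely $C\min(1,\rho_{|k|})$), where $\rho_k$ is the quantity defined in \eqref{rhok}. This estimate is the crux: since $f \in C^2_b(\R)$, the covariance of $f(X_0)$ and $f(X_k)$ should be controllable by the ``overlap'' of the kernels $g(-s)$ and $g(k-s)$, and the natural way to quantify this on the Poisson/L\'evy space is via a coupling or interpolation argument. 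Concretely, one writes $X_0$ and $X_k$ both as integrals against $\eta$ and decomposes each into the part of the integral over the region where the kernels genuinely overlap and the remaining (nearly independent) part; an $L^1$-type estimate combined with \eqref{sdfljsd} yields a bound of the form $C \int_\R |g(-s) g(k-s)|^{\beta/2}\,ds$ after splitting $|g|^\beta = |g|^{\beta/2}|g|^{\beta/2}$ and using the elementary inequality $\min(1,ab) \leq \min(1,a)\min(1,b) \leq (ab)^{1/2} \wedge 1$ appropriately; this is exactly $\rho_{|k|}$.

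Granting that estimate, the absolute convergence of $v^2 = \sum_{j\in\Z}\cov(f(X_0),f(X_j))$ follows immediately from Lemma~\ref{ljsdlfhslfhg}, which gives $\rho_k \leq C k^{-\alpha\beta/2}$ for $k \geq 1$, since $\alpha\beta > 2$ implies $\sum_k k^{-\alpha\beta/2} < \infty$. For the convergence $v_n^2 \to v^2$, I would expand $v_n^2 = \var(V_n) = \frac{1}{n}\sum_{s,t=1}^n \cov(f(X_s),f(X_t)) = \sum_{|k|<n}\big(1 - \tfrac{|k|}{n}\big)\cov(f(X_0),f(X_k))$ using stationarity of $(X_t)$. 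Then $v_n^2 \to \sum_{k\in\Z}\cov(f(X_0),f(X_k)) = v^2$ by dominated convergence for series: the summands $\big(1-\tfrac{|k|}{n}\big)\1_{\{|k|<n\}}\cov(f(X_0),f(X_k))$ are dominated by the summable sequence $|\cov(f(X_0),f(X_k))| \leq C\min(1,\rho_{|k|})$ and converge pointwise to $\cov(f(X_0),f(X_k))$. Finally, $v^2 \geq 0$ since each $v_n^2 \geq 0$.

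The main obstacle is the covariance estimate $|\cov(f(X_0),f(X_k))| \leq C\rho_{|k|}$: making the ``decoupling'' rigorous requires a careful argument on the L\'evy space. One clean route is to use the Malliavin/Poincar\'e-type bound $|\cov(f(X_0),f(X_k))| \leq \int_{\R^2} \E[|D_z f(X_0)|\,|D_z f(X_k)|]\,\lambda(dz)$ (a standard consequence of the Clark--Ocone or integration-by-parts formula on the Poisson space), and then apply \eqref{sdlfjsdlkhf} together with $\lambda(ds,dx) = ds\,\nu(dx)$ and \eqref{sdfljsd} to get
\begin{equation}
|\cov(f(X_0),f(X_k))| \leq C \int_\R \int_\R \min(1,|xg(-s)|)\min(1,|xg(k-s)|)\,|x|^{-1-\beta}\,dx\,ds.
\end{equation}
Splitting the $x$-integral and using $\min(1,|xg(-s)|)\min(1,|xg(k-s)|) \leq |x|^\beta |g(-s)g(k-s)|^{\beta/2} \wedge 1$ on the relevant ranges and integrating in $x$ produces the clean bound $C\rho_{|k|}$ (together with the trivial bound $C$ coming from $\|f\|_\infty$). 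Once this inequality is in hand, the remainder of the proof is routine summation and dominated convergence.
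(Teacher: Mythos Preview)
Your approach is essentially the same as the paper's: bound $|\cov(f(X_0),f(X_k))|$ by $C\rho_{|k|}$ via a Poisson-space covariance inequality together with the deterministic estimate \eqref{sdlfjsdlkhf}, invoke Lemma~\ref{ljsdlfhslfhg} and $\alpha\beta>2$ for summability, and finish with the Ces\`aro/dominated-convergence computation for $v_n^2\to v^2$. The only difference is that the paper makes the covariance step precise by citing the Last--Penrose covariance identity, which yields $|\cov(F,G)|\leq\int\E[(D_zF)^2]^{1/2}\E[(D_zG)^2]^{1/2}\,\lambda(dz)$; your stated form $\int\E[|D_zF|\,|D_zG|]\,\lambda(dz)$ is not what Clark--Ocone or integration-by-parts gives directly, but since the bounds on $D_zf(X_j)$ here are deterministic in $z$, both versions collapse to the same integrand and the discrepancy is harmless.
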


\begin{proof}
In the following we will show that 
	\begin{equation}\label{sdfljsdlfjsdfsdfsdf} 
  \sum_{j=1}^\infty |\cov(f(X_j),f(X_0))|<\infty. 
\end{equation}
To prove  \eqref{sdfljsdlfjsdfsdfsdf} we use the covariance identity from Theorem~5.1 in \cite{Last-Penrose-rep} to get 
\begin{equation}\label{dslfjsdhf}
  \textrm{cov}(f(X_j),f(X_0)) = \E\Big[ \int_0^1 \int \E[ D_zf(X_j)\mid \mathcal G_u] \E[ D_z f(X_0)\mid \mathcal G_u] \,\lambda(dz)\,du \Big]
\end{equation}
where $(\mathcal G_u)_{u\in [0,1]}$ are certain $\sigma$-algebras (which will not be important for us). As  noticed  in \cite[Proof of Theorem~1.4]{Last-Penrose-rep} we may always assume that we are in the setting of \cite[Theorem~1.5]{Last-Penrose-rep}. By  Cauchy--Schwarz inequality and the contractive properties of conditional expectation it follows from \eqref{dslfjsdhf} that 
\begin{align}
{}&   \big|\textrm{cov}(f(X_j),f(X_0))\big|\leq 
  \int_0^1 \int \E\Big[\Big|\E[ D_zf(X_j)\mid \mathcal G_u] \E[ D_z f(X_0)\mid \mathcal G_u]\Big|\Big] \,\lambda(dz)\,du
 \\ {}&  \qquad \leq  \int \E[ |D_zf(X_j)|^2]^{1/2}\E[ |D_z f(X_0)|^2]^{1/2} \,\lambda(dz)
 \\ {}& \qquad \leq C  \int_\R \Big( \int_\R  \min(|x^2 g(j-s)g(-s)|,1) \,\nu(dx)\Big) \,ds\\
{}& \qquad \leq C \int_\R |g(j-s)g(-s)|^{\beta/2}\,ds\\
&\qquad=C \rho_j\leq C j^{-\alpha\beta/2} \label{sdfljsdfljh}
\end{align}
 where we have used \eqref{sdlfjsdlkhf} in the third  inequality, the equality follows by the definition of $\rho_j$, and the last inequality follows by Lemma~\ref{ljsdlfhslfhg}.  Since $\alpha\beta>2$, \eqref{sdfljsdfljh} implies \eqref{sdfljsdlfjsdfsdfsdf}.  
 
By \eqref{sdfljsdlfjsdfsdfsdf}, the series $v^2$ converges absolutely. Moreover,  the  stationarity of $(X_j)_{j\in \N}$ implies that 
\begin{align}
  v_n^2={}& \E[V_n^2]= n^{-1} \sum_{j,i=1}^n \cov(f(X_j),f(X_i)) \\ ={}&   \var(f(X_0))+ 2 \sum_{j=1}^n (1-j/n)\cov(f(X_0),f(X_j))\\
 \to   {}& \var(f(X_0))+ 2 \sum_{j=1}^\infty \cov(f(X_0),f(X_j))=v^2 \qquad \quad \textrm{as }n\to\infty, 
\end{align}
where the convergence follows by Lebesgue's dominated convergence theorem together with \eqref{sdfljsdlfjsdfsdfsdf}. 
\end{proof}

\subsection{Bounding the Wasserstein distance}\label{sdljfjsdghfsdg}

Since $v_n^2=\text{var}(V_n) \to v^2$ as $n \to \infty$, cf.\ Lemma~\ref{sdfljsdlfj}, and  $v>0$  by assumption, we note that $v_n$ is bounded away from zero. 
%
Let $\gamma_1,\gamma_2, \gamma_3$ be defined in \eqref{sldfjlsdjfljs}, \eqref{ljsdfghksdhf} and \eqref{sdlfjsdlfh} with 
$F=V_n$. Then,  by  Theorem~\ref{theo:SecondOrderPoincare} and using that $v_n$ is bounded away from 0,  we have that 
\begin{align}\label{dslfjsldfh}
  d_W(V_n/v_n,Z)\leq C(\gamma_1+\gamma_2+\gamma_3).
\end{align}
Using the estimates \eqref{Dest} and \eqref{D2est} we will now compute bounds for the quantities $\gamma_1$, $\gamma_2$ and $\gamma_3$ appearing in the bound for the Wasserstein distance in Theorem~\ref{theo:SecondOrderPoincare}. Notice that the right hand sides in both estimates \eqref{Dest} and \eqref{D2est} are deterministic, so the expectations in the definitions of $\gamma_1, \gamma_2, \gamma_3$ can be omitted. We start with the term $\gamma_1$.

\begin{lem}\label{lem:Gamma1}
There exists a constant $C>0$ such that $\gamma_1\leq Cn^{-1/2}$.
\end{lem}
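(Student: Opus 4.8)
The plan is to bound $\gamma_1$ purely by deterministic estimates, exploiting that the right-hand sides of \eqref{Dest} and \eqref{D2est} are non-random. Recall
\[
\gamma_1^2 = 4 \int \E[(D_{z_1}V_n)^2(D_{z_2}V_n)^2]^{1/2}\,\E[(D^2_{z_1,z_3}V_n)^2(D^2_{z_2,z_3}V_n)^2]^{1/2}\,\lambda^3(d(z_1,z_2,z_3)).
\]
Using \eqref{Dest} we have $\E[(D_{z_1}V_n)^2(D_{z_2}V_n)^2]^{1/2}\leq A_n(z_1)A_n(z_2)$, and using \eqref{D2est} together with the observation $A_n(z_1,z_2)\leq C\min(A_n(z_1),A_n(z_2))$ we get $\E[(D^2_{z_1,z_3}V_n)^2(D^2_{z_2,z_3}V_n)^2]^{1/2}\leq C\,A_n(z_1,z_3)A_n(z_2,z_3)\leq C\,A_n(z_1)A_n(z_2)A_n(z_3)^2$ (or alternatively keeping the product form $A_n(z_1,z_3)A_n(z_2,z_3)$, whichever integrates better). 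So $\gamma_1^2 \leq C\int A_n(z_1)^2 A_n(z_2)^2 A_n(z_3)^2\,\lambda^3(d(z_1,z_2,z_3)) = C\big(\int A_n(z)^2\,\lambda(dz)\big)^3$, and the whole problem reduces to showing $\int_{\R^2} A_n(z)^2\,\lambda(dz)\leq C n^{-1/3}$, which would give $\gamma_1\leq C n^{-1/2}$.

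First I would therefore focus on the single integral $\int A_n(z)^2\lambda(dz)$. Expanding the square,
\[
\int A_n(z)^2\,\lambda(dz) = \frac{C}{n}\sum_{t,t'=1}^n \int_{\R^2} \min(1,|xg(t-s)|)\min(1,|xg(t'-s)|)\,ds\,\nu(dx),
\]
and bounding $\min(1,a)\min(1,b)\leq \min(1,a)^{1/2}\min(1,b)^{1/2}\cdot\ldots$ — more precisely using $\min(1,|xg(t-s)|)\min(1,|xg(t'-s)|)\leq |x|^\beta |g(t-s)g(t'-s)|^{\beta/2}$ after integrating out $x$ against $|x|^{-1-\beta}dx$ on the relevant region (this is the same elementary inequality already used in the proof of Lemma~\ref{sdfljsdlfj}, namely $\int_\R \min(|x|^2 ab,1)\nu(dx)\leq C|ab|^{\beta/2}$) — I would reduce the inner integral to $\int_\R |g(t-s)g(t'-s)|^{\beta/2}\,ds = \rho_{|t-t'|}$. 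Hence $\int A_n(z)^2\lambda(dz)\leq \frac{C}{n}\sum_{t,t'=1}^n \rho_{|t-t'|}\leq C\sum_{k\in\Z}\rho_{|k|} \leq C(\rho_0 + \sum_{k\geq 1}k^{-\alpha\beta/2})$ by Lemma~\ref{ljsdlfhslfhg}. Since $\alpha\beta>2$ this series converges, so $\int A_n(z)^2\lambda(dz)\leq C$ — but that only gives $\gamma_1\leq C n^{-1/2}\cdot n^{1/2}$... wait, one must be careful: $A_n(z)$ already carries a factor $n^{-1/2}$, so $A_n(z)^2$ carries $n^{-1}$, and the double sum over $t,t'$ has effectively $O(n)$ contributing pairs (for each $t$, $\sum_{t'}\rho_{|t-t'|}=O(1)$), giving $\int A_n(z)^2\lambda(dz) = O(1)$. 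Then $\gamma_1^2 = O(1)$ is not enough — so the correct route is to keep one factor of $A_n(z_3,\cdot)$ linking $z_3$ to both $z_1,z_2$ and exploit the extra summation constraints, i.e. NOT to crudely bound $A_n(z_1,z_3)\leq CA_n(z_1)$ symmetrically. Instead one writes out $\gamma_1^2\leq C n^{-3}\sum$ over six time indices with a connectedness/diagonal structure forced by the $A_n(\cdot,\cdot)$ terms, and counts that the number of "free" indices is $O(n)$, producing $n^{-3}\cdot n = n^{-2}$, hence $\gamma_1\leq Cn^{-1}\leq Cn^{-1/2}$; the $\rho_k$ summability (Lemma~\ref{ljsdlfhslfhg}) is exactly what makes each constrained sum $O(1)$.

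Concretely, the key steps in order: (i) substitute \eqref{Dest} and \eqref{D2est} into \eqref{sldfjlsdjfljs}, dropping expectations since the bounds are deterministic; (ii) integrate out the three $x$-variables against $\nu$ using $\int_\R\min(|x|^2 |ab|,1)\,\nu(dx)\leq C|ab|^{\beta/2}$, turning each $\min$-product into a power of $|g|$; (iii) this leaves a sum over time indices $t_1,\ldots,t_6 \in\{1,\ldots,n\}$ of products of terms of the form $\int_\R \prod |g(t_i-s_j)|^{\beta/2}\,ds_j$; (iv) bound the $s$-integrals using $\rho_k\leq Ck^{-\alpha\beta/2}$ and \eqref{dslfjdsf} from Lemma~\ref{ljsdlfhslfhg}; (v) perform the time summations, using that summing any $\rho_{|t-t'|}$ over one index gives $O(1)$ because $\sum_k \rho_{|k|}<\infty$, so only $O(n)$ of the $n^6$ terms survive up to a constant; (vi) combine with the $n^{-3}$ prefactor to get $\gamma_1^2\leq Cn^{-2}$, hence $\gamma_1\leq Cn^{-1}\leq Cn^{-1/2}$. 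The main obstacle is step (v): carefully organizing the time-index bookkeeping so that the diagonal constraints imposed by the second-order derivative terms $A_n(z_i,z_3)$ are used efficiently — one must avoid decoupling $z_3$ from both $z_1$ and $z_2$, since doing so loses a factor of $n$ and the bound degrades to merely $O(1)$ rather than $o(1)$. Getting the combinatorics right — i.e. showing exactly $O(n)$ of the index-tuples contribute — is the heart of the argument; the analytic input (summability of $\rho_k$, which only needs $\alpha\beta>2$) is comparatively soft and already packaged in Lemma~\ref{ljsdlfhslfhg}.
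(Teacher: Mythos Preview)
Your approach is essentially the paper's: substitute the deterministic bounds \eqref{Dest} and \eqref{D2est}, integrate out the $x$-variables to produce $\rho_k$'s, and then exploit $\sum_k\rho_k<\infty$ (Lemma~\ref{ljsdlfhslfhg}). You also correctly diagnose why the crude decoupling $A_n(z_1,z_3)A_n(z_2,z_3)\leq C A_n(z_1)A_n(z_2)A_n(z_3)^2$ fails.

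However, your final bookkeeping is off. The product $A_n(z_1)\,A_n(z_2)\,A_n(z_1,z_3)\,A_n(z_2,z_3)$ consists of \emph{four} single sums, each carrying a factor $n^{-1/2}$; hence the prefactor is $n^{-2}$ and there are four time indices $t_1,\dots,t_4$, not six with $n^{-3}$. After integrating out $x_1,x_2,x_3$ (grouping the two $\min$'s sharing each $x_i$ via $\min(1,|x_ia|)\min(1,|x_ib|)\leq\min(1,x_i^2|ab|)$), the summand becomes $\rho_{t_1-t_3}\,\rho_{t_2-t_4}\,\rho_{t_3-t_4}$. The three $\rho$-factors eliminate three degrees of freedom, leaving exactly one free index; thus the sum is $O(n)$ and
\[
\gamma_1^2 \;\leq\; \frac{C}{n^2}\cdot n\Big(\sum_{k\geq 0}\rho_k\Big)^3 \;=\; \frac{C}{n},
\]
which gives precisely $\gamma_1\leq Cn^{-1/2}$, not $Cn^{-1}$. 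Your overclaim $\gamma_1\leq Cn^{-1}$ is not supported (and is in general false), but since the lemma only asks for $Cn^{-1/2}$ this does not damage the statement --- just correct the index count and the prefactor.
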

\begin{proof}
To estimate 
\begin{equation}
  \gamma_1^2 = 4  \int  \E[(D_{z_1} V_n)^2(D_{z_2} V_n)^2]^{1/2} \,
\E[(D_{z_1,z_3}^2 V_n)^2(D_{z_2,z_3}^2 V_n)^2]^{1/2} \lambda^{ 3} (d(z_1,z_2,z_3)),
\end{equation}
 we deduce the following inequality by  \eqref{Dest} and \eqref{D2est}:
\begin{align*}
&\E[(D_{z_1} V_n)^2(D_{z_2} V_n)^2]^{1/2} 
\E[(D_{z_1,z_3}^2 V_n)^2(D_{z_2,z_3}^2 V_n)^2]^{1/2} \\[1.5 ex]
& \qquad \leq \frac{C}{n^2}
\sum_{t_1,\ldots, t_4=1}^n
  \Big\{ \min(1, x_1^2 |g(t_1-s_1) g(t_3-s_1)|) \min(1, x_2^2 |g(t_2-s_2)  g(t_4-s_2)| )
 \\  & \qquad \phantom{\leq \frac{C}{n^2}
\sum_{t_1,\ldots, t_4=1}^n
  \Big\{} \times   \min(1, x_3^2 |g(t_3-s_3) g(t_4-s_3)|  )\Big\}.
\end{align*}
By the substitution $w_i^2=x_i^2y_i$ for $i\in\{1,2,3\}$ we see that for any $y_1, y_2, y_3>0$ it holds that
\[
\int_{\R^3} \min(1, x_1^2 y_1) \min(1, x_2^2 y_2) \min(1, x_3^2 y_3) |x_1x_2x_3|^{-1-\beta} \, dx_1 \,dx_2\, dx_3 
= C y_1^{\beta/2} y_2^{\beta/2} y_3^{\beta/2}.
\]
Indeed, we have that 
\begin{align}
 {}&  \int_{\R^3} \min(1, x_1^2 ) \min(1, x_2^2 ) \min(1, x_3^2 ) |x_1x_2x_3|^{-1-\beta} \,d x_1\,dx_2\,x_3 \\ 
  {}& \qquad =  \Big( \int_{\R} \min(1, x^2 )|x|^{-1-\beta}\,dx\Big)^3<\infty,
\end{align}
since $\beta\in (0,2)$. 
Therefore, we deduce the estimate 
\begin{align}
&   \gamma_1^2 \leq  C \int  \E[(D_{z_1} V_n)^2(D_{z_2} V_n)^2]^{1/2} 
\E[(D_{z_1,z_3}^2 V_n)^2(D_{z_2,z_3}^2 V_n)^2]^{1/2} \lambda^{ 3} (dz_1, dz_2,dz_3) 
\\[1.5 ex]
& \phantom{ \gamma_1^2} \leq \frac{C}{n^2}  \sum_{t_1,\ldots, t_4=1}^n \Big\{
\int_{\R} |g(t_1-s_1) g(t_3-s_1)|^{\beta/2} \,ds_1  \int_{\R} |g(t_2-s_2) g(t_4-s_2)|^{\beta/2} \,ds_2
\\[1.5 ex]
& \phantom{ \gamma_1^2 \leq \frac{C}{n^2}  \sum_{t_1,\ldots, t_4=1}^n \Big\{ }   \times \int_{\R} |g(t_3-s_3) g(t_4-s_3) |^{\beta/2} \,ds_3\Big\}\\
& \phantom{ \gamma_1^2} = \frac{C}{n^2}  \sum_{t_1,\ldots, t_4=1}^n  \rho_{t_1-t_3}\rho_{t_2-t_4}\rho_{t_3-t_4} \leq  \frac{C}{n}  \sum_{v_1,v_2, v_3=-n}^n  \rho_{v_1}\rho_{v_2}\rho_{v_3}\leq  \frac{C}{n} \Big(\sum_{k=0}^\infty \rho_k\Big)^3 \label{sdlfjsdlfj-sdfs}
\end{align} 
where the first equality follows by substitution, the next inequality follows 
by  the change of variables $v_1=t_1-t_3$, $v_2=t_2-t_4$, $v_3=t_3-t_4$, and the last inequality follows from the symmetry 
$\rho_{-k}=\rho_k$. By Lemma~\ref{ljsdlfhslfhg}, we have that   $\sum_{k=0}^\infty \rho_k<\infty$, and hence, \eqref{sdlfjsdlfj-sdfs}  completes the proof of the estimate  
$\gamma_1 \leq Cn^{-1/2}$. 
\end{proof}

Using a similar reasoning, we can also bound the term $\gamma_2$.

\begin{lem}\label{lem:Gamma2}
There exists a constant $C>0$ such that $\gamma_2\leq Cn^{-1/2}$.
\end{lem}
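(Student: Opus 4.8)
The plan is to mimic the proof of Lemma~\ref{lem:Gamma1} almost verbatim, since $\gamma_2$ differs from $\gamma_1$ only by the absence of the factor $4\,\E[(D_{z_1}V_n)^2(D_{z_2}V_n)^2]^{1/2}$ under the integral, and by the fact that the triple product of second-order derivatives must be controlled directly instead of being paired with first-order derivatives. Concretely, starting from
\[
\gamma_2^2 = \int \E\big[(D^2_{z_1,z_3}V_n)^2 (D^2_{z_2,z_3}V_n)^2\big]^{1/2}\,\lambda^3(d(z_1,z_2,z_3)),
\]
I would first apply the deterministic estimate \eqref{D2est} (so that the expectation can be dropped) to bound the integrand by
\[
\frac{C}{n^2}\sum_{t_1,\dots,t_4=1}^n \min(1,|x_1 g(t_1-s_1)|)\min(1,|x_3 g(t_3-s_1)|)\min(1,|x_2 g(t_2-s_2)|)\min(1,|x_3 g(t_4-s_2)|).
\]
Wait — the variable $z_3=(x_3,s_3)$ appears in both factors $D^2_{z_1,z_3}V_n$ and $D^2_{z_2,z_3}V_n$, so after using \eqref{D2est} the $x_3$-dependence comes in through $\min(1,x_3^2|g(t-s_3)g(t'-s_3)|)$ and I should write the bound accordingly, exactly as in Lemma~\ref{lem:Gamma1}. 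Then I integrate in $x_1,x_2,x_3$ against $|x_1x_2x_3|^{-1-\beta}$ using the same elementary identity $\int_\R \min(1,x^2 y)|x|^{-1-\beta}\,dx = C y^{\beta/2}$ (valid since $\beta\in(0,2)$), and integrate in $s_1,s_2,s_3$ to produce factors $\rho_{t_1-t_3}$, $\rho_{t_2-t_4}$, $\rho_{t_3-t_4}$ (using the definition \eqref{rhok}).

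This yields
\[
\gamma_2^2 \leq \frac{C}{n^2}\sum_{t_1,\dots,t_4=1}^n \rho_{t_1-t_3}\,\rho_{t_2-t_4}\,\rho_{t_3-t_4}
\leq \frac{C}{n}\sum_{v_1,v_2,v_3=-n}^n \rho_{v_1}\rho_{v_2}\rho_{v_3}
\leq \frac{C}{n}\Big(\sum_{k=0}^\infty \rho_k\Big)^3,
\]
by the change of variables $v_1=t_1-t_3$, $v_2=t_2-t_4$, $v_3=t_3-t_4$ (noting that for fixed $v_1,v_2,v_3$ there are at most $n$ choices of the remaining free index) and the symmetry $\rho_{-k}=\rho_k$. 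Since Lemma~\ref{ljsdlfhslfhg} gives $\rho_k\leq Ck^{-\alpha\beta/2}$ with $\alpha\beta>2$, the series $\sum_{k=0}^\infty\rho_k$ is finite, so $\gamma_2\leq Cn^{-1/2}$, which is the claim.

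There is essentially no serious obstacle here: the argument is a routine adaptation of the preceding lemma, and the only point requiring a moment's care is the bookkeeping in the change of variables $(t_1,\dots,t_4)\mapsto(v_1,v_2,v_3)$ — one must check that each triple $(v_1,v_2,v_3)$ is hit by at most $O(n)$ quadruples $(t_1,\dots,t_4)$ in $\{1,\dots,n\}^4$, which gives the gain of $n^{-1}$ (rather than $n^{-2}$) and hence the final rate $n^{-1/2}$. The structure of the three $\rho$-factors is slightly different from that in Lemma~\ref{lem:Gamma1} (here the indices are $t_1-t_3$, $t_2-t_4$, $t_3-t_4$, with $t_3$ and $t_4$ each appearing twice), but this does not affect the counting argument, and summability of $\sum_k\rho_k$ still closes the estimate.
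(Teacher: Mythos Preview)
Your overall strategy is right, but the execution has two concrete errors that make the argument as written incorrect.

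First, the definition you wrote for $\gamma_2^2$ carries an extraneous exponent $1/2$: the actual quantity is
\[
\gamma_2^2=\int\E\big[(D^2_{z_1,z_3}V_n)^2(D^2_{z_2,z_3}V_n)^2\big]\,\lambda^3(d(z_1,z_2,z_3)),
\]
with no square root on the expectation (see \eqref{ljsdfghksdhf}). This matters for the bookkeeping.

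Second, and more importantly, without that square root each of $(D^2_{z_1,z_3}V_n)^2$ and $(D^2_{z_2,z_3}V_n)^2$ contributes \emph{two} factors of $\min(1,|x_3g(\cdot-s_3)|)$, so the product contains four such factors, not two. Hence the $x_3$-dependence is through $\min\big(1,x_3^4\prod_{i=1}^4|g(t_i-s_3)|\big)$, and the relevant identity is $\int_\R\min(1,x^4y)|x|^{-1-\beta}\,dx=Cy^{\beta/4}$, producing the four-fold integral $\int|g(t_1-s_3)g(t_2-s_3)g(t_3-s_3)g(t_4-s_3)|^{\beta/4}\,ds_3$ rather than a single $\rho_{t_3-t_4}$. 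Likewise, the $s_1$- and $s_2$-integrals give $\rho_{t_1-t_2}$ and $\rho_{t_3-t_4}$ (not $\rho_{t_1-t_3}$ and $\rho_{t_2-t_4}$), since $z_1$ appears only in $D^2_{z_1,z_3}$ and its square pairs $t_1$ with $t_2$. So your claimed bound $\frac{C}{n^2}\sum\rho_{t_1-t_3}\rho_{t_2-t_4}\rho_{t_3-t_4}$ is not what the computation yields.

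The paper closes this gap with one extra step: apply $|xy|\leq x^2+y^2$ inside the four-fold integral to get
\[
\int\Big(\prod_{i=1}^4|g(t_i-s_3)|\Big)^{\beta/4}ds_3
\leq \rho_{t_1-t_3}+\rho_{t_2-t_4},
\]
after which the bound becomes $\frac{C}{n^2}\sum\rho_{t_1-t_2}\rho_{t_3-t_4}(\rho_{t_1-t_3}+\rho_{t_2-t_4})$ and your change-of-variables argument then gives $\gamma_2^2\leq\frac{C}{n}(\sum_k\rho_k)^3$ exactly as you concluded. So the fix is small, but the structure of $\gamma_2$ is genuinely different from that of $\gamma_1$, and it is precisely this four-versus-two multiplicity in $z_3$ that you missed.
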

\begin{proof}
Recall that 
\begin{equation}
  \gamma_2^2 = \int 
\E[(D_{z_1,z_3}^2 V_n)^2(D_{z_2,z_3}^2 V_n)^2]\, \lambda^{ 3} (d(z_1,z_2,z_3)).
\end{equation}
By the  inequality \eqref{D2est} we immediately conclude that  
\begin{align*}
{}& \E[(D_{z_1,z_3}^2 V_n)^2(D_{z_2,z_3}^2 V_n)^2] \leq  \frac{C}{n^2}  \sum_{t_1,\ldots, t_4=1}^n \Big\{
 \min(1, x_1^2 |g(t_1-s_1) g(t_2-s_1)|)  \\[1.5 ex] {}& 
\times \min(1, x_2^2 |g(t_3-s_2) g(t_4-s_2)|)
\min(1, x_3^4 |g(t_1-s_3) g(t_2-s_3) g(t_3-s_3) g(t_4-s_3)| )\Big\}.
\end{align*}
As in the proof of Lemma \ref{lem:Gamma1} a substitution shows that for any $y_1, y_2, y_3>0$,
\[
\int_{\R^3} \min(1, x_1^2 y_1) \min(1, x_2^4 y_2) \min(1, x_3^2 y_3) |x_1x_2x_3|^{-1-\beta} \,dx_1\,d x_2\,dx_3
= C y_1^{\beta/2} y_2^{\beta/4} y_3^{\beta/2}.
\]
Therefore, we have the estimate 
\begin{align} \label{est1}
\gamma_2^2={}& \int  \E[(D_{z_1,z_3}^2 F)^2(D_{z_2,z_3}^2 F)^2] \lambda^{ 3} (dz_1, dz_2,dz_3) 
\nonumber \\[1.5 ex]
 \leq {}& \frac{C}{n^2}  \sum_{t_1,\ldots, t_4=1}^n \Big\{
\int_{\R} |g(t_1-s_1) g(t_2-s_1)|^{\beta/2} ds_1  \int_{\R} |g(t_3-s_2) g(t_4-s_2)|^{\beta/2} ds_2
\nonumber \\[1.5 ex]
{}& \phantom{\frac{C}{n^2}  \sum_{t_1,\ldots, t_4=1}^n \Big\{   }\times \int_{\R} |g(t_1-s_3) g(t_2-s_3) g(t_3-s_3) g(t_4-s_3)|^{\beta/4} ds_3\Big\}.
\end{align} 
Now,  the  inequality $|xy|\leq x^2+y^2$, valid for all $x,y\in \R$, implies  
\begin{align*}
&\int_{\R} |g(t_1-s_3) g(t_2-s_3) g(t_3-s_3) g(t_4-s_3)|^{\beta/4} ds_3  \\
&\qquad\leq \int_{\R} |g(t_1-s_3)  g(t_3-s_3) |^{\beta/2} ds_3
+ \int_{\R} | g(t_2-s_3)  g(t_4-s_3)|^{\beta/2} ds_3, 
\end{align*}
which by \eqref{est1}, shows that  
\begin{equation}\label{sdfsdf-sdlfjsdbfg}
  \gamma_2^2
\leq \frac{C}{n} \Big( \sum_{k\geq 0} \rho_k \Big)^3,
\end{equation}
by the same arguments as in the proof of Lemma~\ref{lem:Gamma1}. 
Since $\sum_{k\geq 0} \rho_k<\infty$, cf.\ Lemma~\ref{ljsdlfhslfhg}, 
the estimate $\gamma_2 \leq Cn^{-1/2}$  follows from \eqref{sdfsdf-sdlfjsdbfg}. 
\end{proof}

The final term $\gamma_3$ in the bound for the Wasserstein distance is more subtle. It is this term, which decays slower than $n^{-1/2}$ for certain parameter regimes.

\begin{lem}\label{lem:Gamma3}
There exists a constant $C>0$ such that 
\begin{equation}
  \gamma_3\leq C   \begin{cases}
    n^{-1/2} \qquad \qquad & \textrm{if }\alpha\beta>3, \\ 
    n^{-1/2} \log(n) & \textrm{if }\alpha\beta=3,\\
  	  n^{(2-\alpha\beta)/2}  & \textrm{if } 2<\alpha\beta<3.
  \end{cases}
\end{equation}
\end{lem}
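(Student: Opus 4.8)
The plan is to start from the deterministic bound \eqref{Dest}. Since $|D_zV_n|\le A_n(z):=\frac{C}{\sqrt n}\sum_{t=1}^n\min(1,|xg(t-s)|)$ pointwise and $u\mapsto\min(\sqrt{8}\,u^{3/2},u^3)$ is non-decreasing, one gets $\E[|D_zV_n|^3]^{1/3}\le A_n(z)$ and $\E[\min(\sqrt{8}|D_zV_n|^{3/2},|D_zV_n|^3)]^{2/3}\le\min(2A_n(z),A_n(z)^2)$, whence $\gamma_3\le C\int_{\R^2}\min(A_n(z)^2,A_n(z)^3)\,\lambda(dz)\le C\int_\R\int_\R\min(A_n(s,x)^2,A_n(s,x)^3)\,|x|^{-1-\beta}\,dx\,ds$, using the bound on $\nu$ from \eqref{sdfljsd}. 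The point of the argument is that the truncation built into this $\min$ must be used genuinely: for $2<\alpha\beta<3$ the cube $A_n^3$ alone integrates to something that grows with $n$, while the square $A_n^2$ alone carries no decorrelation in one of the three summation indices and only yields an $O(1)$ bound; the correct rate sits strictly between the two, so I would split the domain into $\{A_n(s,x)\le1\}$ (where the integrand is $A_n^3$) and $\{A_n(s,x)>1\}$ (where it is $A_n^2$), equivalently interpolate $\min(A_n^2,A_n^3)=A_n^2\min(1,A_n)$.

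For the inner $x$-integrals I would use the elementary inequality $\prod_{j=1}^m\min(1,c_j|x|)\le\min\bigl(1,|x|\,(\textstyle\prod_j c_j)^{1/m}\bigr)^m$ for $c_j\ge0$, which after the scaling $y=(\prod_j c_j)^{1/m}|x|$ turns $\int_\R\prod_{j=1}^m\min(1,|xg(t_j-s)|)\,|x|^{-1-\beta}\,dx$ into $C\prod_{j=1}^m|g(t_j-s)|^{\beta/m}$; when one factor is instead of the form $\min(1,|xg(t_j-s)|)^\theta$ the same manipulation returns $C\prod_j|g(t_j-s)|^{e_j}$ with $\sum_je_j=\beta$. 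The remaining $s$-integrals $\int_\R\prod_j|g(t_j-s)|^{e_j}\,ds$ are controlled by Lemma~\ref{ljsdlfhslfhg} and by the analogous bounds obtained by the same successive-substitution argument: the two-factor case is exactly $\rho_{|t_1-t_2|}\le C|t_1-t_2|^{-\alpha\beta/2}$, and the three-factor (weighted) versions produce bounds of the form $C\,(1\vee|t_i-t_k|)^{-c}(1\vee|t_j-t_k|)^{-c'}$ around the smallest of the three indices. Here the hypothesis $\gamma>-1/\beta$ enters to guarantee $\int|g|^\beta<\infty$ and, more generally, integrability of these weighted $s$-integrals once the exponents are arranged to sum to $\beta$, while the decay of $g$ at infinity enters through $\alpha$.

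Substituting these back leaves sums over $t_1,t_2,t_3\in\{1,\dots,n\}$ of products of decay coefficients. Passing to gap variables and using stationarity (one index contributes a factor $\le n$), one reduces to one- and two-dimensional sums $\sum_k(1\vee k)^{-c}$; since $\alpha\beta>2$ the two-factor exponent $\alpha\beta/2$ exceeds $1$, so those sums converge, and the thresholds in the statement are precisely where the relevant exponent crosses $1$. This yields, after the $n^{-3/2}$-type normalisation, a convergent sum and hence the rate $n^{-1/2}$ when $\alpha\beta>3$; a logarithmically divergent sum, hence $n^{-1/2}\log n$, at $\alpha\beta=3$; and a polynomially divergent sum, hence $n^{(2-\alpha\beta)/2}$, when $2<\alpha\beta<3$.

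The hard part will be making the domain split count — estimating $\int_{\{A_n\le1\}}A_n^3\,\lambda$ and $\int_{\{A_n>1\}}A_n^2\,\lambda$ \emph{without} simply dropping the restriction, since $\int A_n^2\,\lambda$ is only $O(1)$ and $\int A_n^3\,\lambda$ is too large for $2<\alpha\beta<3$. On the first region one must retain the smallness of $A_n$, and on the second one must use that $\{A_n>1\}$ is confined to the range of large $|x|$ with many saturated kernel terms, equivalently to $s$ lying before (or in) the block $\{1,\dots,n\}$; this forces a case analysis according to the size of $|x|$ and of $t_1-s$, in which the behaviour of $g$ near the origin (controlled by $\gamma>-1/\beta$) is delicate. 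Finally, the borderline logarithm at $\alpha\beta=3$ (and, in the Kolmogorov analogue treated afterwards, at $\alpha\beta=4$) requires the corresponding boundary sums to be estimated with some care.
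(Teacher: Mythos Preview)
Your reduction $\gamma_3 \le C\int_{\R^2}\min(A_n(z)^2,A_n(z)^3)\,\lambda(dz)$ is exactly the paper's first step. From there, however, the paper does \emph{not} expand $A_n^3$ as a triple sum $n^{-3/2}\sum_{t_1,t_2,t_3}\prod_j\min(1,|xg(t_j-s)|)$ and then try to interpolate via product inequalities and Lemma~\ref{ljsdlfhslfhg}. Instead it proves a general estimate (Lemma~\ref{dsfljsdfhgghd}) for $\int\min(A_n^p,A_n^q)\,\lambda$ with $p\in[0,2]$, $q>2$, by treating $A_n(x,s)$ as a single function of $(x,s)$ and splitting the domain according to the size of $|x|$: the three ranges $|x|\in(0,1)$, $|x|\in[1,n^\alpha]$, $|x|\in(n^\alpha,\infty)$, together with a further split of $s$ relative to the block $\{1,\dots,n\}$ (and, for large $|x|$, relative to $-x^{1/\alpha}$ and $-x^{1/\alpha}n^{1/(2\alpha)}$). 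In each region one bounds the sum $\sum_{t=1}^n\min(1,|xg(t-s)|)$ directly --- for instance by $Cx^{1/\alpha}$ on the middle range of $|x|$ with $s\in[-n,n]$ --- and then integrates in $s$ and $x$ explicitly. The threshold $\alpha\beta=3$ emerges from the $dx$-integral $\int_1^{n^{\alpha/2}}x^{-1-\beta+q/\alpha}\,dx$ with $q=3$, not from a sum over gap variables.

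You have correctly diagnosed the obstruction to your own route: once $A_n^3$ is expanded as a triple sum, the indicator $\1_{\{A_n\le1\}}$ depends on the full sum and does not factor, so the product inequality $\prod_j\min(1,c_j|x|)\le\min(1,|x|(\prod c_j)^{1/m})^m$ cannot be combined with the restriction in any obvious way; and without it, $\int A_n^3\,\lambda$ diverges for $2<\alpha\beta<3$ exactly as you say. The ``case analysis according to the size of $|x|$'' you gesture at in your last paragraph is in fact the whole argument, but it should be carried out on $A_n(x,s)$ itself rather than on its multilinear expansion. The payoff of the paper's organisation is that the same lemma, with $(p,q)=(2,4)$ or $(0,4)$, is reused verbatim for $\overline\gamma_3$, $\gamma_4$ and $\gamma_5$ in the Kolmogorov bound.
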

\begin{proof}
Recalling the inequality \eqref{Dest},  
we  have that
\begin{equation}\label{dsfljsdfh}
  \gamma_3 \leq C  \int_{\R^2}  \min \left(|A_n(x,s)|^2, |A_n(x,s)|^3 \right) 
 \lambda (dx,ds).
\end{equation}
From the inequality \eqref{dsfljsdfh}, Lemma~\ref{lem:Gamma3} follows from the result of Lemma~\ref{dsfljsdfhgghd} below. 
\end{proof}

%

%
%
%
%
%
%
%

%
%
%
%
%

\begin{proof}[Proof of Theorem \ref{maintheo} for the Wasserstein distance]
The Wasserstein bound \eqref{wasserstein}  is  a direct consequence of Lemmas~\ref{lem:Gamma1}, \ref{lem:Gamma2} and \ref{lem:Gamma3} and the second-order Poincar\'e inequality \eqref{dslfjsldfh} 
\begin{equation}
 d_W\left( V_n/v_n, Z\right) \leq C(\gamma_1+\gamma_2+\gamma_3)\leq C  \begin{cases}
    n^{-1/2} \qquad \qquad & \textrm{if }\alpha\beta>3, \\ 
    n^{-1/2} \log(n) & \textrm{if }\alpha\beta=3,\\
  	  n^{(2-\alpha\beta)/2}  & \textrm{if }2<\alpha\beta<3.
  \end{cases}
\end{equation}
\end{proof}

The following bound used in the proof of Lemma~\ref{lem:Gamma3} is stated separately as a lemma, 
since we will also use it in the proof of upper bound for the Kolmogorov distance. 

\begin{lem}\label{dsfljsdfhgghd}
Let $p\in [0,2]$ and $q>2$. 
There exists a finite constant $C$ such that 
		\begin{equation}
  \int_{\R^2}  \min \left(|A_n(z)|^p, |A_n(z)|^q \right)  \lambda (dz)\leq C \begin{cases}
    n^{1-q/2} \qquad \qquad & \textrm{if }\alpha\beta>q, \\ 
    n^{1-q/2} \log(n) & \textrm{if }\alpha\beta=q,\\
  	  n^{(2-\alpha\beta)/2}  & \textrm{if }2<\alpha\beta<q.
  \end{cases}
\end{equation}
\end{lem}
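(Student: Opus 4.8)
The plan is to compute the integral by decomposing the region $\R^2$ according to the size of $A_n(z)$, and, crucially, by splitting the time-variable $s$ into the "bulk" region where $s$ lies away from the observation window $\{1,\dots,n\}$ and the "boundary" regions near the endpoints. Recall that $A_n(x,s)=\frac{C}{\sqrt n}\sum_{t=1}^n\min(1,|xg(t-s)|)$, and that by \eqref{gcond} (with $K=1$) the kernel satisfies $|g(u)|\leq u^{-\alpha}$ for $u\geq 1$ and $|g(u)|\leq u^\gamma$ for $0<u<1$, with $g\equiv 0$ on $(-\infty,0]$. Since the integrand is $\min(|A_n|^p,|A_n|^q)\leq |A_n|^p\wedge|A_n|^q$, I would first integrate out $x$: for fixed $s$, using the substitution $w=x\sup_t|g(t-s)|$ or more carefully integrating term by term, one gets $\int_\R \min(|A_n(x,s)|^p,|A_n(x,s)|^q)\,|x|^{-1-\beta}\,dx$ controlled (after using $\beta\in(0,2)$, $p\le 2$, $q>2$, and Jensen/Hölder on the sum over $t$) by something of the form $C n^{-\beta/2}\big(\sum_{t=1}^n |g(t-s)|^\beta\big)$ up to adjusting exponents — this is the step where the condition $p\le 2<q$ and $\beta<2$ guarantees convergence of the $x$-integral at both $0$ and $\infty$.

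The heart of the matter is then to bound $\int_\R \big(\tfrac1n\sum_{t=1}^n|g(t-s)|^\beta\big)^{?}\,ds$ with the right power, and this is where the three cases come from. For $s\leq 0$ (the bulk to the left), all arguments $t-s$ are $\geq t\geq 1$, so $|g(t-s)|^\beta\leq (t-s)^{-\alpha\beta}$; summing over $t$ and integrating over $s\in(-\infty,0]$ produces a convergent or divergent tail precisely according to whether $\alpha\beta$ exceeds, equals, or is below the threshold, yielding the $n^{1-q/2}$, $n^{1-q/2}\log n$, $n^{(2-\alpha\beta)/2}$ trichotomy. For $s$ inside or near $\{1,\dots,n\}$ one uses that $A_n(z)$ can be of order $1$ on a set of $s$ of length $O(n)$, but there the integrand is bounded by $|A_n|^q$ with $q>2$, and the near-singularity of $g$ at $0$ (governed by $\gamma>-1/\beta$, equivalently $\gamma\beta>-1$) is integrable; this contributes at most $Cn^{1-q/2}$, consistent with all three cases. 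One must be careful to use $q>2$ exactly where the bulk contribution near the window is estimated, and $p\leq 2$ where $A_n$ is small (large $|x|$ or $s$ far from the window), so that the respective powers of $A_n$ are the ones that make the geometric/polynomial sums converge.

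Concretely, the key steps in order: (1) integrate over $x$ first, reducing to an integral over $s$ of a quantity comparable to a power of $n^{-1/2}\sum_{t=1}^n|g(t-s)|^\beta$ — invoking Lemma~\ref{ljsdlfhslfhg}-type estimates $\rho_k\le Ck^{-\alpha\beta/2}$ and the computation $\int_\R\min(1,x^2y)|x|^{-1-\beta}dx=Cy^{\beta/2}$ already used in the proofs of Lemmas~\ref{lem:Gamma1} and~\ref{lem:Gamma2}; (2) split $s\in(-\infty,0]$, $s\in(0,n]$, $s\in(n,\infty)$; (3) on $(-\infty,0]$ and $(n,\infty)$ bound $|g(t-s)|\leq (t-s)^{-\alpha}$ and do the explicit polynomial sum-and-integral, getting the trichotomy in $\alpha\beta$ versus $q$; (4) on $(0,n]$ use the decay $(t-s)^{-\alpha}$ for $|t-s|\ge 1$ together with integrability of $u^{\gamma\beta}$ near $0$ (using $\gamma\beta>-1$) and the bound $\min(\cdot)\le |A_n|^q$ to get $Cn^{1-q/2}$; (5) add the three contributions.

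**Main obstacle.** The technically delicate point is the bookkeeping in step (1)–(2): one has to track how $\min(|A_n|^p,|A_n|^q)$ interacts with the double sum over $t$ after the $x$-integration, making sure to apply Jensen's inequality with the correct exponent so that a sum like $\big(\sum_t a_t\big)^{\beta/2}$ is dominated by $\sum_t a_t^{\beta/2}$ only when that direction is valid, and otherwise keeping the sum outside the power. The regime boundary $\alpha\beta=q$ (producing the logarithmic factor) requires the careful borderline estimate $\sum_{k=1}^n k^{-1}\asymp\log n$ after the right normalization, and one must verify that the "boundary in $s$" contributions never dominate — i.e. that they are always $O(n^{1-q/2})$, which is $\le$ all three cases since $q>2$ and $\alpha\beta>2$ forces $(2-\alpha\beta)/2<0$ but we need $n^{1-q/2}$ to be the largest only when $\alpha\beta\ge q$; checking this compatibility of exponents is the part most prone to sign errors.
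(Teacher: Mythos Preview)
Your plan has a genuine gap, and the source of the trichotomy is misidentified.

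The step (1) claim --- that integrating out $x$ reduces the problem to ``a power of $n^{-1/2}\sum_{t=1}^n|g(t-s)|^\beta$'' --- does not go through. The substitution trick $\int_\R\min(1,x^2y)|x|^{-1-\beta}\,dx=Cy^{\beta/2}$ used for $\gamma_1,\gamma_2$ applies to \emph{products} of the truncated terms $\min(1,|xg(t-s)|)$, but here you face $\min(A_n^p,A_n^q)$ with $A_n$ itself a \emph{sum} of such truncations. For $x>1$ some of the summands saturate at $1$ (those $t$ with $|g(t-s)|>x^{-1}$, roughly $x^{1/\alpha}$ of them), so the correct pointwise behaviour is $A_n(x,s)\le C x^{1/\alpha}n^{-1/2}$ for $x\ge 1$ and $s$ near the observation window --- not any expression built from $\sum_t|g(t-s)|^\beta$. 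This ``$x^{1/\alpha}$ counting'' estimate is the key ingredient, and it is absent from your outline.

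Consequently your steps (3)--(4) have the regions reversed. The paper decomposes primarily in $|x|$: on $|x|\in(0,1)$ one always has $A_n$ small and uses the power $q$, getting $O(n^{1-q/2})$ uniformly in $s$ (including $s\le 0$). The trichotomy comes from the \emph{bulk} $s\in(-n,n)$ together with the moderate range $x\in(1,n^{\alpha/2})$: there one integrates $\int_1^{n^{\alpha/2}}(x^{1/\alpha}n^{-1/2})^q x^{-1-\beta}\,dx$, and the exponent $q/\alpha-1-\beta$ being positive, zero, or negative is exactly the condition $\alpha\beta<q$, $=q$, $>q$. Your claim that $s\in(0,n]$ always gives $Cn^{1-q/2}$ would force you to use the power $q$ there for all $x$, but for $x>n^{\alpha/2}$ one has $A_n>1$ and $A_n^q$ is the \emph{larger} of the two --- using it produces a divergent contribution, so one must switch to $A_n^p$ precisely at that threshold, and this is what yields $n^{(2-\alpha\beta)/2}$ rather than $n^{1-q/2}$.
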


\begin{proof}[Proof of Lemma~\ref{dsfljsdfhgghd}]
To obtain the upper bound for the right hand side we need to decompose the integral into different parts according to whether $|x|\in(0,1)$, $|x|\in[1,n^\alpha]$ or $|x|\in(n^\alpha,\infty)$. Using the symmetry in $x$ this means that
\begin{align*}
&\int_{\R^2}  \min \left(|A_n(x,s)|^p, |A_n(x,s)|^q \right) \lambda (ds,dx)\\
&\qquad= 2\left( \int_0^1 \int_{\R} \min \left(|A_n(x,s)|^p, |A_n(x,s)|^q \right)  \lambda (ds,dx)\right. \\
&\qquad \qquad \  + \int_1^{n^{\alpha}} \int_{\R}  \min \left(|A_n(x,s)|^p, |A_n(x,s)|^q \right)  \lambda (ds,dx) \\
& \qquad \qquad \  +  \left.\int_{n^{\alpha}}^{\infty} \int_{\R}  \min \left(|A_n(x,s)|^p, |A_n(x,s)|^q \right)  \lambda (ds,dx)\right) =: I_1 + I_2 +I_3.
\end{align*}
We start by bounding the term $I_1$. For $x >0$ and $s\in [0,n]\setminus \N$ we have that
\begin{align}
\sum_{t=1}^n \min(1, |xg(t-s)|) \leq {}&   
\min(1, |xg(1+[s]-s)|)+\sum_{t=[s]+2}^n |xg(t-s)| 
\\ \leq {}& \min(1,  x (1+[s]-s)^{\gamma})+x \sum_{t=[s]+2}^n (t-s)^{-\alpha} 
\\ \leq {}& \min(1,  x (1+[s]-s)^{\gamma})+ C x=:f_1(s,x)+f_2(s,x),
\end{align}
where we used $g(u)=0$ for all $u<0$ in the first inequality and 
 assumption \eqref{gcond} on $g$ in the second inequality. The third inequality follows from the fact that 
 $\alpha>1$, which is implied by the assumptions $\alpha>2/\beta$ and $\beta<2$.   
For $\gamma<0$ we have 
\begin{align}
 \int_0^n |f_1(s,x)|^q\,ds = {}& n \int_0^1  |\min(1,x s^\gamma) |^q \,ds 
  \\ = {}&  n \Big( x^q \int_{x^{-1/\gamma}}^1  s^{q\gamma} \,ds+ \int_0^{x^{-1/\gamma}} 1 \,ds\Big) 
  \leq C n x^{-1/\gamma},  
\end{align}
where the first equality follows by substitution.
For $\gamma\geq 0$, we have the simple estimate $\int_0^n |f_1(s,x)|^q ds\leq C n x^q$. 
Similarly, we have that $\int_0^n |f_2(s,x)|^q\,ds\leq C nx^q $.   By combining the above estimates  we obtain that 
\begin{align}
  & \int_{0}^{1} x^{-1-\beta} \Big( \int_{0}^{n} |A_n(x,s)|^q\, ds\Big) \, dx \\ 
  &\qquad  \leq Cn^{1-q/2}\int_0^1 x^{-1-\beta} (x^{-1/\gamma}\1_{\{\gamma<0\}} +x^q)\,dx\leq C n^{1-q/2},\label{Psdlhjfsdlkhf}
\end{align}
where the last inequality follows from the assumption $\gamma>-1/\beta$.

For  $s \in (-\infty,0)$ we  use the assumption \eqref{gcond} on $g$ to  obtain
\begin{equation}\label{sdfljsdfh}
  \sum_{t=1}^n \min(1, |xg(t-s)|) \leq C x \sum_{t=1}^n (t-s)^{-\alpha} 
  \leq Cx \Big( (1-s)^{1-\alpha} -(n-s)^{1-\alpha}\Big).
\end{equation}
For $\alpha>1+1/q$ we have that 
\begin{equation}
  \int_{-\infty}^0 |(1-s)^{1-\alpha}-(n-s)^{1-\alpha}|^q\,ds\leq  \int_{-\infty}^0 (1-s)^{q(1-\alpha)}\,ds\leq C, 
\end{equation}
and for $\alpha<1+1/q$ we have that 
\begin{align}
 &  \int_{-\infty}^0 |(1-s)^{1-\alpha}-(n-s)^{1-\alpha}|^q\,ds \leq \int_1^\infty |u^{1-\alpha} -(u+n)^{1-\alpha}|^q\,du\\ 
& \qquad   = n^{q(1-\alpha)+1}\int_{n^{-1}}^\infty | v^{1-\alpha} -(v+1)^{1-\alpha}|^q\,dv 
\\ & \qquad \leq  C n^{q(1-\alpha)+1} \Big( \int_{n^{-1}}^1 v^{q(1-\alpha)}\,dv+ \int_1^\infty v^{-q\alpha}\,dv\Big) 
 \leq C n^{q(1-\alpha)+1},\label{sldfjlsdjflshdd}
\end{align}
where we used $1<\alpha<1+1/q$ in the last inequality.  The above estimates imply for  $\alpha \neq 1+1/q$  that 
\begin{align}\label{sdlfjsdljfs}
 &  \int_{0}^{1} x^{-1-\beta} \Big( \int_{-\infty}^{0}  |A_n(x,s)|^q \,ds\Big) \, dx 
 \\ & \qquad \leq Cn^{-q/2} \int_{0}^{1} x^{q-1-\beta} \,dx 
 \int_{-\infty}^0 |(1-s)^{1-\alpha}-(n-s)^{1-\alpha}|^q\,ds \\ & \qquad  \leq C(n^{-q/2}+n^{1-q\alpha+q/2})\leq C n^{1-q/2},\label{sdlfjsdlj}
\end{align}
where the last inequality follows since $\alpha>1$. For $\alpha=1+1/q$, assumption \eqref{gcond} is satisfied for $\tilde \alpha=\alpha-\epsilon$ for all $\epsilon>0$ small enough.   
Hence, by \eqref{sldfjlsdjflshdd} used with $\tilde \alpha$ we obtain that \eqref{sdlfjsdljfs} is  bounded by $C n^{1-q/2}$ by choosing $\epsilon$ small enough.  

The assumption that $g(x)=0$ for all $x<0$, implies that  $A_n(x,s)=0$ for all $s>n$, and hence \eqref{Psdlhjfsdlkhf} and \eqref{sdlfjsdlj}   show that 
%
\begin{align} \label{i1}
I_1 \leq C n^{1-q/2}.
\end{align} 
Next, we treat the term $I_3$.  For the integral
$$
 \int_{n^{\alpha}}^{\infty}  \int_{-\infty}^{-n} \min \left(|A_n(x,s)|^p, |A_n(x,s)|^q \right)  \lambda (ds, dx)
$$
 we need to distinguish different cases, namely  $s\leq -x^{1/\alpha}$,   $-x^{1/\alpha}<s\leq n-x^{1/\alpha}-1$ and $n-x^{1/\alpha}-1<s\leq n$. 
 
We start with the case $s\leq -x^{1/\alpha}$. Note that 
\begin{align} \label{Anxest}
|A_n(x,s)| \leq xn^{-1/2} \sum_{t=1}^n |g(t-s)| \leq C x\sqrt{n} (-s)^{-\alpha} 
\end{align}
and observe that $x\sqrt{n} (-s)^{-\alpha} >1$ if and only if $s>-x^{1/\alpha}n^{1/(2\alpha)}$. 
We obtain the inequality
\begin{align}
{}&   \int^{-x^{1/\alpha}}_{-x^{1/\alpha}n^{1/(2\alpha)}} (-s)^{-\alpha p}\,ds \leq Cx^{(1-\alpha p)/\alpha}\left(1+n^{(1-\alpha p)/(2\alpha)} \right). 
\end{align}
On the other hand, we have that 
\begin{align}
{}&   \int_{-\infty}^{-x^{1/\alpha}n^{1/(2\alpha)}} (-s)^{-\alpha q}\,ds \leq C x^{(1-\alpha q)/\alpha}n^{(1-\alpha q)/(2\alpha)}. 
\end{align}
By \eqref{Anxest} we thus conclude that
\begin{align}
{}&   \int_{n^{\alpha}}^{\infty}  \int_{-\infty}^{-x^{1/\alpha}} \min \left(|A_n(x,s)|^p, |A_n(x,s)|^q \right)  \lambda (ds, dx)\\ 
{}& \leq C \int_{n^{\alpha}}^{\infty} x^{-1-\beta} \left(
 x^pn^{p/2}\int^{-x^{1/\alpha}}_{-x^{1/\alpha}n^{1/(2\alpha)}} (-s)^{-\alpha p}\,ds +  x^qn^{q/2} \int_{-\infty}^{-x^{1/\alpha}n^{1/(2\alpha)}} (-s)^{-\alpha q}\,ds
 \right) \,dx \\
{}& \leq C\left(n^{1-\alpha \beta +1/(2\alpha)} + n^{1-\alpha \beta +p/2} \right). 
\end{align}
%
For $x>1$ and  $-x^{1/\alpha}<s\leq n-x^{1/\alpha}-1$ we obtain
\begin{align}
 {}&  \sum_{t=1}^n \min(1, |xg(t-s)|) = \sum_{t=1}^{[s+x^{1/\alpha}]} 1 + 
\sum_{t=[s+x^{1/\alpha}]+1}^n x(t-s)^{-\alpha}
 \\{}&  \qquad \leq C\Big(   (s+x^{1/\alpha}) +  x\Big( (x^{1/\alpha})^{1-\alpha}    -(n-s)^{1-\alpha}\Big) \Big). \label{sdlfjsdsdfsdf}
\end{align}
The substitution $v=x^{-1/\alpha}(n-s)$ yields
\begin{align}
{}&   \int_{-x^{1/\alpha}}^{n-x^{1/\alpha}-1} \Big|x\Big(  (x^{1/\alpha})^{1-\alpha}    -(n-s)^{1-\alpha}\Big)\Big|^p\,ds 
  \\  {}& \qquad = 
     x^{(p+1)/\alpha}\int_{1+x^{-1/\alpha}}^{1+nx^{-1/\alpha}} | 1    -v^{1-\alpha}|^2\,dv
  \leq C n x^{p/\alpha}, \label{sldfjlsdjhhs}
\intertext{and} 
{}&   \int_{-x^{1/\alpha}}^{n-x^{1/\alpha}-1} | s+x^{1/\alpha}|^p\,ds \leq \int_0^n u^p\,du = \frac{1}{p+1} n^{p+1}. \label{sdlfhjsdgf}
\end{align}
From \eqref{sdlfjsdsdfsdf}, \eqref{sldfjlsdjhhs} and \eqref{sdlfhjsdgf} we  deduce that
\begin{align}
 {}&  \int_{n^{\alpha}}^{\infty} x^{-1-\beta} \Big(\int_{-x^{1/\alpha}}^{n-x^{1/\alpha}-1}   |A_n(x,s)|^p \,ds\Big)\, dx 
  \\  {}& \qquad  \leq C n^{-p/2}\Big( n^{p+1} \int_{n^\alpha}^\infty x^{-1-\beta}\,dx+n\int_{n^\alpha}^\infty x^{-1-\beta+p/\alpha}\,dx\Big) 
  \\ {}& \qquad \leq  C n^{1-\alpha \beta +p/2}, 
\end{align}
where we used the assumption $\alpha \beta>2$ in the second  inequality.

Finally, for the last case $n-x^{1/\alpha}-1<s\leq n$ we have
\[
\sum_{t=1}^n \min(1, |xg(t-s)|) \leq  n,
\]
which leads to
\begin{equation}
\int_{n^{\alpha}}^{\infty} x^{-1-\beta} \Big(\int_{n-x^{1/\alpha}-1}^{n}   |A_n(x,s)|^p \,ds\Big)\, dx 
 \leq C n^{p/2} \int_{n^{\alpha}}^{\infty} x^{-1-\beta+1/\alpha} \, dx \leq Cn^{1-\alpha \beta +p/2}. 
 \end{equation}
Summarizing, we arrive at the bound
\begin{align} \label{i3}
I_3 \leq C\left(n^{1-\alpha \beta +1/(2\alpha)} + n^{1-\alpha \beta +p/2} \right).
\end{align}

Next, we will bound the term  $I_2$   as follows: 
\begin{align}
  I_2\leq C\Big\{ {}&  \int_1^{n^{\alpha}} x^{-1-\beta}\Big(\int_{-n}^n   \min \left(|A_n(x,s)|^p, |A_n(x,s)|^q \right) \,ds\Big)\,dx 
 \\ {}& +\int_1^{n^{\alpha}} x^{-1-\beta}\Big(\int_{-\infty}^{-n}   \min \left(|A_n(x,s)|^p, |A_n(x,s)|^q \right) \,ds\Big)\,dx \Big\}=J_1+J_2, 
\end{align}
where we recall that $A_n(x,s)=0$ for $s>n$. 
To estimate $J_1$ we have for  $s\in \R$ and  $x>1$ that 
 \begin{align}
 {}&  \sum_{t=1}^n \min(1, |xg(t-s)|) \leq  \sum_{t=[s]+1}^{[s+x^{1/\alpha}]} 1 + 
\sum_{t=[s+x^{1/\alpha}]+1}^n x(t-s)^{-\alpha}
 \\{}&  \qquad \leq C
 \begin{cases}
 	x^{1/\alpha} +x\Big( (x^{1/\alpha})^{1-\alpha}    -(n-s)^{1-\alpha}\Big) \qquad \qquad &  \textrm{if }s+x^{1/\alpha}\leq n, \\
 n-s    & \textrm{if }s+x^{1/\alpha}> n,
 \end{cases}
 \\{}&  \qquad \leq C x^{1/\alpha}.\label{lksjdflhsdklfh}
 \end{align}
We note  that $x^{1/\alpha}n^{-1/2}\leq 1$ if and only if $x\leq n^{\alpha/2}$, and write   $J_1$ as $J_1=J_1'+J_1''$. Note that
\begin{align}
 J_1':={}& \int_1^{n^{\alpha/2}} x^{-1-\beta}\Big(\int_{-n}^n   \min \left(|A_n(x,s)|^p, |A_n(x,s)|^q \right) \,ds\Big)\,dx 
 \\ \leq {}& C
\int_1^{n^{\alpha/2}} \Big(\int_{-n}^{n}   |x^{1/\alpha}n^{-1/2}|^q\,ds \Big)x^{-1-\beta}\,dx
 \leq n^{1-q/2}\int_1^{n^{\alpha/2}}   x^{-1-\beta+q/\alpha} \,dx
 \\  \leq C {}&\begin{cases}
    n^{1-q/2} \qquad \qquad & \textrm{if }\alpha\beta>q \\ 
    n^{1-q/2} \log(n) & \textrm{if }\alpha\beta=q\\
  	  n^{(2-\alpha\beta)/2}  & \textrm{if }2<\alpha\beta<q, 
  \end{cases}
 \label{dsfljsdlhf}
\end{align}
where we have used \eqref{lksjdflhsdklfh} in the first inequality. 
Furthermore, 
\begin{align}
  J_1'':={}&\int_{n^{\alpha/2}}^{n^{\alpha}} x^{-1-\beta}\Big(\int_{-n}^n   \min \left(|A_n(x,s)|^p, |A_n(x,s)|^q \right) \,ds\Big)\,dx 
\\  \leq {}& C
\int_{n^{\alpha/2}}^{n^\alpha} \Big(\int_{-n}^{n}   |x^{1/\alpha}n^{-1/2}|^p\,ds \Big)x^{-1-\beta}\,dx
 \leq C n^{1-p/2} \int_{n^{\alpha/2}}^{n^{\alpha}}   x^{-1-\beta+p/\alpha} \,dx\\
  = {}& C n^{1-\alpha \beta +p/2} \int_{n^{-1/2}}^1 v^{p-1-\alpha\beta}\,dv  
  \leq  C n^{(2-\alpha\beta)/2},\label{lsjdfljsdhf}
\end{align}
where we have applied \eqref{lksjdflhsdklfh} in the first inequality, and the substitution  $v=n^{-1} x^{1/\alpha}$ in the second equality. 

To  estimate $J_2$, again we need to distinguish several cases. We recall the inequality \eqref{Anxest} and the statement below it, and notice that $-x^{1/\alpha}n^{1/(2\alpha)}>-n$ if and only if $x<n^{\alpha -1/2}$. We obtain the estimate 
\begin{align}
{}&   \int_1^{n^{\alpha-1/2}} x^{-1-\beta}\Big(\int_{-\infty}^{-n}   \min \left(|A_n(x,s)|^p, |A_n(x,s)|^q \right) \,ds\Big)\,dx  \\ 
{}&  \qquad  \leq C n^{1-q\alpha +q/2} \int_1^{n^{\alpha-1/2}} x^{-1-\beta+q} \,dx \leq C n^{1-\alpha \beta +\beta/2}. 
\end{align}
Recalling again the inequality \eqref{Anxest} we deduce that
\begin{align}
{}&   \int_{n^{\alpha-1/2}}^{n^\alpha} x^{-1-\beta}\Big(\int_{-\infty}^{-x^{1/\alpha} n^{1/(2\alpha)}}   
\min \left(|A_n(x,s)|^p, |A_n(x,s)|^q \right) \,ds\Big)\,dx  \\ 
{}&  \qquad  \leq C n^{1/(2\alpha)} \int_{n^{\alpha-1/2}}^{n^\alpha} x^{-1-\beta+1/\alpha} \,dx \leq C n^{1-\alpha \beta +\beta/2}. 
\end{align}
Finally, we also get for $p\not = \beta$ that 
\begin{align} \label{pbeta}
{}&   \int_{n^{\alpha-1/2}}^{n^\alpha} x^{-1-\beta}\Big(\int_{-x^{1/\alpha} n^{1/(2\alpha)}}^{-n}    
\min \left(|A_n(x,s)|^p, |A_n(x,s)|^q \right) \,ds\Big)\,dx  \\ 
{}&  \qquad  \leq C n^{1-\alpha p +p/2} \int_{n^{\alpha-1/2}}^{n^\alpha} x^{-1-\beta+p} \,dx \leq C 
\left( n^{1-\alpha \beta +\beta/2} + n^{1-\alpha \beta +p/2} \right). 
\end{align}
Next, we summarise our findings. Since $\alpha>1$ we have that $1-\alpha\beta + \beta/2 < (2-\alpha\beta)/2$. On the other hand,
$\alpha \beta>2\geq p$ implies the inequality $1-\alpha\beta + p/2 < (2-\alpha\beta)/2$ (when $p=\beta$ 
an additional $\log n$ factor appears in \eqref{pbeta}, but both rates are still dominated by $n^{(2-\alpha \beta)/2}$). 
Thus, we conclude from \eqref{dsfljsdlhf} and
\eqref{lsjdfljsdhf}  that 
\begin{align}
 I_2 \leq C {}&\begin{cases}
    n^{1-q/2} \qquad \qquad & \textrm{if }\alpha\beta>q, \\ 
    n^{1-q/2} \log(n) & \textrm{if }\alpha\beta=q,\\
  	  n^{(2-\alpha\beta)/2}  & \textrm{if }2<\alpha\beta<q.
  \end{cases}
 \label{I2esti}
\end{align}
Due to \eqref{i1} and  \eqref{i3} we obtain the desired assertion since $1-\alpha\beta + 1/(2\alpha) < (2-\alpha\beta)/2$. 
\end{proof}

\subsection{Bounding the Kolmogorov distance}

We let $\gamma_1, \gamma_2,\overline \gamma_3, \gamma_4, \gamma_5$
 and $\gamma_6$ be as defined in \eqref{sldfjlsdjfljs}, \eqref{ljsdfghksdhf}  and \eqref{sldfjsldjhf}--\eqref{sldfjsdgf} with $F=V_n$. 
 By our  second-order Poincar\'e inequality  Theorem~\ref{Kdistance}, and using the fact that $v_n$ is bounded away from zero we have  that 
\begin{equation}\label{sdlfjsdkgfsd}
 d_K\left( V_n/v_n, Z\right)\leq C (\gamma_1+\gamma_2+\overline{\gamma}_3 + \gamma_4+\gamma_5+\gamma_6).
\end{equation}
In this subsection we will  bound the terms $\overline{\gamma}_3$, $\gamma_4$, $\gamma_5$ and $\gamma_6$ to obtain the 
bound for the Kolmogorov distance in Theorem~\ref{maintheo}. Throughout the proof we consider a continuously differentiable function  $\varphi:\R\to\R$ with bounded derivative, whose support $\text{supp}(\varphi)$ is contained in the interval $[-2,2]$, which satisfies $\varphi(x)=1$ for $x\in [-1,1]$ and is such that $\| \varphi \|_{\infty}=1$. In particular, this ensures that $\| \varphi' \|_{\infty}<\infty$.  We start with the term $\overline{\gamma}_3$, which we  handle  as $\gamma_3$ in Lemma~\ref{lem:Gamma3}.

\begin{lem}\label{lem:Gamma34}
There exists a constant $C>0$ such that 
\begin{equation}\label{sdlfjsdlfj}
  \overline{\gamma}_3 \leq C  \begin{cases}
    n^{-1} \qquad \qquad & \textrm{if }\alpha\beta>4 \\ 
    n^{-1} \log(n) & \textrm{if }\alpha\beta=4\\
  	  n^{(2-\alpha\beta)/2}  & \textrm{if }2<\alpha\beta<4.
  \end{cases}
  \end{equation}
\end{lem}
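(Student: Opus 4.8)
The plan is to reduce the bound on $\overline{\gamma}_3$ to the master estimate of Lemma~\ref{dsfljsdfhgghd}, applied with $p=2$ and $q=4$. Recall from \eqref{sldfjsldjhf} that
\[
\overline{\gamma}_3 = 2 \int \E[(D_zV_n)^2 (1-\varphi(D_zV_n))^2]^{1/2} \, \E[|D_zV_n|^2]^{1/2} \, \lambda(dz),
\]
and that by the deterministic estimate \eqref{Dest} we have $|D_zV_n| \leq A_n(z)$ for every $z\in\R^2$, so in particular $\E[|D_zV_n|^2]^{1/2} \leq A_n(z)$ and likewise $\E[(D_zV_n)^2(1-\varphi(D_zV_n))^2]^{1/2} \leq A_n(z)$.

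The crucial point is the choice of the cut-off: since $\varphi \equiv 1$ on $[-1,1]$, the factor $1-\varphi(D_zV_n)$ vanishes on the event $\{|D_zV_n| \leq 1\}$, and because $|D_zV_n|\leq A_n(z)$ with $A_n(z)$ deterministic, the whole integrand vanishes whenever $A_n(z)\leq 1$. On the complementary region $\{A_n(z)>1\}$ I bound both square roots crudely by $A_n(z)$, obtaining the integrand $\leq A_n(z)^2$. Since $\min(A_n(z)^2, A_n(z)^4)$ equals $A_n(z)^2$ on $\{A_n(z)>1\}$ and equals $A_n(z)^4\geq 0$ on $\{A_n(z)\leq 1\}$, this yields the pointwise bound
\[
\E[(D_zV_n)^2(1-\varphi(D_zV_n))^2]^{1/2}\,\E[|D_zV_n|^2]^{1/2} \leq \min(|A_n(z)|^2, |A_n(z)|^4),
\]
and therefore $\overline{\gamma}_3 \leq 2\int_{\R^2} \min(|A_n(z)|^2, |A_n(z)|^4)\,\lambda(dz)$.

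It then only remains to apply Lemma~\ref{dsfljsdfhgghd} with $p=2\in[0,2]$ and $q=4>2$, which gives $n^{1-q/2}=n^{-1}$ when $\alpha\beta>4$, an extra $\log(n)$ factor when $\alpha\beta=4$, and $n^{(2-\alpha\beta)/2}$ when $2<\alpha\beta<4$ --- exactly \eqref{sdlfjsdlfj}. I do not expect a genuine obstacle in this lemma; the only subtlety worth spelling out is that one loses a power of $A_n(z)$ by estimating $\E[(D_zV_n)^2(1-\varphi(D_zV_n))^2]$ by $\E[(D_zV_n)^2]$ rather than by $\E[(D_zV_n)^4]$, but this loss is exactly compensated by the restriction to $\{A_n(z)>1\}$ forced by the cut-off, where the fourth power is the larger of the two --- this is precisely the mechanism anticipated in Remark~\ref{rem3}.
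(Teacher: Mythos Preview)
Your proof is correct and follows essentially the same route as the paper: bound $(1-\varphi(D_zV_n))$ using that $\varphi\equiv 1$ on $[-1,1]$ together with the deterministic estimate $|D_zV_n|\leq A_n(z)$ to reduce to $\int A_n(z)^2\,\1_{\{A_n(z)>1\}}\,\lambda(dz)\leq \int \min(A_n(z)^2,A_n(z)^4)\,\lambda(dz)$, and then invoke Lemma~\ref{dsfljsdfhgghd} with $p=2$, $q=4$. Your exposition of the cut-off mechanism is in fact slightly more explicit than the paper's, but the argument is identical.
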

\begin{proof}
Applying  the inequality \eqref{Dest} we conclude that
\begin{align}
  \overline{\gamma}_3 \leq  {}& C \int_{\R^2} \E[ (D_z V_n)^2\1_{\{|D_z V_n|>1\}} ]^{1/2} \E[(D_z V_n)^2]^{1/2}\,\lambda(dz) 
  \\  \leq {}& C \int_{\R^2} A_n(x,s)^2 1_{\{|A_n(x,s)|>1\}}\, \lambda(dx,ds) \\  \leq {}& C \int_{\R^2}  \min \left(A_n(x,s)^2, A_n(x,s)^4 \right) 
 \lambda (dx,ds),
\end{align}
which together with Lemma~\ref{dsfljsdfhgghd} implies \eqref{sdlfjsdlfj}. 
%
\end{proof}

\begin{lem}\label{sdlfjsdlfjhgsa}
There exists a finite constant $C$ such that 
		\begin{align}\label{sfsdfhs-1}
	{}& \int A_n(z)^2\,\lambda(dz) \leq C, 
\intertext{and}
\label{sfsdfhs-2}
  {}& 
	\int A_n(z)^4\,\lambda(dz)\leq C
\begin{cases}
 	n^{-1} & \text{if }  \alpha\beta >4, \\ 
 	n^{-1}(\log(n))^3  & \text{if } \alpha\beta=4,\\
 	 	n^{2-\frac{3}{4}\alpha\beta}  & \text{if } 2<\alpha\beta<4. 
 	 	 \end{cases}
\end{align}
\end{lem}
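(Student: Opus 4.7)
The plan is to prove both bounds by a common Fubini-type argument: expand the appropriate power of $A_n(z)$ as a sum over time indices, integrate out the jump-size variable $x$ against the heavy-tailed density $|x|^{-1-\beta}$ using a scaling substitution, then integrate out the spatial variable $s$ against the kernel $g$ via Lemma~\ref{ljsdlfhslfhg}. The key elementary input is the pointwise inequality
$$\prod_{i=1}^k \min(1,a_i) \leq \min\Big(1,\prod_{i=1}^k a_i\Big),\qquad a_i\geq 0,$$
which is immediate from $\min(1,a)\leq a$ and $\min(1,a)\leq 1$.

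For \eqref{sfsdfhs-1}, I would expand
$$A_n(z)^2 = \frac{C}{n}\sum_{t_1,t_2=1}^n \min(1,|xg(t_1-s)|)\min(1,|xg(t_2-s)|)$$
and use the above inequality with $k=2$ to bound the summand by $\min(1, x^2|g(t_1-s)g(t_2-s)|)$. The substitution $u = x\sqrt{|g(t_1-s)g(t_2-s)|}$ then gives
$$\int_\R \min(1,x^2|y_1y_2|)\,|x|^{-1-\beta}\,dx \;\leq\; C|y_1y_2|^{\beta/2},$$
where the constant is finite since $\int\min(1,u^2)|u|^{-1-\beta}\,du<\infty$ for $\beta\in(0,2)$. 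Integrating in $s$ produces $\rho_{t_1-t_2}$, so that
$$\int A_n(z)^2\,\lambda(dz) \;\leq\; \frac{C}{n}\sum_{t_1,t_2=1}^n \rho_{t_1-t_2} \;\leq\; C\sum_{k\in\Z}\rho_k \;<\;\infty,$$
the last being summable by Lemma~\ref{ljsdlfhslfhg} since $\alpha\beta>2$.

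For \eqref{sfsdfhs-2}, I would proceed in the same way with $k=4$ factors, bounding the $x$-integrand by $\min(1, x^4\prod_i|g(t_i-s)|)$ and using the substitution $u = x(\prod_i|g(t_i-s)|)^{1/4}$ to obtain
$$\int_\R \min\!\Big(1,x^4\prod_{i=1}^4|g(t_i-s)|\Big)|x|^{-1-\beta}\,dx \;\leq\; C\prod_{i=1}^4|g(t_i-s)|^{\beta/4},$$
finite since $\int\min(1,u^4)|u|^{-1-\beta}\,du<\infty$ for $\beta\in(0,4)$. The second estimate of Lemma~\ref{ljsdlfhslfhg} then gives
$$\int_\R\prod_{i=1}^4 |g(t_i-s)|^{\beta/4}\,ds \;\leq\; C\prod_{i=2}^4|t_i-t_1|^{-\alpha\beta/4}.$$
Summing over $t_2,t_3,t_4\in\{1,\ldots,n\}$ with $t_1$ fixed factorizes into $S_n^3$, where $S_n:=\sum_{|k|\leq n}(|k|\vee 1)^{-\alpha\beta/4}$. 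Standard series estimates give $S_n \leq C$ when $\alpha\beta>4$, $S_n\leq C\log n$ when $\alpha\beta=4$, and $S_n\leq Cn^{1-\alpha\beta/4}$ when $2<\alpha\beta<4$. Summing over the remaining index $t_1$ contributes a factor $n$, which combined with the $1/n^2$ prefactor yields the three rates stated in \eqref{sfsdfhs-2}.

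The argument is essentially bookkeeping once the scaling substitution and the bounds from Lemma~\ref{ljsdlfhslfhg} are set up; no serious obstacle is anticipated. It is worth noting that the resulting bound on $\int A_n^4\,d\lambda$ is strictly weaker than the corresponding bound in Lemma~\ref{dsfljsdfhgghd} for $\min(A_n^2,A_n^4)$ when $2<\alpha\beta<4$, which is inevitable since $A_n(z)$ is not truncated here and the large-value regime contributes more than under the $\min$-truncation used in Lemma~\ref{dsfljsdfhgghd}.
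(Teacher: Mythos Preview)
Your proposal is correct and follows essentially the same route as the paper's own proof: expand the power of $A_n$, bound the product of minima by $\min(1,x^k\prod_i|g(t_i-s)|)$, integrate out $x$ via a scaling substitution to obtain $(\prod_i|g(t_i-s)|)^{\beta/k}$, and then apply Lemma~\ref{ljsdlfhslfhg} together with the summation estimate $\sum_{r=1}^n r^{-\alpha\beta/4}$. Your concluding remark comparing the untruncated bound here with the $\min$-truncated bound of Lemma~\ref{dsfljsdfhgghd} is a nice observation that the paper does not make explicit.
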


\begin{proof}[Proof of Lemma~\ref{sdlfjsdlfjhgsa}] 
To show \eqref{sfsdfhs-1} we proceed as follows:
\begin{align}
  {}&  \int A_n(z)^2\,\lambda(dz) \\
{}&  \qquad = \frac{1}{n} \sum_{t_1,t_2=1}^n \int \Big( \int \min(1,|x g(t_1-s)|)\min(1,|xg(t_2-s)|) |x|^{-1-\beta}\,dx\Big) \,ds\\ 
{}& \qquad \leq  \frac{1}{n} \sum_{t_1,t_2,=1}^n \int \Big( \int \min(1,|x^2 g(t_1-s)g(t_2-s)|) |x|^{-1-\beta}\,dx\Big) \,ds
\\ {}& \qquad \leq \frac{C}{n} \sum_{t_1,t_2,=1}^n \int |g(t_1-s)g(t_2-s)|^{\beta/2} \,ds =\frac{C}{n} \sum_{t_1,t_2,=1}^n \rho_{t_1-t_2} \\ {}& \qquad \leq C \sum_{t=0}^n \rho_t \leq C \sum_{t=1}^n t^{-\alpha\beta/2} \leq C, \label{sdlfjsldfj}
\end{align}
where the second inequality follows from the substitution $u=x^2 g(t_1-s) g(t_2-s)$, and the last inequality is a consequence of the assumption that $\alpha\beta>2$.

Our proof of \eqref{sfsdfhs-2} relies on the estimate
		\begin{equation}
	\int A_n(z)^4\,\lambda(dz)\leq  C 
  \frac{1}{n^2} \sum_{t_1,\dots, t_4=1}^n \int_\R \Big( \int_\R \prod_{i=1}^4 \min\{1,|xg(t_i-s)|\}\, |x|^{-1-\beta}\,dx\Big) \,ds. \label{sldjflsdjfl}
\end{equation}
Moreover, we have
\begin{align}
  {}& \int_\R \prod_{i=1}^4 \min\{1,|xg(t_i-s)|\}\, |x|^{-1-\beta}\,dx\leq \int_\R \min\Big\{1,x^4 \prod_{i=1}^4 |g(t_i-s)|\Big\}|x|^{-1-\beta}\,dx \\ {}& \qquad = \frac{1}{4}
  \Big( \prod_{i=1}^4 |g(t_i-s)|\Big)^{\beta/4} \int_\R \min\{1,|u|\} |u|^{-1-\beta/4}\,du\leq C  \Big( \prod_{i=1}^4 |g(t_i-s)|\Big)^{\beta/4}, \qquad \label{sljdflsjdfl}
\end{align}
where the equality follows by the substitution $u=x^4 \prod_{i=1}^4 |g(t_i-s)|$. 
From the two estimates \eqref{sldjflsdjfl} and \eqref{sljdflsjdfl} we obtain  
\begin{align}
{}&  	\int A_n(z)^4\,\lambda(dz)\leq C \frac{1}{n^2} \sum_{t_1,\dots, t_4=1}^n \int_\R \Big( \prod_{i=1}^4 |g(t_i-s)|\Big)^{\beta/4}\,ds \\
{}& \qquad  \leq \frac{C}{n}\Big(\sum_{r=1}^n r^{-\alpha\beta/4}\Big)^3 \leq C 
\begin{cases}
 	n^{-1} & \text{if }  \alpha\beta >4, \\ 
 	n^{-1}(\log(n))^3  & \text{if } \alpha\beta=4,\\
 	 	n^{2-\frac{3}{4}\alpha\beta}  & \text{if } 2<\alpha\beta<4, 
 \end{cases}
\end{align}
where the second inequality follows by \eqref{dslfjdsf}. This completes the proof of the lemma. 
\end{proof}

\begin{lem}\label{ljsdlfjlsdh}
There exists a constant $C$ such that 
\begin{equation}\label{ljsdlfhsdf}
  \gamma_4\leq C 
  \begin{cases}
     n^{-1}  & \text{if } \alpha\beta>4, \\
     n^{-1} \log(n)\qquad   & \text{if } \alpha\beta=4,  \\
     n^{(2-\alpha\beta)/2}  & \text{if } 8/3\leq \alpha\beta<4,\\
  	n^{3- \frac{5}{4} \alpha\beta} & \text{if } 2<\alpha\beta<8/3.
  	
  \end{cases}
\end{equation}
\end{lem}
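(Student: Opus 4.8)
The plan is to dominate $\gamma_4$ by a deterministic integral of the kernel $A_n$ introduced in \eqref{Dest}, after first controlling the prefactor $(\E[V_n^4])^{1/4}$, and then to apply Lemmas~\ref{dsfljsdfhgghd} and \ref{sdlfjsdlfjhgsa}.

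First I would establish a moment bound for $V_n$. Writing $\tilde f=f-\E[f(X_1)]$ and expanding $\E[V_n^4]=n^{-2}\sum_{t_1,\dots,t_4=1}^n\E\bigl[\prod_{i=1}^4\tilde f(X_{t_i})\bigr]$, the two--pairs part of the expansion equals $3v_n^4+o(1)$, while the genuinely fourth--order part is treated exactly as in the proof of Lemma~\ref{sdfljsdlfj}: using the covariance identity of \cite{Last-Penrose-rep} and the deterministic estimate \eqref{sdlfjsdlkhf}, it reduces to a multiple sum over the quantities $\rho_k$ and over the integral in \eqref{dslfjdsf}, whose total is $O(1)$ for $\alpha\beta>4$ and of order $n^{\,2-\frac34\alpha\beta}$ (up to a logarithmic factor at $\alpha\beta=4$) for $2<\alpha\beta<4$, by Lemma~\ref{ljsdlfhslfhg}. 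Consequently $(\E[V_n^4])^{1/4}\le C$ for $\alpha\beta\ge 8/3$, and $(\E[V_n^4])^{1/4}\le C\,n^{\frac12-\frac{3}{16}\alpha\beta}$ for $2<\alpha\beta<8/3$.

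Next I would reduce the integral defining $\gamma_4$. By \eqref{Dest}, $\E[|D_zV_n|^4]^{1/4}\le A_n(z)$. For the factor carrying the truncation I would use that $\text{supp}(\varphi)\subseteq[-2,2]$, so that on $\{\varphi(D_zV_n)\neq0\}$ one has $|D_zV_n|\le2$; combining this with the deterministic bound $|D_zV_n|\le A_n(z)$ gives $(D_zV_n)^4\varphi(D_zV_n)^2\le\min\bigl((D_zV_n)^4,16\bigr)\le\min\bigl(A_n(z)^4,16\bigr)$, whence $\E[(D_zV_n)^4\varphi(D_zV_n)^2]^{1/2}\le\min(A_n(z)^2,4)$. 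Since $A_n$ is deterministic, this yields
\[
\gamma_4\le C\bigl(1+(\E[V_n^4])^{1/4}\bigr)\int_{\R^2}\min\bigl(A_n(z)^2,4\bigr)\,A_n(z)\,\lambda(dz).
\]
The remaining integral I would estimate by decomposing the range of $|x|$ into $(0,1)$, $[1,n^\alpha]$ and $(n^\alpha,\infty)$ and proceeding within each range as in the proof of Lemma~\ref{dsfljsdfhgghd}, splitting moreover according to whether $A_n(z)$ exceeds $1$. On $\{A_n\ge1\}$ the integrand is at most $4A_n^2$, which is dominated by the integrand of $\int\min(A_n^2,A_n^4)\,\lambda$ and hence, by Lemma~\ref{dsfljsdfhgghd} with $p=2$ and $q=4$, contributes at the rates $n^{-1}$, $n^{-1}\log n$ and $n^{(2-\alpha\beta)/2}$ according as $\alpha\beta>4$, $\alpha\beta=4$, or $2<\alpha\beta<4$. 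On $\{A_n<1\}$, where the truncation is inactive, the integrand is $A_n^3$; in the ranges $|x|\ge1$ this is handled as in the treatment of $I_2$ and $I_3$ in the proof of Lemma~\ref{dsfljsdfhgghd} (using $A_n^3\le A_n^2$ there), and in the interior range $|x|<1$ by a direct computation patterned on \eqref{Psdlhjfsdlkhf}. Multiplying the resulting bound by $(\E[V_n^4])^{1/4}$ from the first step gives the four stated regimes, the growth of $\E[V_n^4]$ in the range $2<\alpha\beta<8/3$ being responsible for the change of exponent there.

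The step I expect to be the main obstacle is the interior range $|x|<1$ (equivalently the event $\{A_n(z)<1\}$): there the truncation $\varphi$ gives nothing, and the crude pointwise bound only yields the integrand $A_n(z)^3$, which integrates to order $n^{-1/2}$ — too large to produce the desired $n^{-1}$ (resp.\ $n^{(2-\alpha\beta)/2}$) rate when $\alpha\beta>4$ (resp.\ $3<\alpha\beta<4$). Obtaining the sharp rate there requires a finer, $\int A_n^4$--type control of $\E[(D_zV_n)^4]$ on $\{A_n<1\}$, exploiting the (near-)decorrelation of the summands of $V_n$ over well-separated indices so as to gain the extra power of $A_n$ lost by the generalized Hölder bound; carrying this out uniformly over all the $|x|$--ranges, in the manner of Lemma~\ref{dsfljsdfhgghd}, is the technical heart of the argument.
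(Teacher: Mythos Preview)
Your overall strategy matches the paper's: bound the integral via the deterministic estimate \eqref{Dest} and the support condition on $\varphi$, then feed the result into Lemma~\ref{dsfljsdfhgghd}; separately control $(\E[V_n^4])^{1/4}$. One difference in execution is that the paper does not expand $\E[V_n^4]$ directly as you do. Instead it invokes a general fourth-moment inequality for Poisson functionals, \cite[Lemma~4.2]{LPS}, which gives
\[
\E[V_n^4]\le C\max\Big\{\int(\E[(D_zV_n)^4])^{1/2}\,\lambda(dz),\ \int\E[(D_zV_n)^4]\,\lambda(dz),\ 1\Big\},
\]
and then uses $|D_zV_n|\le A_n(z)$ together with Lemma~\ref{sdlfjsdlfjhgsa} to read off the same dichotomy you state: $(\E[V_n^4])^{1/4}\le C$ for $\alpha\beta\ge 8/3$ and $(\E[V_n^4])^{1/4}\le Cn^{(2-\frac34\alpha\beta)/4}$ otherwise. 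This is considerably shorter than your cumulant-type expansion.

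On the integral, the obstacle you flag is genuine. The paper's proof asserts that the integrand is bounded by $\min(A_n(z)^2,A_n(z)^4)$ and then applies Lemma~\ref{dsfljsdfhgghd} with $q=4$. But the only inputs used there are $|D_zV_n|\le A_n(z)$ and the support of $\varphi$, and from these one gets
\[
\E[(D_zV_n)^4\varphi(D_zV_n)^2]^{1/2}\,\E[(D_zV_n)^4]^{1/4}\ \le\ \min(A_n(z),A_n(z)^2)\cdot A_n(z)\ =\ \min(A_n(z)^2,A_n(z)^3),
\]
precisely what you obtain; on $\{A_n(z)<1\}$ this is $A_n(z)^3$, not $A_n(z)^4$. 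No decorrelation argument of the kind you describe is carried out in the paper, so the passage to $A_n^4$ appears to be a slip. With the honest bound one applies Lemma~\ref{dsfljsdfhgghd} with $p=2$, $q=3$, obtaining $n^{-1/2}$ for $\alpha\beta>3$, $n^{-1/2}\log n$ at $\alpha\beta=3$, and $n^{(2-\alpha\beta)/2}$ for $2<\alpha\beta<3$. After multiplying by the fourth-moment prefactor, the resulting $\gamma_4$ bound is weaker than the one stated in the lemma for $\alpha\beta>3$, but it is still dominated by the $\gamma_5$ bound of Lemma~\ref{lem:Gamma5} in every regime, and for $2<\alpha\beta<8/3$ the paper bypasses $\gamma_4$ entirely via $d_K\le\sqrt{d_W}$. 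So you should not invest effort in the ``sharper $\int A_n^4$-type control'' you anticipate: record the $q=3$ bound and move on---Theorem~\ref{maintheo} is unaffected.
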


\begin{proof}
By our choice of the function $\varphi$ we have that 
	\begin{equation}\label{sdfsdf}
 \gamma_4\leq C\left(  (\E V_n^4)^{1/4} + 1\right)
\int \E[(D_zV_n)^4  \1_{\{|D_z V_n|\leq 1\}}]^{1/2}  \E[ |D_z V_n|^4]^{1/4}  \, \lambda(dz).
\end{equation}
The  inequality $x^4 1_{\{|x| \leq 1 \}} \leq \min (x^4,x^2)$ implies that 
\begin{align}
 {}&  \int \E[|D_zV_n|^4  \1_{\{|D_z V_n|\leq 1\}}]^{1/2}  \E[ |D_z V_n|^4]^{1/4}  \, \lambda(dz)
 \\ 
  {}& \qquad \leq \int \E[\min(|D_z V_n|^2, |D_zV_n|^4)]^{1/2}  \E[ |D_z V_n|^4]^{1/4}  \, \lambda(dz)
 \\ {}&  \qquad \leq \int \min(A_n(z)^2,A_n(z)^4)  \, \lambda(dz) \leq C \begin{cases}
    n^{-1} \qquad \qquad & \textrm{if }\alpha\beta>4, \\ 
    n^{-1} \log(n) & \textrm{if }\alpha\beta=4,\\
  	  n^{(2-\alpha\beta)/2}  & \textrm{if }2<\alpha\beta<4,
  \end{cases}
  \label{dsfljsdlfj}
\end{align}
where the last inequality follows by Lemma~\ref{dsfljsdfhgghd}.

 Lemma~4.2 of \cite{LPS} shows  that 
	\begin{equation}\label{sdlfjsdf}
  \E[V_n^4]\leq C \max\Big\{\int (\E[ (D_z V_n)^4])^{1/2}\lambda(dz), \int \E[(D_z V_n)^4]\,\lambda(dz), 1\Big\}. 
\end{equation}
Hence, a combination of \eqref{sdlfjsdf}, the inequality $|D_z V_n|\leq A_n(z)$, cf.\ \eqref{Dest}, and \eqref{sfsdfhs-1}--\eqref{sfsdfhs-2} of Lemma~\ref{sdlfjsdlfjhgsa} implies that
\begin{equation}\label{sdfljsdlfj-sdfs}
  (\E[ V_n^4])^{1/4}\leq C 
  \begin{cases}
   1 \qquad  &  \text{if }\alpha\beta \geq 8/3, \\
   n^{(2-\frac{3}{4}\alpha\beta)/4}\qquad  & \text{if } 2<\alpha\beta <8/3. 	
  \end{cases}
\end{equation}
 The two inequalities \eqref{dsfljsdlfj}  and \eqref{sdfljsdlfj-sdfs} 
yield  the bound \eqref{ljsdlfhsdf}, which completes the proof of the lemma. 
 \end{proof}

In the next step we treat the term $\gamma_5$.

\begin{lem}\label{lem:Gamma5}
There exists a constant $C>0$ such that 
\begin{equation}\label{sdlfjsdhhs}
  \gamma_5\leq C  \begin{cases}
    n^{-1/2} \qquad \qquad & \textrm{if }\alpha\beta>4 \\ 
    n^{-1/2} \log(n)^{1/2} & \textrm{if }\alpha\beta=4\\
  	  n^{(2-\alpha\beta)/4}  & \textrm{if }2<\alpha\beta<4
  \end{cases}
\end{equation}
\end{lem}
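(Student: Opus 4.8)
The plan is to follow the same route used for $\overline{\gamma}_3$ in Lemma~\ref{lem:Gamma34}, reducing the estimate to an application of Lemma~\ref{dsfljsdfhgghd}. Recall that, with $F=V_n$,
\[
\gamma_5^2=\int_{\R^2} \E\big[\varphi(D_zV_n)^2(D_zV_n)^4\big]^{1/2}\,\E\big[(D_zV_n)^4\big]^{1/2}\,\lambda(dz),
\]
and that \eqref{Dest} provides the \emph{deterministic} bound $|D_zV_n|\le A_n(z)$. The first step is to bound the two factors of the integrand separately. For the second factor this is immediate: $\E[(D_zV_n)^4]^{1/2}\le A_n(z)^2$.

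The crucial observation concerns the first factor. Since $\varphi$ is supported in $[-2,2]$ and $\|\varphi\|_\infty=1$, we have the pointwise estimate $\varphi(D_zV_n)^2(D_zV_n)^4\le (D_zV_n)^4\1_{\{|D_zV_n|\le 2\}}\le \min(A_n(z)^4,16)$, and hence $\E[\varphi(D_zV_n)^2(D_zV_n)^4]^{1/2}\le \min(A_n(z)^2,4)$. It is exactly the compact support of $\varphi$ that prevents this factor from growing when $A_n(z)$ is large; without it one would only obtain $\min(A_n(z)^4,A_n(z)^3)$ after multiplying the two factors, and the large-value exponent $3$ falls outside the admissible range $[0,2]$ of Lemma~\ref{dsfljsdfhgghd}. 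Multiplying the two bounds and using the elementary inequality $\min(a^2,4)\,a^2\le 16\min(a^4,a^2)$, valid for all $a\ge 0$, we arrive at
\[
\gamma_5^2\le C\int_{\R^2}\min\big(A_n(z)^4,A_n(z)^2\big)\,\lambda(dz).
\]

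It then remains to invoke Lemma~\ref{dsfljsdfhgghd} with $p=2$ and $q=4$ (both admissible, since $2\in[0,2]$ and $4>2$, and the standing hypotheses $\gamma>-1/\beta$ and $\alpha\beta>2$ are in force). This yields $\gamma_5^2\le Cn^{-1}$ for $\alpha\beta>4$, $\gamma_5^2\le Cn^{-1}\log n$ for $\alpha\beta=4$, and $\gamma_5^2\le Cn^{(2-\alpha\beta)/2}$ for $2<\alpha\beta<4$; taking square roots gives \eqref{sdlfjsdhhs}. I expect no genuine obstacle beyond the observation about the compact support of $\varphi$ in the second paragraph; once the integrand is reduced to $\min(A_n^4,A_n^2)$, the remaining bookkeeping is identical to that in Lemmas~\ref{lem:Gamma34} and \ref{ljsdlfjlsdh}.
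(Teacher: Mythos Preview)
Your proof is correct and follows essentially the same route as the paper: both arguments use the compact support of $\varphi$ together with the deterministic bound \eqref{Dest} to reduce the integrand to $C\min(A_n(z)^4,A_n(z)^2)$, and then invoke Lemma~\ref{dsfljsdfhgghd} with $p=2$, $q=4$. The only differences are cosmetic (the paper writes the cutoff at $1$ rather than $2$ and absorbs the constants slightly differently), so there is nothing to add.
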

\begin{proof}
We use the inequality $x^4 1_{\{|x| \leq 1 \}} \leq \min (x^4,1)$ to obtain the upper bound
\begin{align}
	\gamma_5^2 \leq {}& \int \E[ (D_z V_n)^4\1_{\{|D_z V_n|\leq 1\}}]^{1/2} \E[(D_zV_n)^4]^{1/2} \, \lambda(dz)\\  \leq {}& \int \min\left(A_n(z)^4, A_n(z)^2 \right) \lambda(dz). \label{lsjdfljsd}
\end{align}
Lemma~\ref{dsfljsdfhgghd} completes the proof. 
\end{proof}

Finally, we derive an upper bound for the term $\gamma_6$.

\begin{lem}\label{lem:Gamma6}
There exists a constant $C>0$ such that 
\begin{equation} \label{slfjsdlfj}
\gamma_{6} \leq C
\begin{cases}
n^{-1/2} & \text{if } \alpha\beta>4, \\
n^{-1/2}\log(n)\qquad  & \text{if } \alpha\beta=4, \\
n^{(2-  \alpha\beta)/4}  & \text{if } \frac{8}{3}\leq \alpha\beta<4,\\
n^{(3- \frac{5}{4} \alpha\beta)/2}  & \text{if } 2<\alpha\beta<\frac{8}{3}.
\end{cases}
\end{equation}
\end{lem}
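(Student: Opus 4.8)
The plan is to dominate each of the three families of integrands in the definition \eqref{sldfjsdgf} of $\gamma_6^2$ (with $F=V_n$) by a deterministic expression built from $A_n(z)$ and $A_n(z_1,z_2)$ of \eqref{Dest}--\eqref{D2est}, and then to integrate these over $\lambda^2$ by the techniques already used for $\gamma_1,\gamma_2,\gamma_3$ together with Lemma~\ref{dsfljsdfhgghd}. The starting point is that $\varphi(D_{z_1}V_n)D_{z_1}V_n=\psi(D_{z_1}V_n)$ for $\psi(y):=y\varphi(y)$, and by our choice of $\varphi$ the function $\psi$ is bounded (by $2$) and globally Lipschitz with $\|\psi'\|_\infty\leq 1+2\|\varphi'\|_\infty<\infty$. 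Since $D_{z_2}(\psi(D_{z_1}V_n))=\psi(D_{z_1}V_n+D^2_{z_1,z_2}V_n)-\psi(D_{z_1}V_n)$ and $|D^2_{z_1,z_2}V_n|\leq A_n(z_1,z_2)$, this gives $|D_{z_2}(\varphi(D_{z_1}V_n)D_{z_1}V_n)|\leq\min(4,\|\psi'\|_\infty A_n(z_1,z_2))\leq C\min(1,A_n(z_1,z_2))$, hence $\E[(D_{z_2}(\varphi(D_{z_1}V_n)D_{z_1}V_n))^4]^{1/2}\leq C\min(1,A_n(z_1,z_2)^2)$; the same estimate follows from the Leibniz inequality $|D_{z_2}(GH)|\leq|HD_{z_2}G|+|GD_{z_2}H|+|D_{z_2}GD_{z_2}H|$ together with $|D_{z_2}\varphi(D_{z_1}V_n)|\leq\|\varphi'\|_\infty A_n(z_1,z_2)$. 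Since $\varphi$ vanishes outside $[-2,2]$ we also have $\varphi(D_{z_1}V_n)^2(D_{z_1}V_n)^4\leq\min((D_{z_1}V_n)^4,16)$, so $\E[\varphi(D_{z_1}V_n)^2(D_{z_1}V_n)^4]^{1/2}\leq C\min(1,A_n(z_1)^2)$, while $\E[|D_{z_1}V_n|^4]^{1/2}\leq A_n(z_1)^2$ and $\E[|D^2_{z_2,z_1}V_n|^4]^{1/2}\leq A_n(z_1,z_2)^2$. Plugging these into \eqref{sldfjsdgf} and using $A_n(z_1,z_2)\leq C\min(A_n(z_1),A_n(z_2))$ yields
\[
\gamma_6^2\leq C\int\!\!\int\Big(A_n(z_1)^2A_n(z_1,z_2)^2+\min(1,A_n(z_1)^2)\,A_n(z_1,z_2)^2+A_n(z_1,z_2)^4\Big)\,\lambda^2(d(z_1,z_2)).
\]

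The three double integrals are then handled separately. For the first two, the key reduction is the one-dimensional estimate $\int A_n(z_1,z_2)^2\,\lambda(dz_2)\leq Cn^{-1/2}A_n(z_1)$, obtained by expanding $A_n(z_1,z_2)^2$ as a double sum over the time index, performing each $z_2$-integral as in the proof of Lemma~\ref{sdlfjsdlfjhgsa} (which produces a factor $\rho_{t-t'}$), and using $\sum_k\rho_k<\infty$ from Lemma~\ref{ljsdlfhslfhg}. Applying this after Fubini, the second double integral is at most $Cn^{-1/2}\int\min(A_n(z_1),A_n(z_1)^3)\,\lambda(dz_1)$, which is Lemma~\ref{dsfljsdfhgghd} with $p=1$, $q=3$; hence it is of order $n^{-1}$ (up to a logarithmic factor at $\alpha\beta=3$) for $\alpha\beta\geq3$ and of order $n^{(1-\alpha\beta)/2}$ for $2<\alpha\beta<3$. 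The first double integral is at most $Cn^{-1/2}\int A_n(z_1)^3\,\lambda(dz_1)$; here $A_n^3$ is a pure power, not of the form $\min(A_n^p,A_n^q)$, so Lemma~\ref{dsfljsdfhgghd} does not apply verbatim, but $\int A_n(z)^3\,\lambda(dz)$ is estimated by repeating the region-by-region analysis in the proof of that lemma (splitting $|x|\in(0,1]$, $[1,n^\alpha]$, $(n^\alpha,\infty)$ and $s$ into the matching ranges), which yields $\int A_n^3\,\lambda\leq Cn^{-1/2}$ (up to a log at $\alpha\beta=3$) for $\alpha\beta\geq3$ and $\int A_n^3\,\lambda\leq Cn^{5/2-\alpha\beta}$ for $2<\alpha\beta<3$, so that the first double integral is of order $n^{-1}$ for $\alpha\beta\geq3$ and of order $n^{2-\alpha\beta}$ for $2<\alpha\beta<3$. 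Finally, writing $A_n(z_1,z_2)=Cn^{-1/2}B_n(z_1,z_2)$ with $B_n(z_1,z_2)=\sum_{t=1}^n\min(1,|x_1g(t-s_1)|)\min(1,|x_2g(t-s_2)|)$, the third double integral is at most $Cn^{-2}\int\!\int B_n^4\,\lambda^2$; expanding the fourth power, integrating each $x_i$-variable (which produces $(\prod_{i=1}^4|g(t_i-s_j)|)^{\beta/4}$ for $j=1,2$), then using \eqref{dslfjdsf} and $\sum_k|k|^{-\alpha\beta/2}<\infty$, one obtains $\int\!\int B_n^4\,\lambda^2\leq Cn$, so the third double integral is of order $n^{-1}$. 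Collecting these estimates and noting that $2<\alpha\beta$ makes each of $n^{2-\alpha\beta}$, $n^{(1-\alpha\beta)/2}$ and $n^{-1}$ dominated, after a square root, by the right-hand side of \eqref{slfjsdlfj}, completes the proof.

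The main obstacle is the estimate of $\int A_n(z)^3\,\lambda(dz)$ entering the first double integral. Because $\varphi$ only controls $|D_{z_1}V_n|$ and not the deterministic bound $A_n(z_1)$, one cannot decouple the two Poisson variables via $A_n(z_1,z_2)^2\leq CA_n(z_1)A_n(z_2)$: the resulting $\int A_n(z_2)\,\lambda(dz_2)$ diverges whenever $\beta\geq1$. Keeping the pair $A_n(z_1,z_2)^2$ intact and integrating in $z_2$ first is therefore necessary, but this leaves a pure cubic power of $A_n$, forcing the same delicate case analysis in $(x,s)$ as in the proof of Lemma~\ref{dsfljsdfhgghd}, now with exponent $q=3$ and with no lower-order truncation available in the regimes where $A_n$ exceeds $1$. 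A secondary point requiring care throughout is to ensure that in each $x_i$-integration every variable $x_i$ enters with at least a quadratic power of $\min(1,|x_ig(\cdot)|)$, since this is what makes the $x$-integrals (such as $\int\min(1,x^2y)|x|^{-1-\beta}\,dx=Cy^{\beta/2}$) converge for $\beta\in(0,2)$.
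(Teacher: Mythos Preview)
Your argument is correct and in fact yields a bound that is uniformly at least as good as the one stated in \eqref{slfjsdlfj} (and strictly better in the range $2<\alpha\beta<4$). However, your route differs from the paper's in several respects.

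The paper decomposes $\gamma_6^2=3(\gamma_{6.1}^2+\gamma_{6.2}^2+\gamma_{6.3}^2)$, bounds $\gamma_{6.2}^2$ and $\gamma_{6.3}^2$ directly by the auxiliary quantity
\[
M=\int\!\!\int A_n(z_1)^2A_n(z_1,z_2)^2\,\lambda^2(d(z_1,z_2)),
\]
and estimates $M$ by fully expanding the product into a four-fold sum and then applying \eqref{dslfjdsf} of Lemma~\ref{ljsdlfhslfhg}, obtaining $M\le Cn^{-1}(\sum_{r\le n} r^{-\alpha\beta/4})^2(\sum_{r\le n} r^{-3\alpha\beta/4})$. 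For $\gamma_{6.1}^2$ the paper does \emph{not} use the Lipschitz bound on $\psi(y)=y\varphi(y)$; instead it splits $D_{z_2}\varphi(D_{z_1}V_n)$ according to $\{|D_{z_1}V_n|\le3\}$ versus $\{|D_{z_1}V_n|>3\}$ and bounds $\gamma_{6.1}^2\le C\big(M+\int A_n(z)^4\,\lambda(dz)\cdot\int\1_{\{A_n(z)>1\}}\,\lambda(dz)\big)$, feeding in Lemma~\ref{sdlfjsdlfjhgsa} for the first factor and Lemma~\ref{dsfljsdfhgghd} (with $p=0,q=4$) for the second. It is the product of these two factors that produces the exponent $3-\tfrac54\alpha\beta$ in the regime $2<\alpha\beta<8/3$.

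You instead exploit boundedness and Lipschitz continuity of $\psi$ from the outset, which immediately gives $|D_{z_2}\psi(D_{z_1}V_n)|\le C\min(1,A_n(z_1,z_2))$ and reduces all three pieces to integrals of $A_n$ and $A_n(\cdot,\cdot)$. Your key new device is the pointwise reduction $\int A_n(z_1,z_2)^2\,\lambda(dz_2)\le Cn^{-1/2}A_n(z_1)$ (obtained by bounding one $\min$-factor by $1$ and using $\sum_k\rho_k<\infty$), which converts the first two double integrals into single integrals of $A_n^3$ and $\min(A_n,A_n^3)$. The price is that $\int A_n^3\,\lambda$ falls outside the $\min(A_n^p,A_n^q)$ form of Lemma~\ref{dsfljsdfhgghd}, so you must redo its region-by-region analysis with a pure cubic; this works (all integrals are finite because $\alpha\beta>1$ controls the large-$x$ tail, and the dominant contribution is $n^{5/2-\alpha\beta}$ from the region where $A_n$ is not truncated). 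Your third integral $\int\!\int A_n(z_1,z_2)^4\,\lambda^2$ is handled by the clean factorisation via \eqref{dslfjdsf}, giving $Cn^{-1}$.

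In summary, the paper's approach is more modular and re-uses Lemmas~\ref{dsfljsdfhgghd} and~\ref{sdlfjsdlfjhgsa} directly, at the cost of the extra exponent $3-\tfrac54\alpha\beta$. Your approach is more direct and yields the sharper bound $\gamma_6\le Cn^{(2-\alpha\beta)/2}$ for $2<\alpha\beta<3$ and $\gamma_6\le Cn^{-1/2}$ for $\alpha\beta>3$, but requires an ad hoc recomputation of $\int A_n^3\,\lambda$ that is not covered by the existing lemmas.
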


\begin{proof}
Since the quantity $\gamma_6^2$ consists of three terms, we use the decomposition
\[
\gamma_6^2 =: 3\left(\gamma_{6.1}^2 + \gamma_{6.2}^2 +\gamma_{6.3}^2\right),
\]  
with the terms $\gamma_{6.1}$, $\gamma_{6.2}$ and $\gamma_{6.3}$ given by
\begin{align*}
\gamma_{6.1}^2 &:= \int   \E[ (D_{z_2}(\varphi(D_{z_1}V_n) D_{z_1}V_n))^4]^{1/2} \E[|D_{z_1} V_n|^4]^{1/2}\,\lambda^{ 2}(d(z_1,z_2)),\\
\gamma_{6.2}^2 &:= \int \E[\varphi(D_{z_1}V_n)^2 (D_{z_1}V_n)^4 ]^{1/2} \E[|D^2_{z_2,z_1} V_n|^4]^{1/2}\,\lambda^{ 2}(d(z_1,z_2)),\\
\gamma_{6.3}^2 &:= \int \E[ (D_{z_2}(\varphi(D_{z_1}V_n) D_{z_1}V_n))^4]^{1/2} \E[|D^2_{z_2,z_1} V_n|^4]^{1/2} \,\lambda^{ 2}(d(z_1,z_2)).
\end{align*}
We first prove the inequality 
\begin{equation} \label{neben}
\begin{split}
M &:=\int   \E[(D_{z_1}V_n)^4 ]^{1/2} \E[(D^2_{z_2,z_1} 
V_n)^4]^{1/2} \,\lambda^{ 2}(d(z_1,z_2)) \\
&\leq C
\begin{cases}
n^{-1} & \text{if } \alpha\beta>4, \\
n^{-1} (\log(n))^2 \qquad & \text{if } \alpha\beta=4, \\
n^{(2-\alpha\beta)/2} & \text{if } 2<\alpha\beta <4.
\end{cases}
\end{split}
\end{equation}
Indeed, by applying the estimates \eqref{Dest} and \eqref{D2est}, we conclude that 
\begin{align} \label{Mest}
M\leq \int   A_n(z_1)^2 A_n(z_1, z_2)^2\,  \lambda^{ 2}(d(z_1,z_2)).
\end{align}
Following the same arguments as in the proofs of Lemma \ref{lem:Gamma1} and Lemma \ref{lem:Gamma2}, we deduce the inequality
\begin{align*} 
  M\leq {}&\frac{C}{n^2} \sum_{t_1,\dots,t_4=1}^n \Big[\int \Big(\int\min\Big(1, x^4\prod_{i=1}^4 |g(t_i-s)|\Big) |x|^{-1-\beta}\,dx\Big)\,ds \\ {}& 
\phantom{ \frac{C}{n^2} \sum_{t_1,\dots,t_4=1}^n \Big[}
 \times \int \Big(  \int \min\Big(1,x^2 |g(t_3-s)g(t_4-s)|\Big)|x|^{-1-\beta}\,dx\Big)\,ds\Big]
 \\   \leq {}& \frac{C}{n^2}  \sum_{t_1,\ldots, t_4=1}^n \Big[ \int_{\R} |g(t_1-s) g(t_2-s) g(t_3-s) g(t_4-s)|^{\beta/4} \,ds  \\[1.5 ex]
{}& \phantom{\frac{C}{n^2}  \sum_{t_1,\ldots, t_4=1}^n \Big[}
\times\int_{\R} |g(t_3-s) g(t_4-s)|^{\beta/2}\, ds\Big].
\end{align*} 
Hence, by Lemma~\ref{ljsdlfhslfhg} we have that 
\begin{align}
	M\leq \frac{C}{n} \Big(\sum_{r=1}^n r^{-\alpha\beta/4}\Big)^2 \Big(\sum_{r=1}^n r^{-\frac{3}{4}\alpha\beta}\Big) 
	\leq C 
	\begin{cases}
		n^{-1} & \text{if } \alpha\beta>4,\\
		n^{-1}(\log(n))^2 & \text{if }\alpha\beta =4, \\ 
		n^{1-\alpha\beta/2}   & \text{if }2<\alpha\beta <4, 
			\end{cases}
\end{align}
which shows  \eqref{neben}. 

Now, we start deriving the bounds for the terms $\gamma_{6.1}^2$, $\gamma_{6.2}^2$ and $\gamma_{6.3}^2$. 
First, we consider the quantity  $\gamma_{6.1}^2$. 
 We have that
\begin{align}
	{}& |D_{z_2}(\varphi(D_{z_1}V_n))|= 
\left| \varphi(D_{z_1}V_n + D^2_{z_1, z_2}V_n) - \varphi(D_{z_1}V_n ) \right|\\
{}& \qquad \leq \1_{\{|D_{z_1}V_n|\leq 3\}} C | D^2_{z_1, z_2}V_n|+ \1_{\{|D_{z_1}V_n| > 3\}}C  \1_{\{|D_{z_1,z_2}^2V_n|> 1\}}, 
\label{sljsdfhsdlfhggsg}
\end{align}
where the inequality follows by using the mean-value theorem and the fact that $\varphi'$ is bounded for the first term, and for the second term we use that  $\varphi$ is bounded and  has support in $[-2,2]$. 

By \eqref{sljsdfhsdlfhggsg} we obtain the decomposition
\begin{align}
 \gamma_{6.1}^2 \leq {}&  C \Big( 
\int  \E[ (\1_{\{|D_{z_1}V_n|\leq 3\}}  | D^2_{z_1, z_2}V_n|D_{z_1}V_n)^4]^{1/2} \E[|D_{z_1} V_n|^4]^{1/2} \, \lambda^{ 2}(d(z_1,z_2)) \\ 
{}& \phantom{C \Big( }+ \int  \E[ (  \1_{\{|D_{z_1}V_n| > 3\}} \1_{\{|D_{z_1,z_2}^2V_n|> 1\}}D_{z_1}V_n)^4]^{1/2} \E[|D_{z_1} V_n|^4]^{1/2} \, \lambda^{ 2}(d(z_1,z_2)) \Big) 
\\ \leq {}& C \Big( M + \int |A_n(z)|^4\,\lambda(dz)\times \int \1_{\{|A_n(z)|>1\}}\,\lambda(dz)\Big), 
\label{sdflhsgdfgsdf}
%
%
\end{align}
where  the second inequality follows by the two estimates $|D^2_{z_1,z_2} V_n|\leq  A_n(z_2)$, cf.\  \eqref{D2est} and the line following it, and $|D_{z_1} V_n| \leq  A_n(z_1)$, cf.\  \eqref{Dest}.  Lemma~\ref{sdlfjsdlfjhgsa} shows  that 
\begin{equation}\label{sljdfsldfj}
  \int  A_n(z)^4\, \lambda(dz) \leq  C 
\begin{cases}
 	n^{-1} & \text{if }  \alpha\beta >4, \\ 
 	n^{-1}(\log(n))^3  & \text{if } \alpha\beta=4, \\
 	 n^{2-\frac{3}{4}\alpha\beta}  & \text{if } 2<\alpha\beta<4, 
 \end{cases}
 \end{equation}
 and  Lemma~\ref{dsfljsdfhgghd} implies that 
 \begin{align}\label{sdlfjsdhgfss}
  \int     1_{\{A_n(z)>1\}}\, \lambda(dz)\leq   {}& 
\int_{\R^2}  \min \left(1, A_n(x,s)^4 \right) 
 \lambda (dx,ds)
\\  
 \leq {}& C  \begin{cases}
    n^{-1} \qquad \qquad & \textrm{if }\alpha\beta>4, \\ 
    n^{-1} \log(n) & \textrm{if }\alpha\beta=4,\\
  	  n^{(2-\alpha\beta)/2}  & \textrm{if }2<\alpha\beta<4.
  \end{cases}
\end{align}
Hence from \eqref{sdflhsgdfgsdf}, \eqref{sljdfsldfj}, \eqref{sdlfjsdhgfss} and \eqref{neben}  we deduce the inequality 
\begin{align} \label{gamma6.1}
\gamma_{6.1}^2 \leq C
\begin{cases}
n^{-1} & \text{if } \alpha\beta>4, \\
n^{-1} (\log(n))^2\qquad  & \text{if } \alpha\beta=4, \\
n^{3-\frac{5}{4}\alpha\beta} & \text{if } 2<\alpha\beta<4.
\end{cases}
\end{align}

Since $\varphi$ is bounded we have that 
\begin{align}
  \label{sdfljsdgs}
\gamma_{6.2}^2 = {}& \int  \E[\varphi(D_{z_1}V_n)^2 (D_{z_1}V_n)^4 ]^{1/2} \E[|D^2_{z_2,z_1} V_n|^4]^{1/2} \, \lambda^{ 2}(d(z_1,z_2))\leq C M,
\intertext{and}
\gamma_{6.3}^2 ={}&  
\int  \E[ (D_{z_2}(\varphi(D_{z_1}V_n) D_{z_1}V_n))^4]^{1/2} \E[|D^2_{z_2,z_1} V_n|^4]^{1/2}\, \lambda^{ 2}(d(z_1,z_2))
\\[1.5 ex]
 \leq{}&  C \int \E[ (D_{z_1}V_n)^4]^{1/2} \E[|D^2_{z_2,z_1} V_n|^4]^{1/2}\, \lambda^{ 2}(d(z_1,z_2))=C M. \label{sdlfjsgdfgs}
\end{align}

The inequalities \eqref{gamma6.1}, \eqref{sdfljsdgs}, \eqref{sdlfjsgdfgs} and  \eqref{neben} 
now imply \eqref{slfjsdlfj}, and the proof of the lemma is complete.  
\end{proof}

\begin{proof}[Proof of Theorem \ref{maintheo} for the Kolmogorov distance]
We now combine the statements of our second-order Poincar\'e inequality \eqref{sdlfjsdkgfsd}, and  Lemmas~\ref{lem:Gamma1}, \ref{lem:Gamma2}, \ref{lem:Gamma34}, \ref{lem:Gamma5} and \ref{lem:Gamma6}. For $\alpha\beta \geq 8/3$ we have the  inequality 
\begin{align*} 
d_K(V_n/v_n,Z)\leq {}& C(\gamma_1+\gamma_2+\overline{\gamma}_3 + \gamma_4+\gamma_5+\gamma_6)  
\\ \leq {}& C
\begin{cases}
n^{-1/2} & \text{if } \alpha\beta>4, \\
n^{-1/2} \log(n) \qquad  & \text{if } \alpha\beta=4, \\
n^{(2-\alpha\beta)/4} & \text{if } \frac{8}{3}\leq \alpha\beta <4.
\end{cases}
\end{align*}
On the other hand, for $2<\alpha\beta<  8/3$ we will use the bound 
\[
d_K\left( V_n /v_n, Z\right) \leq  \sqrt{d_W\left( V_n /v_n, Z\right)} 
\leq C n^{(2-\alpha\beta)/4}.
\]
Thus, we obtain the assertion of Theorem \ref{maintheo} for the Kolmogorov distance. 
\end{proof}

All our corollaries, i.e.,  Corollaries~\ref{sdlfjsdsdfsdflfj}--\ref{lsjdflhsdgdg}, follow directly from Theorem~\ref{maintheo}. 

\begin{center}
\large{\textbf{Acknowledgements}}
\end{center}
  Andreas Basse-O'Connor's research was supported by the grant DFF-4002-00003 from the Danish Council for Independent Research. Mark Podolskij gratefully acknowledges financial support through the research project  ``Ambit fields: probabilistic properties and statistical inference'' funded by Villum Fonden.

\bibliographystyle{chicago}

\end{document}